\documentclass[12pt,reqno]{amsart}
\usepackage{amsmath,amssymb,amsthm,mathtools}
\mathtoolsset{showonlyrefs=true}
\usepackage{mathrsfs}
\usepackage{bm}
\usepackage{enumerate}
\numberwithin{equation}{section}
\allowdisplaybreaks[2]
\usepackage{xcolor}
\usepackage{amscd}
\usepackage{aliascnt}
\theoremstyle{plain}
\newtheorem{theorem}{Theorem}[section]
\newtheorem*{maintheorem}{Main theorem}
\newaliascnt{proposition}{theorem}
\newtheorem{proposition}[proposition]{Proposition}
\aliascntresetthe{proposition}
\newaliascnt{lemma}{theorem}
\newtheorem{lemma}[lemma]{Lemma}
\aliascntresetthe{lemma}
\newaliascnt{corollary}{theorem}

\aliascntresetthe{corollary}
\theoremstyle{definition}
\newaliascnt{definition}{theorem}
\newtheorem{definition}[definition]{Definition}
\aliascntresetthe{definition}
\newaliascnt{remark}{theorem}
\newtheorem{remark}[remark]{Remark}
\aliascntresetthe{remark}
\usepackage[
  unicode=true,
  colorlinks=true,
  linkcolor=blue,
  citecolor=blue,
  urlcolor=blue
]{hyperref}
\usepackage[nameinlink,noabbrev]{cleveref}
\crefname{theorem}{Theorem}{Theorems}
\Crefname{theorem}{Theorem}{Theorems}
\crefname{proposition}{Proposition}{Propositions}
\Crefname{proposition}{Proposition}{Propositions}
\crefname{lemma}{Lemma}{Lemmas}
\Crefname{lemma}{Lemma}{Lemmas}
\crefname{corollary}{Corollary}{Corollaries}
\Crefname{corollary}{Corollary}{Corollaries}
\crefname{definition}{Definition}{Definitions}
\Crefname{definition}{Definition}{Definitions}
\crefname{remark}{Remark}{Remarks}
\Crefname{remark}{Remark}{Remarks}
\crefname{section}{\S}{\S\S}
\Crefname{section}{\S}{\S\S}
\crefname{subsection}{\S}{\S\S}
\Crefname{subsection}{\S}{\S\S}
\crefname{subsubsection}{\S}{\S\S}
\Crefname{subsubsection}{\S}{\S\S}
\newcommand{\C}{\mathbb C}
\newcommand{\N}{\mathbb N}
\newcommand{\Q}{\mathbb Q}
\newcommand{\R}{\mathbb R}
\newcommand{\Z}{\mathbb Z}
\DeclareMathOperator{\Act}{Act}
\DeclareMathOperator{\Ann}{Ann}
\DeclareMathOperator{\Coeff}{Coeff}
\DeclareMathOperator{\Hom}{Hom}
\DeclareMathOperator{\Ker}{Ker}
\DeclareMathOperator{\Mat}{Mat}
\DeclareMathOperator{\MinNS}{MinNS}
\DeclareMathOperator{\NS}{NS}
\DeclareMathOperator{\Rec}{Rec}
\DeclareMathOperator{\Rexp}{Rexp}
\DeclareMathOperator{\Ser}{Ser}
\DeclareMathOperator{\Sol}{Sol}
\DeclareMathOperator{\Supp}{Supp}

\DeclareMathOperator{\lexp}{lexp}
\DeclareMathOperator{\length}{length}
\DeclareMathOperator{\nsupp}{nsupp}
\DeclareMathOperator{\rank}{rank}

\DeclareMathOperator{\supp}{supp}
\title[Coefficient Modules over Euler Fibers]{Finite Directional Coefficient Modules over Euler Fibers of $A$-Hypergeometric Systems}
\author[Go Okuyama]{Go Okuyama}
\address{Higher Education Support Center, Hokkaido University of Science, Sapporo 006-8585, Japan}
\email{gokuyama@hus.ac.jp}
\subjclass[2020]{Primary 33C70; Secondary 13N10, 14M25, 16S32}
\keywords{$A$-hypergeometric systems; logarithmic series; Euler fibers; exponent lattices; negative supports; finite-length modules; Gale duality; Macaulay inverse systems}
\begin{document}
\begin{abstract}
Let $A\in\Mat_{d\times n}(\Z)$ have rank $d$, let $\bm{\beta}\in\C^{d}$, and consider the $A$-hypergeometric system defined by the ideal $H_{A}(\bm{\beta})$, with relation lattice $L=\Ker_{\Z}(A)$.
No homogeneity, pointedness, or positivity assumption is imposed on $A$.
We study directed formal logarithmic solutions simultaneously over all exponent lattices in the Euler fiber $A_{\C}^{-1}(\bm{\beta})$, which form an uncountable family when $L$ has positive rank.

For a fixed $L$-generic direction, negative-support strata are described by rational sign polyhedra, and their directional boundedness is characterized by recession cones.
On each exponent lattice, normalization reduces the infinitely indexed coefficient equations to a finite system of Euler, transit, and directional vanishing relations, represented by a finitely presented directional coefficient module.

We prove that only finitely many exponent lattices contribute nonzero directed solution spaces and that every contributing coefficient module has finite length.
Macaulay inverse-system duality then identifies the dimension of the all-lattice directed formal logarithmic solution space with the length of the all-lattice coefficient module.
We also give an explicit formal-series realization and a polynomial colon-ideal criterion for all realizable lowest exponents.
\end{abstract}
\maketitle
\section{Introduction}
\label{sec:introduction}
The theory of $A$-hypergeometric systems associates systems of partial differential equations with integer configurations and complex parameters.
It originated in the work of Gel'fand, Kapranov, and Zelevinsky and subsequently developed through interactions among toric geometry, combinatorial commutative algebra, Gr\"obner deformation, and the algebraic analysis of hypergeometric functions \cite{GKZ89,SST,Adolphson}.
Multivariate Frobenius-type series can be constructed through Gr\"obner degenerations and indicial ideals by the canonical-series algorithm in \cite[Chapter~2]{SST}.
Logarithm-free canonical series and their dimensions were studied in \cite{SaitoLogFree}, while formal Nilsson solutions for irregular $A$-hypergeometric systems with respect to generic positive weights were developed in \cite{DMM}.
These results show that weights and exponent data play a central role in the construction and organization of series solutions.

We recall the system in the generality used throughout this paper.
Let
\[
A=(\bm{a}_{1},\dots,\bm{a}_{n})=(a_{ij})\in\Mat_{d\times n}(\Z)
\]
have rank $d$.
We impose no assumption that the columns of $A$ lie in an affine hyperplane, and we impose no pointedness or positivity condition on the cone generated by them.
Let $\bm{x}=(x_{1},\dots,x_{n})$, let $\partial_{\bm{x}}=(\partial_{x_{1}},\dots,\partial_{x_{n}})^{T}$, and let
\[
D:=\C\langle\bm{x},\partial_{\bm{x}}\rangle:=\C\langle x_{1},\dots,x_{n},\partial_{x_{1}},\dots,\partial_{x_{n}}\rangle
\]
be the $n$-th Weyl algebra.
For $\bm{u}\in\Z^{n}$, write $\bm{u}=\bm{u}_{+}-\bm{u}_{-}$ with disjointly supported vectors $\bm{u}_{+},\bm{u}_{-}\in\N^{n}$, and put
\[
L:=\Ker_{\Z}(A).
\]
The toric ideal is
\[
I_{A}:=\left\langle\partial_{\bm{x}}^{\bm{u}_{+}}-\partial_{\bm{x}}^{\bm{u}_{-}}\,\middle|\,\bm{u}\in L\right\rangle\subset\C[\partial_{\bm{x}}].
\]
Writing $\theta_{x_{j}}:=x_{j}\partial_{x_{j}}$ and $\theta_{\bm{x}}:=(\theta_{x_{1}},\dots,\theta_{x_{n}})^{T}$, define the Euler operators by
\[
(A\theta_{\bm{x}})_{i}:=\sum_{j=1}^{n}a_{ij}\theta_{x_{j}}
\qquad
(i=1,\dots,d).
\]
For $\bm{\beta}=(\beta_{1},\dots,\beta_{d})^{T}\in\C^{d}$, the $A$-hypergeometric ideal is
\begin{equation}
\label{eq:introduction-A-hypergeometric-ideal}
H_{A}(\bm{\beta}):=DI_{A}+\sum_{i=1}^{d}D\bigl((A\theta_{\bm{x}})_{i}-\beta_{i}\bigr),
\end{equation}
and the corresponding $A$-hypergeometric $D$-module is $D/H_{A}(\bm{\beta})$.
Holonomicity without the classical homogeneity restriction is established in \cite[Theorem~3.9]{Adolphson}.
The present paper does not compare the formal solution spaces constructed here with the holonomic rank, except for an independent numerical comparison in the final example.

Logarithmic series solutions have been approached through multivariate versions of Frobenius's method.
A perturbation construction for logarithmic $A$-hypergeometric series was introduced in \cite{Log1}.
Fixed-exponent coefficient spaces and sufficient conditions for obtaining all solutions with a selected exponent were subsequently developed in \cite{Log2}.
An extended Frobenius framework for all $A$-hypergeometric series solutions, together with a dual formulation in terms of constant-coefficient differential operators, was established in \cite{Log3}.
The comparison problem between ambient and intrinsic perturbations raised in \cite{Log3} was subsequently resolved in \cite{NakanoObstruction}.
For a fixed fake exponent and an ordered negative-support family, intrinsic perturbation was shown to produce, in general, a proper subspace of the ambient coefficient space, and the discrepancy was organized into an intrinsic-perturbation obstruction module defined from the corresponding colon ideals.
The construction in the present paper is independent of this comparison: rather than selecting a perturbation mechanism at a fixed fake exponent, it represents the full classwise coefficient space directly by the polynomial dual of the finite directional coefficient module and organizes these modules over the entire Euler fiber.
Complementary computability criteria for parameters in the core were obtained in \cite{NagamineCore}, and an Artinian and Hilbert-series description of leading logarithmic coefficient spaces attached to a fixed fake exponent of a homogeneous system was given in \cite{Nagamine}.
The fixed-exponent perspective in these works is indispensable for explicit construction, but it leaves a prior organizational question: if no fake exponent is selected in advance, which exponent classes in the entire Euler fiber can contribute directed formal logarithmic solutions?

The present paper addresses this question.
Because $A$ has rank $d$, the complex-linear map $A_{\C}:\C^{n}\to\C^{d}$ is surjective, and the Euler fiber $A_{\C}^{-1}(\bm{\beta})$ is a nonempty affine space parallel to $\Ker_{\C}(A)$.
For $\bm{\lambda}\in A_{\C}^{-1}(\bm{\beta})$, the coset
\[
\Lambda(\bm{\lambda}):=\bm{\lambda}+L
\]
is called an exponent lattice.
When $n>d$, the family of such $L$-cosets in the Euler fiber has the cardinality of $\C$.
Thus the initial parameter space for exponents is generally uncountable, although each exponent lattice is countable.
The first global problem is to replace this uncountable family by a finite, intrinsically defined collection of contributing exponent lattices.

Fix an $L$-generic vector $\bm{w}\in\R^{n}$, meaning that $\bm{w}\cdot\bm{u}\neq0$ for every nonzero $\bm{u}\in L$.
It defines a total order inside each exponent lattice, but no order between distinct exponent lattices.
Each exponent lattice is stratified by negative support.
Every stratum is represented by the integral points of a rational sign polyhedron in Gale coordinates, and boundedness below in the $\bm{w}$-order is equivalent to nonnegativity of the induced functional on its recession cone by \cref{prop:directional-boundedness-stratum-minima}.
This separates the directional geometry from the subsequent coefficient algebra and works without homogeneity or pointedness assumptions on $A$.

After an exponent lattice is embedded in its $\Z^{n}$-stable ambient exponent component, the Euler equations force formal logarithmic solutions back onto the Euler fiber by \cref{prop:ambient-coefficient-criterion-Euler-support-reduction}.
Within one exponent lattice, the toric equations compare raw coefficients at infinitely many lattice points.
A lattice-wide normalization reduces these equations to finitely many polynomial components indexed by the negative supports occurring in that lattice.
The normalized Euler equations, the squarefree transit equations, and the vanishing of directionally unacceptable components define the finite directional coefficient module $M_{\Lambda,\bm{w}}^{\mathrm{dir}}$.
The classwise presentation and inverse normalization yield
\begin{equation}
\label{eq:introduction-classwise-directional-isomorphism}
\Sol_{\bm{w},\Lambda}^{\mathrm{dir}}\bigl(H_{A}(\bm{\beta})\bigr)\cong\Coeff_{\bm{w}}^{\mathrm{dir}}(\Lambda)\cong\Hom_{S}\left(M_{\Lambda,\bm{w}}^{\mathrm{dir}},\C[\bm{y}]\right),
\end{equation}
where $S=\C[\partial_{y_{1}},\dots,\partial_{y_{n}}]$ acts on $\C[\bm{y}]$ by constant-coefficient differentiation; see \cref{thm:directed-coefficient-module-presentation}.
The normalization in \eqref{eq:introduction-classwise-directional-isomorphism} is an exponent-lattice version of the fixed-exponent coefficient comparisons in \cite[Section~4]{Log3}.
Its role here is different because the exponent lattice itself is an output of a finite global selection procedure rather than an input chosen in advance.

The first new finiteness mechanism is geometric and arithmetic.
If an exponent lattice contains a $\bm{w}$-bounded negative-support stratum, then the rows of a Gale dual matrix indexed by its integral coordinates have full column rank by \cref{lem:full-coordinate-rank-forced-by-boundedness}.
For every fixed full-rank row set, Smith normal form identifies the possible exponent lattices with a finite saturation quotient by \cref{prop:finite-classification-prescribed-integral-coordinates}.
It follows from \cref{thm:finiteness-admissible-exponent-lattice-family} that the admissible exponent-lattice family is finite.
This passage from an uncountable Euler fiber to finitely many nonzero classwise blocks is not a rank statement and does not use regular holonomicity.

The second finiteness mechanism is module-theoretic.
At a bounded inclusion-minimal negative support, the Euler linear forms and boundary monomials define a local constraint ideal whose radical is the homogeneous maximal ideal of $S$ by \cref{thm:finite-length-minimal-support-constraint-quotient}.
Since the directional module is generated by the classes indexed by bounded inclusion-minimal negative supports, every nonzero classwise module has finite length.
Consequently, the all-lattice module
\begin{equation}
\label{eq:introduction-all-lattice-module}
M_{\bm{\beta},\bm{w}}^{\mathrm{dir,all}}
:=
\bigoplus_{\Lambda\in\mathfrak{L}_{\bm{\beta}}}M_{\Lambda,\bm{w}}^{\mathrm{dir}}
\end{equation}
is a finite direct sum of finite-length modules by \cref{thm:finite-length-classwise-all-lattice-modules}.
This finite-length statement is stronger than finite generation and retains nilpotent coefficient directions that need not be detected by counting possible lowest exponents.
The final example makes this distinction explicit: the all-lattice module has length ten, while only nine lowest exponents are realizable.

Finite-length Macaulay--Matlis duality converts module length into solution-space dimension.
In characteristic zero, the Matlis dual of a finite-length module supported at the homogeneous maximal ideal admits a polynomial realization under constant-coefficient differentiation; see \cite[Section~21]{Eisenbud} and \cite[Theorem~2.1 and equations~(2.1)--(2.3)]{SchulzeTozzo}.
Combining this duality with \eqref{eq:introduction-classwise-directional-isomorphism} gives
\begin{equation}
\label{eq:introduction-global-length-dimension}
\dim_{\C}\Sol_{\bm{\beta},\bm{w}}^{\mathrm{dir,all}}\bigl(H_{A}(\bm{\beta})\bigr)=\length_{S}\left(M_{\bm{\beta},\bm{w}}^{\mathrm{dir,all}}\right)=\sum_{\Lambda\in\mathfrak{L}_{\bm{\beta},\bm{w}}^{\mathrm{adm}}}\length_{S}\left(M_{\Lambda,\bm{w}}^{\mathrm{dir}}\right),
\end{equation}
as proved in \cref{thm:global-length-dimension-formula}.
The formal logarithmic realization in \cref{thm:formal-logarithmic-realization} reconstructs the raw coefficients and the corresponding series explicitly.
Finally, \cref{thm:finite-criterion-all-classwise-realizable-exponents,thm:polynomial-colon-ideal-criterion-classwise-realizability} determine every realizable lowest exponent by finite coordinate-kernel and polynomial colon-ideal tests.
The latter criterion is filtration-sensitive because earlier acceptable components are forced to vanish before the component under examination is tested.
It therefore differs from the fixed-coefficient colon ideal in \cite[Theorem~4.7]{Log3}, and its proof does not require the homogeneous indicial comparison in \cite[Theorem~4.9]{Log3}.

For convenience, the principal conclusions are collected below without assigning a theorem number.
\begin{maintheorem}
Let $A\in\Mat_{d\times n}(\Z)$ have rank $d$, let $\bm{\beta}\in\C^{d}$, put $L=\Ker_{\Z}(A)$, and fix an $L$-generic direction $\bm{w}\in\R^{n}$.
Then the following statements hold.
\begin{enumerate}[\rm (1)]
\item For every exponent lattice $\Lambda\in\mathfrak{L}_{\bm{\beta}}$, the classwise directed formal logarithmic solution space is represented by the finite directional coefficient module through the isomorphisms in \cref{thm:directed-coefficient-module-presentation}.
\item An exponent lattice contributes a nonzero directed formal logarithmic solution space if and only if it possesses a bounded inclusion-minimal negative-support stratum; every such stratum minimum is realized as a lowest exponent by \cref{thm:minimal-support-solutions-nonvanishing}.
\item Only finitely many exponent lattices contribute, by \cref{thm:finiteness-admissible-exponent-lattice-family}.
\item Every contributing classwise directional coefficient module and the all-lattice directional coefficient module have finite length, by \cref{thm:finite-length-classwise-all-lattice-modules}.
\item The dimension of the all-lattice directed formal logarithmic solution space is given by \eqref{eq:introduction-global-length-dimension}, and every normalized coefficient family is explicitly realized as a formal logarithmic series by \cref{thm:formal-logarithmic-realization}.
\item All realizable lowest exponents are determined by the finite criteria in \cref{thm:finite-criterion-all-classwise-realizable-exponents,thm:polynomial-colon-ideal-criterion-classwise-realizability}.
\end{enumerate}
\end{maintheorem}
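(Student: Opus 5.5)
The Main theorem collects results proved later in the paper, so the plan is to assemble it from those results in the order of its items, checking at each stage that the hypotheses line up. No homogeneity or pointedness is used anywhere; only $\rank A=d$ and $L$-genericity of $\bm{w}$ enter.

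For (1), I will fix $\Lambda=\Lambda(\bm{\lambda})$ and embed it in its $\Z^{n}$-stable ambient exponent component. \cref{prop:ambient-coefficient-criterion-Euler-support-reduction} then forces any formal logarithmic solution to be supported on the Euler fiber. Next comes the lattice-wide normalization, which turns the infinitely many toric relations into finitely many polynomial components indexed by negative supports. The normalized Euler relations, the squarefree transit relations and the directional vanishing relations together present $M_{\Lambda,\bm{w}}^{\mathrm{dir}}$. Applying $\Hom_{S}(-,\C[\bm{y}])$ to this presentation recovers exactly the coefficient equations. This gives \eqref{eq:introduction-classwise-directional-isomorphism}, which is \cref{thm:directed-coefficient-module-presentation}.

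For (2), the two directions are handled separately.
\begin{itemize}
\item[\emph{Necessity.}] I will use \cref{prop:directional-boundedness-stratum-minima}: a stratum is $\bm{w}$-bounded exactly when $\bm{w}$ is nonnegative on its recession cone. If no inclusion-minimal stratum is bounded, the vanishing relations kill every generator. The generators are the classes at bounded inclusion-minimal negative supports, so the module is zero and hence so is the solution space.
\item[\emph{Sufficiency.}] \cref{thm:minimal-support-solutions-nonvanishing} produces, at each bounded minimal stratum, a solution whose lowest exponent is the stratum minimum.
\end{itemize}

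For (3), I will combine \cref{lem:full-coordinate-rank-forced-by-boundedness} with \cref{prop:finite-classification-prescribed-integral-coordinates}. Boundedness forces the Gale rows indexed by the integral coordinates to have full column rank. For each such row set, Smith normal form allows only finitely many lattices. There are finitely many row sets, which gives \cref{thm:finiteness-admissible-exponent-lattice-family}.

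For (4), \cref{thm:finite-length-minimal-support-constraint-quotient} shows that each local constraint quotient at a bounded minimal support is supported at the homogeneous maximal ideal, and therefore has finite length. The classwise module is generated by these classes, so it has finite length. By (3) the direct sum \eqref{eq:introduction-all-lattice-module} has only finitely many nonzero summands, which gives \cref{thm:finite-length-classwise-all-lattice-modules}.

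For (5), I will apply Macaulay–Matlis duality in characteristic zero. For a finite-length module $M$ supported at the homogeneous maximal ideal, $\dim_{\C}\Hom_{S}(M,\C[\bm{y}])=\length_{S}M$. Summing this over the finitely many admissible lattices and using (1) yields \eqref{eq:introduction-global-length-dimension}, that is, \cref{thm:global-length-dimension-formula}. The explicit series realization is \cref{thm:formal-logarithmic-realization}. Item (6) is exactly \cref{thm:finite-criterion-all-classwise-realizable-exponents,thm:polynomial-colon-ideal-criterion-classwise-realizability}.

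The main obstacle is the genuine input behind (3). One must show that boundedness below in the $\bm{w}$-order forces full column rank of the Gale rows, and then that the Smith normal form classification is finite even though the Euler fiber is uncountable. The underlying argument I would use is this: if the Gale rows had a nonzero kernel direction, then a recession direction would exist on which $\bm{w}$ is negative. This would contradict boundedness, using the $L$-genericity of $\bm{w}$.
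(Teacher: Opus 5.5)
Your proposal is correct and takes the paper's route: the Main theorem has no separate proof but is assembled item by item from the cited results. Your sketch of the full-rank obstruction, in which a kernel vector $\bm q$ of $B_{E_{\Lambda}}$ gives $\pm\bm q\in R_{\Lambda,I}$ and $L$-genericity then contradicts boundedness, is exactly the proof of \cref{lem:full-coordinate-rank-forced-by-boundedness}. One small correction: the recession-cone criterion concerns $\bm\omega=B^{T}\bm w$ on $R_{\Lambda,I}\subset\R^{r}$, not $\bm w$ itself, and the necessity half of (2) does not need it at all, since then every negative support contains an unbounded inclusion-minimal one and its generator is killed by the vanishing relations.
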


No assertion of convergence is made in the main theorem.
Nor is the space in \eqref{eq:introduction-global-length-dimension} identified here with a local holomorphic solution space or with the holonomic rank.
Such comparisons require analytic hypotheses absent from the present formal construction.
This distinction is especially important in the nonhomogeneous case, where irregular Nilsson theory involves additional convergence and asymptotic questions \cite{DMM}.
The final section records an independent agreement between the computed formal dimension and a Macaulay2 holonomic-rank calculation, but that agreement is not used in the proofs.
A different global framework for rank variation is provided by Euler--Koszul homology and local cohomology in \cite{MMW}.
The present construction instead isolates a finite coefficient module directly from exponent-lattice support geometry.

The paper is organized as follows.
Section~\ref{sec:support-geometry} constructs the ambient formal modules, decomposes the Euler fiber into exponent lattices, and develops the polyhedral geometry of directional negative-support strata.
Section~\ref{sec:classwise-finite-presentation} proves Euler support reduction, derives the finite normalized coefficient system, constructs its directional coefficient module, and gives the exact classwise nonvanishing criterion.
Section~\ref{sec:global-finiteness} proves the finiteness of the admissible exponent-lattice family by the Gale full-rank obstruction and finite saturation quotients.
Section~\ref{sec:local-finite-length} proves finite length locally and globally.
Section~\ref{sec:dimension-series-realization} establishes the length--dimension formula, the explicit formal logarithmic realization, and the finite coordinate and colon-ideal criteria for realizable lowest exponents.
Section~\ref{sec:complete-all-lattice-example} carries out the complete algorithm for a rank-jumping parameter and exhibits a nonsemisimple directional coefficient block.

\section{Ambient formal series and directional support geometry}
\label{sec:support-geometry}

\subsection{Exponent lattices}
\label{subsec:Exponent-lattices}

Let $A_{\C}:\C^{n}\to\C^{d}$ be the complex linear map associated with $A$.
For $\bm{\beta}\in\C^{d}$, we call the fiber $A_{\C}^{-1}(\bm{\beta})$ the \emph{Euler fiber} of $H_{A}(\bm{\beta})$.
For $\bm{\lambda}\in A_{\C}^{-1}(\bm{\beta})$, define the lattices
\[
\Lambda(\bm{\lambda}):=\bm{\lambda}+L\subset\bm{\lambda}+\Z^{n}=\Lambda(\bm{\lambda})+\Z^{n}=:\widetilde{\Lambda}(\bm{\lambda}).
\]
Then $\widetilde{\Lambda}(\bm{\lambda})\cap A_{\C}^{-1}(\bm{\beta})=\bm{\lambda}+L$, and
\begin{equation}
\label{eq:ambient-components-indexed-by-L-cosets}
\widetilde{\Lambda}(\bm{\lambda})=\widetilde{\Lambda}(\bm{\lambda}')\iff \Lambda(\bm{\lambda})=\Lambda(\bm{\lambda}')\iff \bm{\lambda}-\bm{\lambda}'\in L.
\end{equation}
We call $\Lambda(\bm{\lambda})$ the \emph{exponent lattice} and $\widetilde{\Lambda}(\bm{\lambda})$ the \emph{ambient exponent component} of $\bm{\lambda}$.
Denote the set of exponent lattices in the Euler fiber $A_{\C}^{-1}(\bm{\beta})$ by $\mathfrak{L}_{\bm{\beta}}$.

The Euler fiber and its $\Z^{n}$-stable ambient enlargement admit the decompositions
\begin{equation}
\label{eq:Euler-fiber-and-ambient-component-decompositions}
A_{\C}^{-1}(\bm{\beta})=\coprod_{\Lambda\in\mathfrak{L}_{\bm{\beta}}}\Lambda,\quad A_{\C}^{-1}(\bm{\beta})+\Z^{n}=\coprod_{\Lambda\in\mathfrak{L}_{\bm{\beta}}}\widetilde{\Lambda},
\end{equation}
where $\widetilde{\Lambda}:=\Lambda+\Z^{n}$.

\subsection{Ambient coefficient and logarithmic formal-series modules}
\label{subsec:ambient-logarithmic-modules}
Fix an exponent lattice $\Lambda\in\mathfrak{L}_{\bm{\beta}}$, and put $\widetilde{\Lambda}:=\Lambda+\Z^{n}$.
We first define the ambient objects on which the Weyl algebra $D$ acts, and only afterward restrict their supports to the Euler fiber $A_{\C}^{-1}(\bm{\beta})$.

Let $\bm{y}=(y_{1},\dots,y_{n})$ be a new set of indeterminates, and let $\partial_{\bm{y}}=(\partial_{y_{1}},\dots,\partial_{y_{n}})^{T}$, where $\partial_{y_{j}}:=\frac{\partial}{\partial y_{j}}$ for $j=1,\dots,n$.

Let $S:=\C[\partial_{y_{1}},\dots,\partial_{y_{n}}]$, acting on $\C[\bm{y}]$ by the usual constant-coefficient differentiation, denoted by $\mathbin{\bullet}_{\bm{y}}$.

Define the ambient formal logarithmic series space by
\[
\mathscr{F}_{\widetilde{\Lambda}}^{\log}:=\left\{\sum_{\bm{\gamma}\in\widetilde{\Lambda}}\bm{x}^{\bm{\gamma}}p_{\bm{\gamma}}(\log\bm{x})\ \middle|\ p_{\bm{\gamma}}(\bm{y})\in\C[\bm{y}]\right\},
\]
where $\log\bm{x}:=(\log x_{1},\dots,\log x_{n})$.
The monomials $\bm{x}^{\bm{\gamma}}$ for $\bm{\gamma}\in\widetilde{\Lambda}$ are considered as formal basis symbols and equality in $\mathscr{F}_{\widetilde{\Lambda}}^{\log}$ is coefficientwise.
For $\varphi=\sum_{\bm{\gamma}\in\widetilde{\Lambda}}\bm{x}^{\bm{\gamma}}p_{\bm{\gamma}}(\log\bm{x})\in\mathscr{F}_{\widetilde{\Lambda}}^{\log}$, define its support by
\[
\Supp(\varphi):=\left\{\bm{\gamma}\in\widetilde{\Lambda}\,\middle|\,p_{\bm{\gamma}}(\bm{y})\neq 0\right\}.
\]

Define an action $\mathbin{\bullet}$ of $D$ on $\mathscr{F}_{\widetilde{\Lambda}}^{\log}$ as follows.
For $j=1,\dots,n$, $\bm{\gamma}\in\widetilde{\Lambda}$, and $p(\bm{y})\in\C[\bm{y}]$, define $x_{j}\mathbin{\bullet}(\bm{x}^{\bm{\gamma}}p(\log\bm{x}))$ and $\partial_{x_{j}}\mathbin{\bullet}(\bm{x}^{\bm{\gamma}}p(\log\bm{x}))$ in $\mathscr{F}_{\widetilde{\Lambda}}^{\log}$ by
\begin{align}
\label{eq:ambient-logarithmic-Weyl-action}
x_{j}\mathbin{\bullet}\bigl(\bm{x}^{\bm{\gamma}}p(\log\bm{x})\bigr)&:=\bm{x}^{\bm{\gamma}+\bm{e}_{j}}p(\log\bm{x}),\\
\label{eq:ambient-logarithmic-derivative-action}
\partial_{x_{j}}\mathbin{\bullet}\bigl(\bm{x}^{\bm{\gamma}}p(\log\bm{x})\bigr)&:=\bm{x}^{\bm{\gamma}-\bm{e}_{j}}\left((\gamma_{j}+\partial_{y_{j}})\mathbin{\bullet}_{\bm{y}}p\right)(\log\bm{x}),
\end{align}
and extend it coefficientwise to all of $\mathscr{F}_{\widetilde{\Lambda}}^{\log}$.

Define the ambient coefficient space by
\[
\mathscr{C}_{\widetilde{\Lambda}}:=\prod_{\bm{\gamma}\in\widetilde{\Lambda}}\C[\bm{y}].
\]
Consider the coefficient extraction map $\Psi_{\widetilde\Lambda}:\mathscr{F}_{\widetilde{\Lambda}}^{\log}\to\mathscr{C}_{\widetilde{\Lambda}}$ defined by
\begin{equation}
\label{eq:ambient-coefficient-extraction}
\Psi_{\widetilde\Lambda}\left(\sum_{\bm{\gamma}\in\widetilde{\Lambda}} \bm{x}^{\bm{\gamma}} p_{\bm{\gamma}}(\log\bm{x})\right):=(p_{\bm{\gamma}}(\bm{y}))_{\bm{\gamma}\in\widetilde{\Lambda}}.
\end{equation}
This map is a $\C$-linear isomorphism.
Now we define an action $\mathbin{\bullet}$ of $D$ on $\mathscr{C}_{\widetilde{\Lambda}}$ so that the map $\Psi_{\widetilde\Lambda}$ is $D$-linear.

For $j=1,\dots,n$ and $\bm{c}=(c_{\bm{\gamma}})_{\bm{\gamma}\in\widetilde{\Lambda}}\in\mathscr{C}_{\widetilde{\Lambda}}$, define $x_{j}\mathbin{\bullet}\bm{c}$ and $\partial_{x_{j}}\mathbin{\bullet}\bm{c}$ in $\mathscr{C}_{\widetilde{\Lambda}}$ by
\begin{align}
\label{eq:ambient-coefficient-variable-action}
(x_{j}\mathbin{\bullet}\bm{c})_{\bm{\delta}}&:=c_{\bm{\delta}-\bm{e}_{j}},\\
\label{eq:ambient-coefficient-derivative-action}
(\partial_{x_{j}}\mathbin{\bullet}\bm{c})_{\bm{\delta}}&:=
(\delta_{j}+1+\partial_{y_{j}})\mathbin{\bullet}_{\bm{y}}c_{\bm{\delta}+\bm{e}_{j}}
\end{align}
for every $\bm{\delta}\in\widetilde{\Lambda}$.
Here $\bm{e}_{j}$ is the $j$-th standard basis vector of $\Z^{n}$.
By direct computation, these operators $x_{j}\mathbin{\bullet}$ and $\partial_{x_{j}}\mathbin{\bullet}$ satisfy the Weyl relations and therefore make $\mathscr{C}_{\widetilde{\Lambda}}$ a left $D$-module, and the $D$-module structures on $\mathscr{C}_{\widetilde{\Lambda}}$ and $\mathscr{F}_{\widetilde{\Lambda}}^{\log}$ are compatible under the isomorphism $\Psi_{\widetilde{\Lambda}}$.

The inverse of $\Psi_{\widetilde{\Lambda}}$ is the formal logarithmic substitution map $\Phi_{\widetilde{\Lambda}}:\mathscr{C}_{\widetilde{\Lambda}}\to\mathscr{F}_{\widetilde{\Lambda}}^{\log}$ defined by
\begin{equation}
\label{eq:ambient-formal-logarithmic-substitution}
\Phi_{\widetilde{\Lambda}}\left((p_{\bm{\gamma}}(\bm{y}))_{\bm{\gamma}}\right):=\sum_{\bm{\gamma}\in\widetilde{\Lambda}}\bm{x}^{\bm{\gamma}} p_{\bm{\gamma}}(\log\bm{x}).
\end{equation}
We identify the left $D$-module $\mathscr{F}_{\widetilde{\Lambda}}^{\log}$ with $\mathscr{C}_{\widetilde{\Lambda}}$ through the isomorphism $\Psi_{\widetilde{\Lambda}}$.

For $\Omega\subset\widetilde{\Lambda}$, define the subspace of $\mathscr{F}_{\widetilde{\Lambda}}^{\log}$ supported on $\Omega$ by
\begin{equation}
\label{eq:Euler-fiber-supported-subspaces}
\mathscr{F}_{\widetilde{\Lambda}}^{\log}[\Omega]:=\left\{\varphi\in\mathscr{F}_{\widetilde{\Lambda}}^{\log}\,\middle|\,\Supp(\varphi)\subset\Omega\right\}.
\end{equation}
Its corresponding coefficient subspace of $\mathscr{C}_{\widetilde{\Lambda}}$ under $\Psi_{\widetilde{\Lambda}}$ is
\[
\mathscr{C}_{\widetilde{\Lambda}}[\Omega]:=\left\{\bm{c}\in\mathscr{C}_{\widetilde{\Lambda}}\,\middle|\, c_{\bm{\gamma}}=0\text{ for }\bm{\gamma}\notin\Omega\right\}.
\]

The decomposition in \eqref{eq:Euler-fiber-and-ambient-component-decompositions} yields the all-lattice ambient logarithmic formal-series module
\begin{equation}
\label{eq:all-lattice-ambient-logarithmic-module}
\mathscr{F}_{\bm{\beta}}^{\log,\mathrm{amb}}
:=
\bigoplus_{\Lambda\in\mathfrak{L}_{\bm{\beta}}}
\mathscr{F}_{\widetilde{\Lambda}}^{\log}.
\end{equation}
Each summand is preserved by the action of $D$, and hence \eqref{eq:all-lattice-ambient-logarithmic-module} is a left $D$-module.
Likewise, put
\begin{equation}
\label{eq:all-lattice-ambient-coefficient-module}
\mathscr{C}_{\bm{\beta}}^{\mathrm{amb}}
:=
\bigoplus_{\Lambda\in\mathfrak{L}_{\bm{\beta}}}
\mathscr{C}_{\widetilde{\Lambda}}.
\end{equation}
The coefficient extraction isomorphisms $\Psi_{\widetilde{\Lambda}}$ induce a canonical $D$-linear isomorphism
\begin{equation}
\label{eq:all-lattice-ambient-coefficient-extraction}
\Psi_{\bm{\beta}}^{\mathrm{amb}}
:
\mathscr{F}_{\bm{\beta}}^{\log,\mathrm{amb}}
\xrightarrow{\sim}
\mathscr{C}_{\bm{\beta}}^{\mathrm{amb}}.
\end{equation}
Every element of these direct sums has only finitely many nonzero exponent-lattice components.

\subsection{Negative-support strata and their polyhedral realization}
\label{subsec:negative-support-stratification}
For $\bm{\gamma} = (\gamma_{1},\dots,\gamma_{n})^{T} \in \C^{n}$, its negative support is defined by
\[
\nsupp(\bm{\gamma}):=\left\{j\in [n]:=\{1,\dots,n\}\,\middle|\,\gamma_{j}\in\Z_{<0}\right\}.
\]
Fix an exponent lattice $\Lambda=\Lambda(\bm{\lambda})\in\mathfrak{L}_{\bm{\beta}}$.
For $I\subset[n]$, put
\[
\Lambda_{I}:=\left\{\bm{\gamma}\in\Lambda\,\middle|\,\nsupp(\bm{\gamma})=I\right\},
\]
and we call $\Lambda_{I}$ the \emph{negative-support stratum} of $\Lambda$ indexed by $I$.
Denote the set of negative supports with non-empty negative-support strata of $\Lambda$ by
\[
\NS(\Lambda):=\left\{I\subset [n]\,\middle|\,\Lambda_{I}\neq\emptyset\right\}.
\]
Then $\NS(\Lambda)$ is a finite set, and we have the following finite decomposition of $\Lambda$ into negative-support strata:
\begin{equation}
\label{eq:finite-negative-support-stratification}
\Lambda=\coprod_{I\in\NS(\Lambda)}\Lambda_{I}.
\end{equation}

We also define its integral-coordinate set for $\Lambda=\Lambda(\bm{\lambda})$ by
\begin{equation}
\label{eq:integral-coordinate-set}
E_{\Lambda}:=\left\{j\in[n]\,\middle|\,\lambda_{j}\in\Z\right\},
\end{equation}
which is independent of the choice of $\bm{\lambda}\in\Lambda$ because differences of vectors in $\Lambda$ belong to $L\subset \Z^{n}$.
In particular, every $I\in\NS(\Lambda)$ is contained in $E_{\Lambda}$.

Put $r:=\rank_{\Z}(L)=n-d$ and let
\begin{equation}
\label{eq:exponent-lattice-basis-matrix}
B:=(\bm{b}_{1},\dots,\bm{b}_{r})\in\Mat_{n\times r}(\Z),
\end{equation}
be a Gale dual matrix of $A$, where $\bm{b}_{1},\dots,\bm{b}_{r}$ form a $\Z$-basis of $L$.
Then every $\bm{\gamma}\in\Lambda$ can be uniquely expressed as
\[
\bm{\gamma}=\bm{\lambda}+B\bm{m}=\left(\lambda_{1}+(B\bm{m})_{1},\dots,\lambda_{n}+(B\bm{m})_{n}\right)^{T}
\]
for some $\bm{m}\in\Z^{r}$.

For $I\in\NS(\Lambda)$, define the sign polyhedron
\[
P_{\Lambda,I}^{\bm{\lambda}}:=\left\{\bm{m}\in\R^{r}\,\middle|\,\begin{aligned}
\lambda_{j}+(B\bm{m})_{j} & \leq -1 && (j\in I), \\
\lambda_{j}+(B\bm{m})_{j} & \geq 0 && (j\in E_{\Lambda}\setminus I)
\end{aligned}
\right\}.
\]
This polyhedron depends on the choice of reference vector $\bm{\lambda}\in\Lambda$, but its recession cone
\begin{equation}
\label{eq:classwise-negative-support-recession-cone}
R_{\Lambda,I}:=\Rec(P_{\Lambda,I}^{\bm{\lambda}}):=\left\{\bm{v}\in\R^{r}\,\middle|\,
\begin{aligned}
(B\bm{v})_{j} & \leq 0 && (j\in I),\\
(B\bm{v})_{j} & \geq 0 && (j\in E_{\Lambda}\setminus I)
\end{aligned}
\right\}
\end{equation}
is independent of $\bm{\lambda}$.

\begin{lemma}[Integral description of a negative-support stratum]
\label{lem:integral-description-negative-support-stratum}
For every $I\in\NS(\Lambda)$, one has
\begin{equation}
\label{eq:negative-support-stratum-integral-description}
\Lambda_{I}=\left\{\bm{\lambda}+B\bm{m}\,\middle|\,\bm{m}\in P_{\Lambda,I}^{\bm{\lambda}}\cap\Z^{r}\right\}.
\end{equation}
\end{lemma}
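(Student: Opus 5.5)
The argument is a direct double inclusion, resting on two facts. First, for $\bm{\gamma}=\bm{\lambda}+B\bm{m}$ with $\bm{m}\in\Z^{r}$, the coordinate $\gamma_{j}=\lambda_{j}+(B\bm{m})_{j}$ is an integer exactly when $\lambda_{j}\in\Z$, i.e.\ exactly when $j\in E_{\Lambda}$, because $B\bm{m}\in L\subset\Z^{n}$. Second, the parametrization $\bm{m}\mapsto\bm{\lambda}+B\bm{m}$ is a bijection $\Z^{r}\to\Lambda$, because the columns of $B$ form a $\Z$-basis of $L$.

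For the inclusion ``$\subset$'', I take $\bm{\gamma}\in\Lambda_{I}$ and write $\bm{\gamma}=\bm{\lambda}+B\bm{m}$ with $\bm{m}\in\Z^{r}$. For $j\in I=\nsupp(\bm{\gamma})$, the coordinate $\gamma_{j}$ is a negative integer, hence $\gamma_{j}\le -1$. For $j\in E_{\Lambda}\setminus I$, the coordinate $\gamma_{j}$ is an integer by the first fact and is not negative because $j\notin\nsupp(\bm{\gamma})$, hence $\gamma_{j}\ge 0$. So $\bm{m}\in P_{\Lambda,I}^{\bm{\lambda}}\cap\Z^{r}$.

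For the inclusion ``$\supset$'', I take $\bm{m}\in P_{\Lambda,I}^{\bm{\lambda}}\cap\Z^{r}$ and set $\bm{\gamma}=\bm{\lambda}+B\bm{m}\in\Lambda$. I then compute $\nsupp(\bm{\gamma})$ coordinate by coordinate, in three cases. If $j\in I$, then $I\subset E_{\Lambda}$ (as noted after the definition of $E_{\Lambda}$), so $\gamma_{j}$ is an integer, and it is at most $-1$, so $j\in\nsupp(\bm{\gamma})$. If $j\in E_{\Lambda}\setminus I$, then $\gamma_{j}$ is an integer at least $0$, so $j\notin\nsupp(\bm{\gamma})$. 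If $j\notin E_{\Lambda}$, then $\gamma_{j}\notin\Z$, so again $j\notin\nsupp(\bm{\gamma})$. Hence $\nsupp(\bm{\gamma})=I$, i.e.\ $\bm{\gamma}\in\Lambda_{I}$.

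There is no genuine obstacle. The only point needing care is the coordinates $j\notin E_{\Lambda}$, which the polyhedron does not constrain at all; the first fact shows they can never enter the negative support. The hypothesis $I\in\NS(\Lambda)$ is only used to guarantee $I\subset E_{\Lambda}$, which the ``$\supset$'' direction needs.
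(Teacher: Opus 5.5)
Your proposal is correct and follows the paper's own argument: the paper likewise parametrizes $\Lambda$ by $\bm{m}\in\Z^{r}$ and checks coordinatewise that integrality holds exactly on $E_{\Lambda}$, with ``negative'' meaning $\leq -1$ and ``not negative'' meaning $\geq 0$ there. The paper states this as a single equivalence $\nsupp(\bm{\lambda}+B\bm{m})=I$ iff $\bm{m}\in P_{\Lambda,I}^{\bm{\lambda}}$ instead of two inclusions, and it uses $I\subset E_{\Lambda}$ only implicitly, whereas you point that step out explicitly.
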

\begin{proof}
Every vector of $\Lambda$ is uniquely $\bm{\lambda}+B\bm{m}$ with $\bm{m}\in\Z^{r}$.
For $j\in E_{\Lambda}$, the coordinate $\lambda_{j}+(B\bm{m})_{j}$ is integral; it is negative precisely when it is at most $-1$, and it is not negative precisely when it is at least $0$.
Coordinates outside $E_{\Lambda}$ are never integral and hence never belong to a negative support.
Thus $\nsupp(\bm{\lambda}+B\bm{m})=I$ is equivalent to the defining inequalities of $P_{\Lambda,I}^{\bm{\lambda}}$, proving the assertion.
\end{proof}

\subsection{Directional boundedness and bounded inclusion-minimal negative supports}
\label{subsec:directional-boundedness-minimal-supports}
Fix an exponent lattice $\Lambda\in\mathfrak{L}_{\bm{\beta}}$.

\begin{definition}[\texorpdfstring{$L$}{L}-generic direction]
\label{def:L-generic-direction}
A direction vector $\bm{w}\in\R^{n}$ is called an \emph{$L$-generic direction} if $\bm{w}\cdot\bm{u}\neq 0$ for $\bm{u}\in L\setminus\{\bm{0}\}$.
\end{definition}

The set of $L$-generic directions is
\[
\mathcal{W}_{L}^{\mathrm{gen}}=\R^{n}\setminus\bigcup_{\bm{u}\in L\setminus\{\bm{0}\}}\bm{u}^{\perp}.
\]
Since $L$ is countable and the orthogonal complement $\bm{u}^{\perp}$ for each $\bm{u}\in L\setminus\{\bm{0}\}$ is a proper closed hyperplane, $\mathcal{W}_{L}^{\mathrm{gen}}$ is a dense $G_{\delta}$ subset of $\R^{n}$ by the Baire category theorem.
This $L$-genericity is stronger than the usual Gr\"{o}bner-genericity used to make $\operatorname{in}_{\bm{w}}(I_{A})$ monomial; it is imposed here to give a total order on every exponent lattice and not to construct a Gr\"{o}bner degeneration \cite{SST,DMM}.
If $L=\{\bm{0}\}$, every direction is $L$-generic.

Fix an $L$-generic direction $\bm{w}\in\mathcal{W}_{L}^{\mathrm{gen}}$ throughout this section.
For every exponent lattice $\Lambda\in\mathfrak{L}_{\bm{\beta}}$ and every pair $\bm{\gamma},\bm{\gamma}'\in\Lambda$, define
\[
\bm{\gamma}\leq_{\bm{w},\Lambda}\bm{\gamma}'\iff \bm{w}\cdot \left(\bm{\gamma}'-\bm{\gamma}\right)\geq 0.
\]
By the $L$-genericity of $\bm{w}$, this relation defines a total order on $\Lambda$.
When the exponent lattice is clear from the context, we write $\leq_{\bm{w}}$ in place of $\leq_{\bm{w},\Lambda}$.
The same $\bm{w}$ is used as a linear functional on $L$ for every lattice $\Lambda$.
No comparison is defined between vectors belonging to distinct exponent lattices.
Their difference need not be real, and even when it is real, the present theory uses $\bm{w}$ only to order differences lying in $L$.

Let $\Lambda=\Lambda(\bm{\lambda})\in\mathfrak{L}_{\bm{\beta}}$ and let $\Omega\subset\Lambda$ be nonempty.
The subset $\Omega$ is called \emph{$\bm{w}$-bounded below} if $\left\{\bm{w}\cdot(\bm{\gamma}-\bm{\lambda}) \,\middle|\, \bm{\gamma}\in\Omega \right\}$ is bounded below in $\R$.
This property is independent of the choice of a reference vector $\bm{\lambda}\in\Lambda$.

An element $\bm{v}\in\Omega$ is called the \emph{$\bm{w}$-minimum of $\Omega$} if $\bm{v}\leq_{\bm{w},\Lambda}\bm{\gamma}$ for all $\bm{\gamma}\in\Omega$.

Let $B\in\Mat_{n\times r}(\Z)$ be as in \eqref{eq:exponent-lattice-basis-matrix}.
Then $\bm{\omega}:= B^{T}\bm{w} \in \R^{r}$ defines a linear functional on $\R^{r}$.

\begin{proposition}[Directional boundedness and stratum minima]
\label{prop:directional-boundedness-stratum-minima}
Let $\Lambda\in\mathfrak{L}_{\bm{\beta}}$ and $I\in\NS(\Lambda)$.
Then the following conditions are equivalent.
\begin{enumerate}[\rm (1)]
\item The stratum $\Lambda_{I}$ is $\bm{w}$-bounded below.
\item The linear functional $\bm{\omega}=B^{T}\bm{w}$ is nonnegative on the recession cone $R_{\Lambda,I}$.
\item The stratum $\Lambda_{I}$ has a unique $\bm{w}$-minimum.
\end{enumerate}
\end{proposition}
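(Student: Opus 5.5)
The plan is to transport everything to Gale coordinates and prove the cycle (1)$\Rightarrow$(2)$\Rightarrow$(3)$\Rightarrow$(1). By \cref{lem:integral-description-negative-support-stratum}, $\Lambda_I=\{\bm{\lambda}+B\bm{m}\mid \bm{m}\in P\cap\Z^r\}$ with $P:=P_{\Lambda,I}^{\bm{\lambda}}$, and $\bm{w}\cdot(\bm{\gamma}-\bm{\lambda})=\bm{\omega}\cdot\bm{m}$. Hence (1) says that $\bm{\omega}$ is bounded below on $P\cap\Z^r$. Since $B$ has full column rank and $\bm{w}$ is $L$-generic, $\bm{\omega}\cdot\bm{m}\neq 0$ for every $\bm{m}\in\Z^r\setminus\{0\}$. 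This injectivity on $\Z^r$ will give uniqueness of minima and the total order.

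For (1)$\Rightarrow$(2), I argue by contraposition. Suppose $\bm{v}\in R_{\Lambda,I}$ has $\bm{\omega}\cdot\bm{v}<0$. Since $P$ is a rational polyhedron defined by the integer matrix $B$ and the data $\bm{\lambda}$, its recession cone $R_{\Lambda,I}$ is a rational polyhedral cone, generated by finitely many integral rays. If $\bm{\omega}$ were nonnegative on all integral generators, it would be nonnegative on the whole cone. So some integral $\bm{v}\in R_{\Lambda,I}\cap\Z^r$ has $\bm{\omega}\cdot\bm{v}<0$. Pick $\bm{m}_0\in P\cap\Z^r$, which is nonempty because $I\in\NS(\Lambda)$. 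Then $\bm{m}_0+k\bm{v}\in P\cap\Z^r$ for all $k\in\N$, and $\bm{\omega}\cdot(\bm{m}_0+k\bm{v})\to-\infty$. This contradicts (1).

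For (2)$\Rightarrow$(3), this is the main step: an integer-points version of the fact that a linear functional bounded on a polyhedron attains its minimum. Minimizing over all of $P$ is not enough, because we need the minimum over lattice points. I would use the Minkowski--Weyl decomposition $P=Q+R_{\Lambda,I}$ with $Q$ a polytope. I would then invoke the standard integer decomposition for rational polyhedra: $P\cap\Z^r=(Q'\cap\Z^r)+\bigl\{\textstyle\sum_i k_i\bm{g}_i \mid k_i\in\N\bigr\}$. Here $\bm{g}_i$ are the integral generators of the recession cone and $Q'=Q+\{\sum t_i\bm{g}_i\mid 0\le t_i\le1\}$ is bounded, so $Q'\cap\Z^r$ is finite. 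Write $\bm{m}\in P\cap\Z^r$ as $\bm{q}+\bm{c}$ with $\bm{q}\in Q'\cap\Z^r$ and $\bm{c}$ in the cone. Then (2) gives $\bm{\omega}\cdot\bm{m}\ge\bm{\omega}\cdot\bm{q}\ge\min_{Q'\cap\Z^r}\bm{\omega}$. Thus the infimum over $P\cap\Z^r$ equals the minimum over a finite nonempty set, and this minimum lies in $P\cap\Z^r$ after choosing $\bm{c}=0$. If a more elementary route is preferred, I would instead argue directly as follows: the set $\{\bm{m}\in P\cap\Z^r\mid \bm{\omega}\cdot\bm{m}\le \bm{\omega}\cdot\bm{m}_0\}$ is finite whenever it contains no nonzero recession direction with $\bm{\omega}=0$. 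Any such direction $\bm{v}$ with $\bm{\omega}\cdot\bm{v}=0$ could be rational and then scaled to an integral vector, contradicting $L$-genericity. An irrational $\bm{v}$ is the delicate case, which is why I would rely on the integer decomposition instead. Uniqueness of the minimizer follows because $\bm{m}\mapsto\bm{\omega}\cdot\bm{m}$ is injective on $\Z^r$. Equivalently, $\leq_{\bm{w},\Lambda}$ is a total order.

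Finally, (3)$\Rightarrow$(1) is immediate: if $\bm{v}$ is the $\bm{w}$-minimum, then $\bm{w}\cdot(\bm{\gamma}-\bm{\lambda})\ge\bm{w}\cdot(\bm{v}-\bm{\lambda})$ for every $\bm{\gamma}\in\Lambda_I$.
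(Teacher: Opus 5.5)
Your proof is correct. Your (1)$\Rightarrow$(2) and (3)$\Rightarrow$(1) coincide with the paper's; the genuine difference is in how you show that the minimum over lattice points is attained.

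The paper first gets (2)$\Rightarrow$(1) from $P=Q+R_{\Lambda,I}$. It then upgrades nonnegativity of $\bm{\omega}$ on $R_{\Lambda,I}$ to strict positivity on $R_{\Lambda,I}\setminus\{\bm{0}\}$. The argument is that $R_{\Lambda,I}\cap\Ker(\bm{\omega})$ is a face of a rational cone and hence rational, so if it were nonzero it would contain a nonzero integral vector, contradicting $L$-genericity. Consequently every sublevel set $\{\bm{m}\in P\mid\bm{\omega}\cdot\bm{m}\le c\}$ is bounded and contains finitely many lattice points, so the minimum is attained.

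You instead use the integer decomposition $P\cap\Z^{r}=(Q'\cap\Z^{r})+\N\bm{g}_{1}+\dots+\N\bm{g}_{s}$, which follows by splitting the cone coefficients into integer and fractional parts. This attains the infimum on the finite set $Q'\cap\Z^{r}$ using nonnegativity alone, and $L$-genericity enters only for uniqueness. Your route therefore gives attainment for any functional nonnegative on the recession cone. The paper's route additionally yields that each stratum has only finitely many points below any given $\bm{w}$-level.

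The ``delicate irrational case'' you worried about in your alternative route does not actually arise. Each integral generator satisfies $\bm{\omega}\cdot\bm{g}_{i}\ge 0$ by (2), and $\bm{\omega}\cdot\bm{g}_{i}\neq0$ because $B\bm{g}_{i}\in L\setminus\{\bm{0}\}$. Hence $\bm{\omega}\cdot\bm{g}_{i}>0$, and therefore $\bm{\omega}>0$ on every nonzero nonnegative combination of the $\bm{g}_{i}$, irrational directions included. Combined with your decomposition, this also bounds the $k_{i}$ and recovers the paper's finiteness of sublevel sets, without the face argument.
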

\begin{proof}
Choose $\bm{\lambda}\in\Lambda$ and write $\Lambda=\Lambda(\bm{\lambda})=\bm{\lambda}+L$.
By \cref{lem:integral-description-negative-support-stratum}, the map
\[
\bm{m}\longmapsto\bm{\lambda}+B\bm{m}
\]
is a bijection from $P_{\Lambda,I}^{\bm{\lambda}}\cap\Z^{r}$ onto $\Lambda_{I}$, and
\[
\bm{w}\cdot\bigl((\bm{\lambda}+B\bm{m})-\bm{\lambda}\bigr)=\bm{w}^{T}B\bm{m}=\bm{\omega}^{T}\bm{m}=\bm{\omega}\cdot\bm{m}.
\]
Assume that \textup{(2)} does not hold.
Then, by the rationality of $R_{\Lambda,I}$, there exists a nonzero integral vector $\bm{q}\in R_{\Lambda,I}$ such that $\bm{\omega}\cdot\bm{q}<0$.
For any $\bm{m}_{0}\in P_{\Lambda,I}^{\bm{\lambda}}\cap\Z^{r}$ and $k\in\N$, we have $\bm{m}_{0}+k\bm{q}\in P_{\Lambda,I}^{\bm{\lambda}}\cap\Z^{r}$ and
\[
\bm{\omega}\cdot (\bm{m}_{0}+k\bm{q})\to-\infty\quad (k\to\infty).
\]
This shows that $\bm{\omega}$ is unbounded below on $P_{\Lambda,I}^{\bm{\lambda}}\cap \Z^{r}$ and therefore $\Lambda_{I}$ is not $\bm{w}$-bounded below, namely \textup{(1)} does not hold.
Thus \textup{(1)} implies \textup{(2)}.

Conversely, assume that \textup{(2)} holds.
Since $P_{\Lambda,I}^{\bm{\lambda}}$ is a nonempty rational polyhedron, it is the Minkowski sum of a rational polytope and its recession cone $R_{\Lambda,I}$.
This shows that $\bm{\omega}$ is bounded below on $P_{\Lambda,I}^{\bm{\lambda}}\cap\Z^{r}$ and therefore $\Lambda_{I}$ is $\bm{w}$-bounded below, namely \textup{(1)} holds.

Assume these equivalent conditions \textup{(1)} and \textup{(2)}.
The functional $\bm{\omega}$ is in fact strictly positive on $R_{\Lambda,I}\setminus\{\bm{0}\}$.
Indeed, because $\bm{\omega}$ is nonnegative on the rational polyhedral cone $R_{\Lambda,I}$, the set $R_{\Lambda,I}\cap\Ker(\bm{\omega})$ is a face of $R_{\Lambda,I}$ and is therefore a rational polyhedral cone.
If this face were nonzero, it would contain a nonzero integral vector $\bm{q}$.
Then $B\bm{q}\in L\setminus\{\bm{0}\}$ and $\bm{w}\cdot B\bm{q}=\bm{\omega}\cdot\bm{q}=0$, contrary to the $L$-genericity of $\bm{w}$.
Hence $\bm{\omega}$ must be strictly positive on $R_{\Lambda,I}\setminus\{\bm{0}\}$.
Therefore, for every $c\in\R$, the polyhedron
\[
\{\bm{m}\in P_{\Lambda,I}^{\bm{\lambda}}\mid\bm{\omega}\cdot\bm{m}\leq c\}
\]
is bounded.
Taking $c=\bm{\omega}\cdot\bm{m}_{0}$ for one integral point $\bm{m}_{0}\in P_{\Lambda,I}^{\bm{\lambda}}\cap\Z^{r}$ gives a nonempty finite set of integral points on which $\bm{\omega}$ attains its minimum.
Choose a minimizing point and denote it by $\bm{m}_{\Lambda,I}$.
Then its image $\bm{\lambda}+B\bm{m}_{\Lambda,I}$ is a $\bm{w}$-minimum of $\Lambda_{I}$, and is unique by the $L$-genericity of $\bm{w}$.
This shows that \textup{(3)} holds.

Finally, it is clear that \textup{(3)} implies \textup{(1)} because a nonempty set with a minimum is necessarily bounded below.
\end{proof}
Define
\begin{align}
\label{eq:bounded-negative-support-strata}
\mathcal{B}_{\bm{w}}(\Lambda)&:=\{I\in\NS(\Lambda)\mid\Lambda_{I}\text{ is $\bm{w}$-bounded below}\},\\
\label{eq:unbounded-negative-support-strata}
\mathcal{U}_{\bm{w}}(\Lambda)&:=\NS(\Lambda)\setminus\mathcal{B}_{\bm{w}}(\Lambda).
\end{align}
For $I\in\mathcal{B}_{\bm{w}}(\Lambda)$, denote the unique minimum by
\[
\bm{v}_{\Lambda,I}:=\min_{\leq_{\bm{w},\Lambda}}\Lambda_{I}.
\]
The families $\mathcal{B}_{\bm{w}}(\Lambda)$ and $\mathcal{U}_{\bm{w}}(\Lambda)$ and the vectors $\bm{v}_{\Lambda,I}$ are independent of the reference vector and of the chosen Gale dual matrix $B$.

Denote by $\MinNS(\Lambda)$ the set of inclusion-minimal negative supports of $\Lambda$.

\begin{definition}[Bounded inclusion-minimal negative-support family and \texorpdfstring{$\bm{w}$}{w}-admissible exponent lattices]
\label{def:bounded-inclusion-minimal-negative-support-family}
The \emph{bounded inclusion-minimal negative-support family} is defined by
\[
\mathcal{M}_{\bm{w}}(\Lambda):=\MinNS(\Lambda)\cap\mathcal{B}_{\bm{w}}(\Lambda).
\]
An exponent lattice $\Lambda\in\mathfrak{L}_{\bm{\beta}}$ is called \emph{$\bm{w}$-admissible} if $\mathcal{M}_{\bm{w}}(\Lambda)\neq\emptyset$.
Put
\begin{align}
\label{eq:admissible-exponent-lattices}
\mathfrak{L}_{\bm{\beta},\bm{w}}^{\mathrm{adm}}&:=\left\{\Lambda\in\mathfrak{L}_{\bm{\beta}}\,\middle|\,\mathcal{M}_{\bm{w}}(\Lambda)\neq\emptyset\right\},\\
\label{eq:null-exponent-lattices}
\mathfrak{L}_{\bm{\beta},\bm{w}}^{\mathrm{null}}&:=\mathfrak{L}_{\bm{\beta}}\setminus\mathfrak{L}_{\bm{\beta},\bm{w}}^{\mathrm{adm}}.
\end{align}
\end{definition}
We will prove in \cref{thm:finiteness-admissible-exponent-lattice-family} that $\mathfrak{L}_{\bm{\beta},\bm{w}}^{\mathrm{adm}}$ is finite.

\section{Classwise normalized coefficients and finite directional modules}
\label{sec:classwise-finite-presentation}
Throughout this section, we fix an $L$-generic direction $\bm{w}\in\mathcal{W}_{L}^{\mathrm{gen}}$.
We first consider formal solutions in the all-lattice ambient $D$-module $\mathscr{F}_{\bm{\beta}}^{\log,\mathrm{amb}}$.
The Euler equations force each ambient component of such a solution onto the corresponding exponent lattice in the Euler fiber.
We then impose the directional condition separately on each exponent-lattice component and develop the finite normalized coefficient system after fixing one exponent lattice.
The coefficient comparison and normalization are exponent-lattice counterparts of the constructions in \cite[Section~4]{Log3}, where the operators are indexed by lattice shifts from a fixed fake exponent rather than directly by pairs of exponents.
\subsection{Euler support reduction and classwise directed solutions}
\label{subsec:ambient-Euler-support-reduction}
Define the all-lattice ambient formal solution space by
\begin{equation}
\label{eq:all-lattice-ambient-formal-solution-space}
\Sol_{\bm{\beta}}^{\mathrm{amb}}\bigl(H_{A}(\bm{\beta})\bigr):=\left\{\varphi\in\mathscr{F}_{\bm{\beta}}^{\log,\mathrm{amb}}\,\middle|\,H_{A}(\bm{\beta})\mathbin{\bullet}\varphi=0
\right\}.
\end{equation}
Every $\varphi\in\mathscr{F}_{\bm{\beta}}^{\log,\mathrm{amb}}$ has a unique finite decomposition
\begin{equation}
\label{eq:all-lattice-ambient-component-decomposition}
\varphi=\sum_{\Lambda\in\mathfrak{L}_{\bm{\beta}}}\varphi_{\Lambda},\quad\varphi_{\Lambda}=\sum_{\bm{\gamma}\in\widetilde{\Lambda}}\bm{x}^{\bm{\gamma}}r_{\Lambda,\bm{\gamma}}(\log\bm{x})\in\mathscr{F}_{\widetilde{\Lambda}}^{\log},
\end{equation}
where only finitely many components $\varphi_{\Lambda}$ are nonzero.
Since every summand $\mathscr{F}_{\widetilde{\Lambda}}^{\log}$ is a $D$-submodule, $\varphi$ is annihilated by $H_{A}(\bm{\beta})$ if and only if every $\varphi_{\Lambda}$ is annihilated by $H_{A}(\bm{\beta})$.
\begin{proposition}[Euler support reduction]
\label{prop:ambient-coefficient-criterion-Euler-support-reduction}
Let $\Lambda\in\mathfrak{L}_{\bm{\beta}}$, and let
\[
\varphi_{\Lambda}=\sum_{\bm{\gamma}\in\widetilde{\Lambda}}\bm{x}^{\bm{\gamma}}r_{\Lambda,\bm{\gamma}}(\log\bm{x})\in\mathscr{F}_{\widetilde{\Lambda}}^{\log}.
\]
If $\varphi_{\Lambda}$ is annihilated by the Euler operators $(A\theta_{\bm{x}})_{i}-\beta_{i}$ for $i=1,\dots,d$, then
\begin{equation}
\label{eq:Euler-support-reduction}
\Supp(\varphi_{\Lambda})\subset\widetilde{\Lambda}\cap A_{\C}^{-1}(\bm{\beta})=\Lambda.
\end{equation}
In particular, $\varphi_{\Lambda}\in\mathscr{F}_{\widetilde{\Lambda}}^{\log}[\Lambda]$.
Moreover, its coefficients satisfy
\begin{equation}
\label{eq:raw-Euler-equations}
(A\partial_{\bm{y}})_{i}\mathbin{\bullet}_{\bm{y}}r_{\Lambda,\bm{\gamma}}=0\quad(\bm{\gamma}\in\Lambda,\ i=1,\dots,d).
\end{equation}
\end{proposition}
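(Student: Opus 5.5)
We first compute the action of the Euler operators on a single term of $\varphi_{\Lambda}$. By \eqref{eq:ambient-logarithmic-Weyl-action} and \eqref{eq:ambient-logarithmic-derivative-action},
\[
\theta_{x_{j}}\mathbin{\bullet}\bigl(\bm{x}^{\bm{\gamma}}p(\log\bm{x})\bigr)=\bm{x}^{\bm{\gamma}}\bigl((\gamma_{j}+\partial_{y_{j}})\mathbin{\bullet}_{\bm{y}}p\bigr)(\log\bm{x}).
\]
Hence $\theta_{x_{j}}$ preserves exponents. Consequently,
\[
\bigl((A\theta_{\bm{x}})_{i}-\beta_{i}\bigr)\mathbin{\bullet}\varphi_{\Lambda}=\sum_{\bm{\gamma}\in\widetilde{\Lambda}}\bm{x}^{\bm{\gamma}}\Bigl(\bigl((A\bm{\gamma})_{i}-\beta_{i}+(A\partial_{\bm{y}})_{i}\bigr)\mathbin{\bullet}_{\bm{y}}r_{\Lambda,\bm{\gamma}}\Bigr)(\log\bm{x}).
\]
Since equality in $\mathscr{F}_{\widetilde{\Lambda}}^{\log}$ is coefficientwise, the hypothesis is equivalent to
\[
\bigl((A\bm{\gamma})_{i}-\beta_{i}+(A\partial_{\bm{y}})_{i}\bigr)\mathbin{\bullet}_{\bm{y}}r_{\Lambda,\bm{\gamma}}=0
\]
for every $\bm{\gamma}\in\widetilde{\Lambda}$ and every $i$. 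The whole argument reduces to analysing this family of equations, one polynomial at a time.

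The key step is the observation that $(A\partial_{\bm{y}})_{i}$ is a constant-coefficient first-order operator, so it strictly lowers total degree and is locally nilpotent on $\C[\bm{y}]$. Fix $\bm{\gamma}$ with $A\bm{\gamma}\neq\bm{\beta}$, and choose $i$ with $c:=(A\bm{\gamma})_{i}-\beta_{i}\neq 0$. Suppose $r_{\Lambda,\bm{\gamma}}\neq 0$, and compare top-degree homogeneous components in $(c+(A\partial_{\bm{y}})_{i})\mathbin{\bullet}_{\bm{y}}r_{\Lambda,\bm{\gamma}}=0$. The top-degree part of the left side is $c$ times the top-degree part of $r_{\Lambda,\bm{\gamma}}$, which is nonzero. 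This contradiction shows $r_{\Lambda,\bm{\gamma}}=0$, i.e.\ $c+(A\partial_{\bm{y}})_{i}$ acts invertibly on $\C[\bm{y}]$. Therefore $\Supp(\varphi_{\Lambda})\subset\widetilde{\Lambda}\cap A_{\C}^{-1}(\bm{\beta})$.

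It remains to identify this intersection with $\Lambda$, as already noted in \cref{subsec:Exponent-lattices}. Write $\bm{\gamma}=\bm{\lambda}+\bm{u}$ with $\bm{u}\in\Z^{n}$. Then $A\bm{\gamma}=\bm{\beta}$ if and only if $A\bm{u}=0$, i.e.\ $\bm{u}\in\Ker_{\Z}(A)=L$. Finally, for $\bm{\gamma}\in\Lambda$ we have $c=0$ for every $i$, and the equations become exactly \eqref{eq:raw-Euler-equations}.

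No step here is a genuine obstacle. The only point needing care is justifying invertibility of $c+(A\partial_{\bm{y}})_{i}$, which the degree argument settles. Equivalently, one may use the Neumann series $c^{-1}\sum_{k\geq0}(-c^{-1}(A\partial_{\bm{y}})_{i})^{k}$, which is a finite sum on each polynomial.
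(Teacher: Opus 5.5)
Your proposal is correct and follows essentially the same route as the paper: coefficientwise comparison for the Euler operators, then invertibility of $(A\bm{\gamma}-\bm{\beta})_{i}+(A\partial_{\bm{y}})_{i}$ from local nilpotence, then the identification $\widetilde{\Lambda}\cap A_{\C}^{-1}(\bm{\beta})=\Lambda$ and reduction of the equations on $\Lambda$. Your top-degree argument gives injectivity directly, which is all that is needed. The paper instead writes out the finite Neumann series, which you also mention as an alternative.
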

\begin{proof}
For every $\bm{\gamma}\in\widetilde{\Lambda}$, coefficient comparison for the Euler operator $(A\theta_{\bm{x}})_{i}-\beta_{i}$ gives
\begin{equation}
\label{eq:ambient-Euler-coefficient-equations}
\left((A\bm{\gamma}-\bm{\beta})_{i}+(A\partial_{\bm{y}})_{i}\right)\mathbin{\bullet}_{\bm{y}}r_{\Lambda,\bm{\gamma}}=0\quad(i=1,\dots,d).
\end{equation}
Suppose that $A\bm{\gamma}\neq\bm{\beta}$, and choose $i$ such that $a:=(A\bm{\gamma}-\bm{\beta})_{i}\neq 0$.
Put $q:=(A\partial_{\bm{y}})_{i}$.
The operator $q\mathbin{\bullet}_{\bm{y}}$ lowers polynomial degree and is therefore locally nilpotent on $\C[\bm{y}]$.
Consequently, $a+q$ acts invertibly on $\C[\bm{y}]$.
Indeed, if $p\in\C[\bm{y}]$ has degree at most $m$, then
\[
(a+q)^{-1}\mathbin{\bullet}_{\bm{y}}p=a^{-1}\sum_{k=0}^{m}\left(-a^{-1}q\mathbin{\bullet}_{\bm{y}}\right)^{k}p.
\]
The $i$-th equation in \eqref{eq:ambient-Euler-coefficient-equations} therefore implies $r_{\Lambda,\bm{\gamma}}=0$.
Hence
\[
\Supp(\varphi_{\Lambda})\subset\widetilde{\Lambda}\cap A_{\C}^{-1}(\bm{\beta})=\Lambda,
\]
where the last equality follows from \eqref{eq:ambient-components-indexed-by-L-cosets}.
For $\bm{\gamma}\in\Lambda$, one has $A\bm{\gamma}=\bm{\beta}$.
Thus \eqref{eq:ambient-Euler-coefficient-equations} reduces to \eqref{eq:raw-Euler-equations}.
\end{proof}
It follows from \cref{prop:ambient-coefficient-criterion-Euler-support-reduction} that
\begin{equation}
\label{eq:all-lattice-ambient-solution-decomposition}
\Sol_{\bm{\beta}}^{\mathrm{amb}}\bigl(H_{A}(\bm{\beta})\bigr)=\bigoplus_{\Lambda\in\mathfrak{L}_{\bm{\beta}}}\Sol_{\Lambda}^{\mathrm{form}}\bigl(H_{A}(\bm{\beta})\bigr),
\end{equation}
where
\begin{equation}
\label{eq:classwise-formal-solution-space}
\Sol_{\Lambda}^{\mathrm{form}}\bigl(H_{A}(\bm{\beta})\bigr):=\left\{\varphi\in\mathscr{F}_{\widetilde{\Lambda}}^{\log}[\Lambda]\,\middle|\,H_{A}(\bm{\beta})\mathbin{\bullet}\varphi=0\right\}.
\end{equation}
\begin{definition}[\texorpdfstring{$\bm{w}$}{w}-directed classwise formal solution]
\label{def:w-directed-classwise-formal-solution}
Let $\Lambda\in\mathfrak{L}_{\bm{\beta}}$.
A nonzero element $\varphi_{\Lambda}\in\Sol_{\Lambda}^{\mathrm{form}}\bigl(H_{A}(\bm{\beta})\bigr)$ is said to be \emph{$\bm{w}$-directed} if $\Supp(\varphi_{\Lambda})$ has a $\bm{w}$-minimum with respect to $\leq_{\bm{w},\Lambda}$.
The zero solution is also declared to be $\bm{w}$-directed.
\end{definition}
If $\varphi_{\Lambda}$ is nonzero and $\bm{w}$-directed, its unique $\bm{w}$-minimum is called the \emph{lowest exponent of $\varphi_{\Lambda}$ in the direction $\bm{w}$} on $\Lambda$ and is denoted by
\[
\lexp_{\bm{w}}(\varphi_{\Lambda}):=\min_{\leq_{\bm{w},\Lambda}}\Supp(\varphi_{\Lambda}).
\]
Define the space of $\bm{w}$-directed classwise formal logarithmic solutions supported on $\Lambda$ by
\begin{equation}
\label{eq:classwise-directed-formal-solution-space}
\Sol_{\bm{w},\Lambda}^{\mathrm{dir}}\bigl(H_{A}(\bm{\beta})\bigr):=\left\{\varphi_{\Lambda}\in\Sol_{\Lambda}^{\mathrm{form}}\bigl(H_{A}(\bm{\beta})\bigr)\,\middle|\,\varphi_{\Lambda}\text{ is $\bm{w}$-directed}
\right\}.
\end{equation}
Define the all-lattice directed formal logarithmic solution space by
\begin{equation}
\label{eq:all-lattice-classwise-directed-solution-space}
\Sol_{\bm{\beta},\bm{w}}^{\mathrm{dir,all}}\bigl(H_{A}(\bm{\beta})\bigr):=\bigoplus_{\Lambda\in\mathfrak{L}_{\bm{\beta}}}\Sol_{\bm{w},\Lambda}^{\mathrm{dir}}\bigl(H_{A}(\bm{\beta})\bigr).
\end{equation}
We now fix $\Lambda\in\mathfrak{L}_{\bm{\beta}}$ for the remainder of this section and write
\begin{equation}
\label{eq:classwise-formal-logarithmic-expression}
\varphi=\sum_{\bm{\gamma}\in\Lambda}\bm{x}^{\bm{\gamma}}r_{\bm{\gamma}}(\log\bm{x})\in\mathscr{F}_{\widetilde{\Lambda}}^{\log}[\Lambda].
\end{equation}
For $\bm{\alpha}=(\alpha_{1},\dots,\alpha_{n})^{T}\in\C^{n}$ and $\bm{\nu}=(\nu_{1},\dots,\nu_{n})^{T}\in\N^{n}$, put 
\begin{equation}
\label{eq:falling-factorial-differential-operator}
[\partial_{\bm{y}}+\bm{\alpha}]_{\bm{\nu}}:=\prod_{j=1}^{n}\prod_{k=0}^{\nu_{j}-1}(\partial_{y_{j}}+\alpha_{j}-k)\in S,
\end{equation}
where an empty product is understood to be $1$.
For $\bm{\gamma},\bm{\gamma}'\in\Lambda$, define the \emph{raw transit operator} from $\bm{\gamma}$ to $\bm{\gamma}'$ by
\begin{equation}
\label{eq:raw-transit-operator}
d_{\bm{\gamma}'\leftarrow\bm{\gamma}}:=[\partial_{\bm{y}}+\bm{\gamma}]_{(\bm{\gamma}-\bm{\gamma}')_{+}}\in S.
\end{equation}
This definition is valid because $\bm{\gamma}-\bm{\gamma}'\in L\subset\Z^{n}$.
Direct coefficient comparison for the toric operators, as in \cite[Lemma~4.1]{Log3}, gives
\begin{equation}
\label{eq:raw-transit-equations}
d_{\bm{\gamma}'\leftarrow\bm{\gamma}}\mathbin{\bullet}_{\bm{y}}r_{\bm{\gamma}}=d_{\bm{\gamma}\leftarrow\bm{\gamma}'}\mathbin{\bullet}_{\bm{y}}r_{\bm{\gamma}'}\quad(\bm{\gamma},\bm{\gamma}'\in\Lambda).
\end{equation}
Consequently, \eqref{eq:classwise-formal-logarithmic-expression} is annihilated by $H_{A}(\bm{\beta})$ if and only if its coefficients satisfy \eqref{eq:raw-Euler-equations} and \eqref{eq:raw-transit-equations}.
\subsection{The directional normalized coefficient system}
\label{subsec:directed-classwise-solutions-directional-vanishing}
We now normalize the raw coefficient equations on the exponent lattice selected by the Euler equations and incorporate directional vanishing into the resulting finite system.
For the classwise expression \eqref{eq:classwise-formal-logarithmic-expression}, write
\[
\bm{r}=(r_{\bm{\gamma}})_{\bm{\gamma}\in\Lambda}=\Psi_{\widetilde{\Lambda}}(\varphi)\in\mathscr{C}_{\widetilde\Lambda}[\Lambda].
\]
For brevity, write $I_{\bm{\gamma}} := \nsupp(\bm{\gamma})$ for $\bm{\gamma}\in\Lambda$.
For $K\subset\{1,\dots,n\}$, put
\[
\partial_{\bm{y}}^{K}:=\prod_{j\in K}\partial_{y_{j}},\quad\partial_{\bm{y}}^{\emptyset}:=1.
\]

Let $\bm{\gamma},\bm{\gamma}'\in\Lambda$.
Recall that $d_{\bm{\gamma}'\leftarrow\bm{\gamma}} = [\partial_{\bm{y}}+\bm{\gamma}]_{(\bm{\gamma}-\bm{\gamma}')_{+}}$.
By the coordinatewise factorization of the falling factorial operators, the squarefree monomial $\partial_{\bm{y}}^{ I_{\bm{\gamma}'}\setminus I_{\bm{\gamma}} }$ divides $d_{\bm{\gamma}'\leftarrow\bm{\gamma}}$ in $S$.

For $\bm{\gamma},\bm{\gamma}'\in\Lambda$, define
\begin{equation}
\label{eq:reduced-transit-operator}
\widetilde d_{\bm{\gamma}'\leftarrow\bm{\gamma}}:=\frac{d_{\bm{\gamma}'\leftarrow\bm{\gamma}}}{\partial_{\bm{y}}^{I_{\bm{\gamma}'}\setminus I_{\bm{\gamma}}}}\in S.
\end{equation}

The following factorization and cocycle properties are the coordinatewise normalization identities used below.

\begin{lemma}[Reduced transit identities]
\label{lem:reduced-transit-identities}
Let $\bm{\gamma},\bm{\gamma}',\bm{\gamma}''\in\Lambda$.
Then the following statements hold:
\begin{enumerate}[\rm (1)]
\item The action
$\widetilde d_{\bm{\gamma}'\leftarrow\bm{\gamma}} \mathbin{\bullet}_{\bm{y}}: \C[\bm{y}] \longrightarrow \C[\bm{y}]$
is a $\C$-linear automorphism.
\item One has the reduced cocycle identity
\[
\widetilde d_{\bm{\gamma}\leftarrow\bm{\gamma}''}\widetilde d_{\bm{\gamma}''\leftarrow\bm{\gamma}'}\widetilde d_{\bm{\gamma}'\leftarrow\bm{\gamma}}=\widetilde d_{\bm{\gamma}\leftarrow\bm{\gamma}'}\widetilde d_{\bm{\gamma}'\leftarrow\bm{\gamma}''}\widetilde d_{\bm{\gamma}''\leftarrow\bm{\gamma}}.
\]
\end{enumerate}
\end{lemma}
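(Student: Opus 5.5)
Both statements reduce to the coordinatewise factorization of the falling factorial operators, so I would first write everything coordinate by coordinate. Put $\bm{u}:=\bm{\gamma}-\bm{\gamma}'\in L$. Then
\[
d_{\bm{\gamma}'\leftarrow\bm{\gamma}}=\prod_{j=1}^{n}\prod_{k=0}^{(u_j)_+-1}(\partial_{y_j}+\gamma_j-k).
\]
A factor $\partial_{y_j}+\gamma_j-k$ is a pure monomial $\partial_{y_j}$ exactly when $\gamma_j=k$, which requires $\gamma_j\in\N$ with $0\le \gamma_j\le u_j-1$. This is equivalent to $j\notin I_{\bm{\gamma}}$, $\gamma_j\in\Z$ and $\gamma'_j=\gamma_j-u_j<0$, that is, $j\in I_{\bm{\gamma}'}\setminus I_{\bm{\gamma}}$. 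For each coordinate $j$ at most one $k$ gives such a factor. Hence $\widetilde d_{\bm{\gamma}'\leftarrow\bm{\gamma}}$ is exactly the product of the remaining factors $\partial_{y_j}+c$ with $c\in\C\setminus\{0\}$; this is the divisibility already asserted before \eqref{eq:reduced-transit-operator}, made precise.

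For (1), each remaining factor acts on $\C[\bm{y}]$ as $c+\partial_{y_j}$ with $c\neq0$. Since $\partial_{y_j}$ is locally nilpotent, each such factor is invertible, exactly as in the proof of \cref{prop:ambient-coefficient-criterion-Euler-support-reduction}. A product of automorphisms is an automorphism.

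For (2), I would first use the unreduced cocycle identity $d_{\bm{\gamma}\leftarrow\bm{\gamma}''}d_{\bm{\gamma}''\leftarrow\bm{\gamma}'}d_{\bm{\gamma}'\leftarrow\bm{\gamma}}=d_{\bm{\gamma}\leftarrow\bm{\gamma}'}d_{\bm{\gamma}'\leftarrow\bm{\gamma}''}d_{\bm{\gamma}''\leftarrow\bm{\gamma}}$. Since $S$ is commutative and everything factors over $j$, it suffices to check one variable. Write $t=\partial_{y_j}$ and let $a,b,c$ be the $j$-th coordinates of $\bm{\gamma},\bm{\gamma}',\bm{\gamma}''$, with pairwise integral differences. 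The left side is the product of $(t+a-k)$ over the integer steps descending from $a$ to $b$ when $a>b$, then from $b$ to $c$, then from $c$ to $a$. Writing $(t+x)$ for the linear factor at level $x$, the operator $d_{\bm{\gamma}'\leftarrow\bm{\gamma}}$ contributes the factors at levels $x\in\{b+1,\dots,a\}$ when $a>b$. Each cyclic traversal therefore contributes, for each level $x$ in the integral interval $(\min,\max]$ of $\{a,b,c\}$, exactly the number of times the cycle descends through $x$. For a closed cycle on a line, this equals the number of times it ascends through $x$. The reverse cycle descends where the original ascends, so the two multisets of linear factors agree. I would make this rigorous by sorting $a,b,c$ and checking the cases; by symmetry of the statement under relabeling, essentially two configurations suffice.

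Finally, I would divide out the monomials. The unreduced identity holds in the domain $S$, so it suffices to show that the monomial denominators agree on both sides:
\[
\partial^{I_{\bm{\gamma}}\setminus I_{\bm{\gamma}''}}\partial^{I_{\bm{\gamma}''}\setminus I_{\bm{\gamma}'}}\partial^{I_{\bm{\gamma}'}\setminus I_{\bm{\gamma}}}
=\partial^{I_{\bm{\gamma}}\setminus I_{\bm{\gamma}'}}\partial^{I_{\bm{\gamma}'}\setminus I_{\bm{\gamma}''}}\partial^{I_{\bm{\gamma}''}\setminus I_{\bm{\gamma}}}.
\]
In coordinate $j$, the exponent of $\partial_{y_j}$ on each side counts the transitions of the cyclic sequence of membership indicators, in one orientation, from "not in $I$" to "in $I$". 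For a cycle of length three these counts are equal in both directions: each is $1$ if the indicators are not all equal and $0$ otherwise. Then cancellation in the domain $S$ gives the reduced identity. The only real obstacle is the bookkeeping in the one-variable unreduced cocycle identity; the rest is formal.
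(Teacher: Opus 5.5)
Your proposal is correct and follows essentially the same route as the paper: you identify the unique pure $\partial_{y_j}$ factor (present exactly when $j\in I_{\bm{\gamma}'}\setminus I_{\bm{\gamma}}$), get invertibility from nonzero constant terms and local nilpotence, prove the raw cocycle identity by equality of the multisets of affine factors, and cancel the agreeing squarefree monomials in the domain $S$. Your descent/ascent crossing count and the three-term transition count spell out the two steps that the paper asserts only as a ``direct comparison.''
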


\begin{proof}
The corresponding identities for lattice shifts from a fixed fake exponent are proved in \cite[Lemma~4.2]{Log3}.
We give the proof in the present notation because the current formulation is indexed directly by arbitrary pairs of exponents in $\Lambda$, and the identities themselves require neither the homogeneity of $A$ nor the choice of a fake exponent.
Fix a coordinate $\nu\in[n]$.
The factor $\partial_{y_{\nu}}$ occurs in the $\nu$-th factor of $d_{\bm{\gamma}'\leftarrow\bm{\gamma}}$ if and only if there is an integer $\mu$ with $1\leq\mu\leq\gamma_{\nu}-\gamma'_{\nu}$ and $\gamma_{\nu}-\mu+1=0$.
Because $\gamma_{\nu}-\gamma'_{\nu}\in\Z$, this is equivalent to $\gamma_{\nu}\in\N$ and $\gamma'_{\nu}\in\Z_{<0}$, hence to $\nu\in I_{\bm{\gamma}'}\setminus I_{\bm{\gamma}}$.
Such a factor occurs at most once.
After the factors indexed by $I_{\bm{\gamma}'}\setminus I_{\bm{\gamma}}$ are removed, every remaining affine factor has nonzero constant term.
This proves the factorization in \eqref{eq:reduced-transit-operator} and shows that $\widetilde d_{\bm{\gamma}'\leftarrow\bm{\gamma}}$ has nonzero constant term.
Write $\widetilde d_{\bm{\gamma}'\leftarrow\bm{\gamma}}=a+q$, where $a\in\C^{\times}$ and $q$ has positive degree in the variables $\partial_{y_{j}}$.
On each finite-dimensional space of polynomials of degree at most $m$, the operator $q\mathbin{\bullet}_{\bm{y}}$ is nilpotent, so
\[
\left(\widetilde d_{\bm{\gamma}'\leftarrow\bm{\gamma}}\mathbin{\bullet}_{\bm{y}}\right)^{-1}=a^{-1}\sum_{k=0}^{m}(-a^{-1}q\mathbin{\bullet}_{\bm{y}})^{k}.
\]
These inverses are compatible as $m$ varies, proving that the action on $\C[\bm{y}]$ is an automorphism.
For the cocycle identity, fix $\nu$ and put $a=\gamma_{\nu}$, $b=\gamma'_{\nu}$, and $c=\gamma''_{\nu}$.
The differences $a-b$, $b-c$, and $c-a$ are integers with sum zero.
A direct comparison of the consecutive affine factors in $\partial_{y_{\nu}}$ shows that
\[
d_{\bm{\gamma}\leftarrow\bm{\gamma}''}d_{\bm{\gamma}''\leftarrow\bm{\gamma}'}d_{\bm{\gamma}'\leftarrow\bm{\gamma}}=d_{\bm{\gamma}\leftarrow\bm{\gamma}'}d_{\bm{\gamma}'\leftarrow\bm{\gamma}''}d_{\bm{\gamma}''\leftarrow\bm{\gamma}}
\]
coordinatewise, because the two sides have the same multiset of affine factors.
Taking the product over $\nu$ gives the raw identity.
The squarefree monomial factors on its two sides agree, and cancellation yields the asserted reduced cocycle identity.
\end{proof}

Fix a reference vector $\bm{\lambda}\in\Lambda$.
For a raw coefficient family
$\bm{r} = (r_{\bm{\gamma}})_{\bm{\gamma}\in\Lambda} \in \mathscr{C}_{\widetilde\Lambda}[\Lambda]$,
define its normalized coefficient at $\bm{\gamma}\in\Lambda$ by
\begin{equation}
\label{eq:normalized-coefficient}
c_{\bm{\gamma}}^{(\bm{\lambda})}
:=
\left(
\widetilde d_{\bm{\gamma}\leftarrow\bm{\lambda}}
\mathbin{\bullet}_{\bm{y}}
\right)^{-1}
\left(
\widetilde d_{\bm{\lambda}\leftarrow\bm{\gamma}}
\mathbin{\bullet}_{\bm{y}}
r_{\bm{\gamma}}
\right).
\end{equation}
This is well defined by \cref{lem:reduced-transit-identities}.

Equivalently, the raw coefficient is reconstructed from the normalized coefficient by
\begin{equation}
\label{eq:raw-coefficient-reconstruction}
r_{\bm{\gamma}}
=
\left(
\widetilde d_{\bm{\lambda}\leftarrow\bm{\gamma}}
\mathbin{\bullet}_{\bm{y}}
\right)^{-1}
\left(
\widetilde d_{\bm{\gamma}\leftarrow\bm{\lambda}}
\mathbin{\bullet}_{\bm{y}}
c_{\bm{\gamma}}^{(\bm{\lambda})}
\right).
\end{equation}

For a normalized family $\bm{c}=(c_{\Lambda,I})_{I\in\NS(\Lambda)}$, define its active negative-support family by
\[
\Act_{\Lambda}(\bm{c}):=\{I\in\NS(\Lambda)\mid c_{\Lambda,I}\neq 0\}.
\]
We say that $I\in\NS(\Lambda)$ is \emph{$\bm{w}$-acceptable} if every $J\in\NS(\Lambda)$ satisfying $J\subset I$ belongs to $\mathcal{B}_{\bm{w}}(\Lambda)$.
Denote by $\mathcal{A}_{\bm{w}}(\Lambda)$ the set of $\bm{w}$-acceptable negative supports in $\NS(\Lambda)$.
By definition,
\begin{equation}
\label{eq:unacceptable-support-characterization}
I\notin\mathcal{A}_{\bm{w}}(\Lambda)\iff\text{there exists }J\in\mathcal{U}_{\bm{w}}(\Lambda)\text{ such that }J\subset I.
\end{equation}
In particular,
\[
\mathcal{A}_{\bm{w}}(\Lambda)\subset\mathcal{B}_{\bm{w}}(\Lambda).
\]

\begin{theorem}[Classwise directional coefficient characterization]
\label{thm:classwise-directional-coefficient-characterization}
Fix $\bm{\lambda}\in\Lambda$ and define normalized coefficients by \eqref{eq:normalized-coefficient}.
Let 
\[
\varphi=\sum_{\bm{\gamma}\in\Lambda}\bm{x}^{\bm{\gamma}}r_{\bm{\gamma}}(\log\bm{x})\in\mathscr{F}_{\widetilde{\Lambda}}^{\log}[\Lambda]
\]
be a formal logarithmic expression as in \eqref{eq:classwise-formal-logarithmic-expression}.
Then the following conditions are equivalent.
\begin{enumerate}[\rm (1)]
\item One has
\[
\varphi\in\Sol_{\bm{w},\Lambda}^{\mathrm{dir}}\bigl(H_{A}(\bm{\beta})\bigr).
\]
\item The normalized coefficient $c_{\bm{\gamma}}^{(\bm{\lambda})}$ depends only on $I_{\bm{\gamma}}=\nsupp(\bm{\gamma})$, hence can be denoted by $c_{\Lambda,I}$ for $I\in\NS(\Lambda)$.
The resulting finite family $(c_{\Lambda,I})_{I\in\NS(\Lambda)}$ satisfies
\begin{equation}
\label{eq:normalized-Euler-equation}
(A\partial_{\bm{y}})_{i}\mathbin{\bullet}_{\bm{y}}c_{\Lambda,I}=0\quad(I\in\NS(\Lambda),\ i=1,\dots,d),
\end{equation}
\begin{equation}
\label{eq:normalized-transit-equation}
\partial_{\bm{y}}^{J\setminus I}\mathbin{\bullet}_{\bm{y}}c_{\Lambda,I}=\partial_{\bm{y}}^{I\setminus J}\mathbin{\bullet}_{\bm{y}}c_{\Lambda,J}\quad(I,J\in\NS(\Lambda)),
\end{equation}
and
\begin{equation}
\label{eq:directional-normalized-vanishing}
c_{\Lambda,I}=0\quad(I\in\NS(\Lambda)\setminus\mathcal{A}_{\bm{w}}(\Lambda)).
\end{equation}
\end{enumerate}
Under these equivalent conditions, \eqref{eq:raw-coefficient-reconstruction} recovers the unique raw coefficient family, and
\begin{equation}
\label{eq:support-as-union-of-active-strata}
\Supp(\varphi)=\coprod_{I\in\Act_{\Lambda}(\bm{c})}\Lambda_{I},\quad\Act_{\Lambda}(\bm{c})\subset\mathcal{A}_{\bm{w}}(\Lambda).
\end{equation}
If $\varphi\neq 0$, then
\begin{equation}
\label{eq:lowest-exponent-from-active-stratum-minima}
\lexp_{\bm{w}}(\varphi)=\min_{\leq_{\bm{w},\Lambda}}\{\bm{v}_{\Lambda,I}\mid I\in\Act_{\Lambda}(\bm{c})\}.
\end{equation}
\end{theorem}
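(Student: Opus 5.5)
The plan is to translate the raw characterization (\eqref{eq:raw-Euler-equations} together with \eqref{eq:raw-transit-equations}) into normalized coefficients using \cref{lem:reduced-transit-identities}. After that, the directional condition becomes a statement about which strata are active.

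All operators involved lie in the commutative ring $S$. By \cref{lem:reduced-transit-identities}(1), every $\widetilde d_{\bm{\gamma}'\leftarrow\bm{\gamma}}\mathbin{\bullet}_{\bm{y}}$ is an automorphism of $\C[\bm{y}]$. So \eqref{eq:normalized-coefficient} and \eqref{eq:raw-coefficient-reconstruction} are mutually inverse bijections between raw and normalized families, and $r_{\bm{\gamma}}=0$ if and only if $c^{(\bm{\lambda})}_{\bm{\gamma}}=0$.

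\textbf{Euler equations.} The operator $(A\partial_{\bm{y}})_{i}$ commutes with the invertible operators in \eqref{eq:normalized-coefficient}. Hence \eqref{eq:raw-Euler-equations} at $\bm{\gamma}$ is equivalent to $(A\partial_{\bm{y}})_{i}\mathbin{\bullet}_{\bm{y}}c^{(\bm{\lambda})}_{\bm{\gamma}}=0$.

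\textbf{Transit equations.} Write $d_{\bm{\gamma}'\leftarrow\bm{\gamma}}=\partial_{\bm{y}}^{I_{\bm{\gamma}'}\setminus I_{\bm{\gamma}}}\widetilde d_{\bm{\gamma}'\leftarrow\bm{\gamma}}$ and substitute \eqref{eq:raw-coefficient-reconstruction} into \eqref{eq:raw-transit-equations}. Then apply the invertible operator $\widetilde d_{\bm{\lambda}\leftarrow\bm{\gamma}}\widetilde d_{\bm{\lambda}\leftarrow\bm{\gamma}'}$ to both sides. The reduced cocycle identity of \cref{lem:reduced-transit-identities}(2), for the triple $(\bm{\gamma},\bm{\gamma}',\bm{\lambda})$, should show that the two sides carry the same invertible factor, namely
\[
\widetilde d_{\bm{\gamma}\leftarrow\bm{\lambda}}\widetilde d_{\bm{\lambda}\leftarrow\bm{\gamma}'}\widetilde d_{\bm{\gamma}'\leftarrow\bm{\gamma}}=\widetilde d_{\bm{\gamma}\leftarrow\bm{\gamma}'}\widetilde d_{\bm{\gamma}'\leftarrow\bm{\lambda}}\widetilde d_{\bm{\lambda}\leftarrow\bm{\gamma}}
\]
(up to the obvious rearrangement). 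Cancelling this factor, \eqref{eq:raw-transit-equations} becomes equivalent to
\[
\partial_{\bm{y}}^{I_{\bm{\gamma}'}\setminus I_{\bm{\gamma}}}\mathbin{\bullet}_{\bm{y}}c^{(\bm{\lambda})}_{\bm{\gamma}}=\partial_{\bm{y}}^{I_{\bm{\gamma}}\setminus I_{\bm{\gamma}'}}\mathbin{\bullet}_{\bm{y}}c^{(\bm{\lambda})}_{\bm{\gamma}'} .
\]
Getting the bookkeeping of this cancellation right is the step I expect to be the main obstacle. I would first check it one coordinate at a time, as in the proof of \cref{lem:reduced-transit-identities}.

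\textbf{Dependence on strata only.} If $I_{\bm{\gamma}}=I_{\bm{\gamma}'}$, the displayed equation reads $c^{(\bm{\lambda})}_{\bm{\gamma}}=c^{(\bm{\lambda})}_{\bm{\gamma}'}$. So normalized coefficients are constant on strata, and the displayed equation is exactly \eqref{eq:normalized-transit-equation}. Conversely, any stratum-constant family satisfying \eqref{eq:normalized-Euler-equation} and \eqref{eq:normalized-transit-equation} reverses these steps and gives a solution in $\Sol_{\Lambda}^{\mathrm{form}}$.

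\textbf{Support formula.} For any such solution, $\Supp(\varphi)=\coprod_{I\in\Act_{\Lambda}(\bm{c})}\Lambda_{I}$, since $r_{\bm{\gamma}}=0\iff c^{(\bm{\lambda})}_{\bm{\gamma}}=0$.

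\textbf{Directional condition.} Two observations handle this.

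First, take $J\subset I$ in $\NS(\Lambda)$. Then \eqref{eq:normalized-transit-equation} reads $c_{\Lambda,I}=\partial_{\bm{y}}^{I\setminus J}\mathbin{\bullet}_{\bm{y}}c_{\Lambda,J}$. Hence if $I$ is active, every $J\in\NS(\Lambda)$ with $J\subset I$ is active; that is, $\Act_{\Lambda}(\bm{c})$ is closed under passing to subsets within $\NS(\Lambda)$.

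Second, a finite union of strata has a $\bm{w}$-minimum if and only if each of those strata is $\bm{w}$-bounded below. This follows from \cref{prop:directional-boundedness-stratum-minima}: each bounded stratum has a minimum $\bm{v}_{\Lambda,I}$, and the union has a minimum exactly when no stratum in it is unbounded.

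\textbf{Proof of (1)$\Rightarrow$(2).} Directedness forces $\Act_{\Lambda}(\bm{c})\subset\mathcal{B}_{\bm{w}}(\Lambda)$. By the subset-closure just shown, this gives $\Act_{\Lambda}(\bm{c})\subset\mathcal{A}_{\bm{w}}(\Lambda)$, which is \eqref{eq:directional-normalized-vanishing}.

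\textbf{Proof of (2)$\Rightarrow$(1).} Condition \eqref{eq:directional-normalized-vanishing} gives $\Act_{\Lambda}(\bm{c})\subset\mathcal{A}_{\bm{w}}(\Lambda)\subset\mathcal{B}_{\bm{w}}(\Lambda)$. So the support is a finite union of bounded strata and therefore has a $\bm{w}$-minimum.

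\textbf{Final formulas.} The same reasoning yields \eqref{eq:support-as-union-of-active-strata}. It also identifies $\lexp_{\bm{w}}(\varphi)$ as the smallest of the stratum minima $\bm{v}_{\Lambda,I}$ over active $I$, which is \eqref{eq:lowest-exponent-from-active-stratum-minima}. Uniqueness of the raw family is the bijectivity noted at the start.
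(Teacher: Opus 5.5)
Your proposal is correct and follows essentially the same route as the paper. The cancellation you flagged does go through: after applying $\widetilde d_{\bm{\lambda}\leftarrow\bm{\gamma}}\widetilde d_{\bm{\lambda}\leftarrow\bm{\gamma}'}$, the two sides carry exactly the two sides of \cref{lem:reduced-transit-identities}(2) with $\bm{\gamma}''=\bm{\lambda}$. Your subset-closure of $\Act_{\Lambda}(\bm{c})$, via $c_{\Lambda,I}=\partial_{\bm{y}}^{I\setminus J}\mathbin{\bullet}_{\bm{y}}c_{\Lambda,J}$ for $J\subset I$, is just the contrapositive of the paper's step of choosing an unbounded $J\subset I$.
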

\begin{proof}
For the classwise expression \eqref{eq:classwise-formal-logarithmic-expression}, the equation
\[
H_{A}(\bm{\beta})\mathbin{\bullet}\varphi=0
\]
is equivalent to the raw Euler equations \eqref{eq:raw-Euler-equations} and the raw transit equations \eqref{eq:raw-transit-equations}.
For $\bm{\gamma},\bm{\gamma}'\in\Lambda$, factor the raw transit operators as
\[
d_{\bm{\gamma}'\leftarrow\bm{\gamma}}=\widetilde d_{\bm{\gamma}'\leftarrow\bm{\gamma}}\partial_{\bm{y}}^{I_{\bm{\gamma}'}\setminus I_{\bm{\gamma}}},\quad d_{\bm{\gamma}\leftarrow\bm{\gamma}'}=\widetilde d_{\bm{\gamma}\leftarrow\bm{\gamma}'}\partial_{\bm{y}}^{I_{\bm{\gamma}}\setminus I_{\bm{\gamma}'}}.
\]
Substituting \eqref{eq:raw-coefficient-reconstruction} and using the reduced cocycle identity shows that the raw transit equation is equivalent to
\[
\partial_{\bm{y}}^{I_{\bm{\gamma}'}\setminus I_{\bm{\gamma}}}\mathbin{\bullet}_{\bm{y}}c_{\bm{\gamma}}^{(\bm{\lambda})}=\partial_{\bm{y}}^{I_{\bm{\gamma}}\setminus I_{\bm{\gamma}'}}\mathbin{\bullet}_{\bm{y}}c_{\bm{\gamma}'}^{(\bm{\lambda})}.
\]
Indeed, the common reduced factor has nonzero constant term and hence acts invertibly by \cref{lem:reduced-transit-identities}.
If $I_{\bm{\gamma}}=I_{\bm{\gamma}'}$, this equality gives
$c_{\bm{\gamma}}^{(\bm{\lambda})}=c_{\bm{\gamma}'}^{(\bm{\lambda})}$.
Thus the normalized coefficient depends only on the negative support, and the last displayed equality becomes \eqref{eq:normalized-transit-equation}.
The normalizing operators have constant coefficients and commute with every $(A\partial_{\bm{y}})_{i}$, so the raw Euler equations are equivalent to \eqref{eq:normalized-Euler-equation}.

For $\bm{\gamma}\in\Lambda_{I}$, the reconstruction formula is a composite of invertible constant-coefficient operators.
Consequently,
\[
r_{\bm{\gamma}}=0\iff c_{\Lambda,I}=0,
\]
which proves the support formula in \eqref{eq:support-as-union-of-active-strata}.
Because $\varphi$ is $\bm{w}$-directed, no $\bm{w}$-unbounded stratum can be active: otherwise that entire stratum would lie in $\Supp(\varphi)$ and would contain vectors in $\Lambda$ strictly below the lowest exponent of $\varphi$.
Hence we have
\[
c_{\Lambda,J}=0\quad (J\in\mathcal{U}_{\bm{w}}(\Lambda)).
\]
Let $I\notin\mathcal{A}_{\bm{w}}(\Lambda)$.
By \eqref{eq:unacceptable-support-characterization}, choose $J\in\mathcal{U}_{\bm{w}}(\Lambda)$ with $J\subset I$.
The normalized transit equation for $(J,I)$ becomes
\[
c_{\Lambda,I}=\partial_{\bm{y}}^{I\setminus J}\mathbin{\bullet}_{\bm{y}}c_{\Lambda,J}=0.
\]
This proves \eqref{eq:directional-normalized-vanishing} and the active-support inclusion in \eqref{eq:support-as-union-of-active-strata}.
Thus \textup{(1)} implies \textup{(2)}.

Conversely, assume \textup{(2)} and recover the raw coefficients by \eqref{eq:raw-coefficient-reconstruction}.
Reversing the normalization calculation gives the raw Euler and transit equations, so the coefficient comparison above yields $H_{A}(\bm{\beta})\mathbin{\bullet}\varphi=0$.
By \eqref{eq:directional-normalized-vanishing}, every active negative support is acceptable and therefore bounded.
If the family is zero, then $\varphi=0$ and it is directed by definition.
Otherwise, $\Act_{\Lambda}(\bm{c})$ is a finite nonempty acceptable negative-support family.
Since $\mathcal{A}_{\bm{w}}(\Lambda)\subset\mathcal{B}_{\bm{w}}(\Lambda)$, each active stratum $\Lambda_{I}$ has the unique $\bm{w}$-minimum $\bm{v}_{\Lambda,I}$.
Let $\bm{v}_{0}:=\min_{\leq_{\bm{w},\Lambda}}\left\{\bm{v}_{\Lambda,I}\,\middle|\,I\in\Act_{\Lambda}(\bm{c})\right\}$.
Choose $I_{0}\in\Act_{\Lambda}(\bm{c})$ such that $\bm{v}_{0}=\bm{v}_{\Lambda,I_{0}}$.
Then
\[
\bm{v}_{0}\in\Lambda_{I_{0}}\subset\Supp(\varphi)
\]
by \eqref{eq:support-as-union-of-active-strata}.
For any $\bm{\gamma}\in\Supp(\varphi)$, the same support formula gives an $I\in\Act_{\Lambda}(\bm{c})$ such that $\bm{\gamma}\in\Lambda_{I}$.
Hence
\[
\bm{v}_{0}\leq_{\bm{w},\Lambda}\bm{v}_{\Lambda,I}\leq_{\bm{w},\Lambda}\bm{\gamma}.
\]
Therefore, $\bm{v}_{0}$ is the $\bm{w}$-minimum of $\Supp(\varphi)$.
Thus $\varphi$ is $\bm{w}$-directed, and
\[
\lexp_{\bm{w}}(\varphi)=\bm{v}_{0}=\min_{\leq_{\bm{w},\Lambda}}\left\{\bm{v}_{\Lambda,I}\,\middle|\,I\in\Act_{\Lambda}(\bm{c})\right\}.
\]
Therefore \textup{(2)} implies \textup{(1)}.
Uniqueness of the raw family follows from the invertibility of the reduced transit operators.
\end{proof}

Define the directional normalized coefficient space by
\begin{equation}
\label{eq:classwise-directional-coefficient-space}
\Coeff_{\bm{w}}^{\mathrm{dir}}(\Lambda):=\left\{(c_{\Lambda,I})_{I\in\NS(\Lambda)}\,\middle|\,\eqref{eq:normalized-Euler-equation},\ \eqref{eq:normalized-transit-equation},\ \eqref{eq:directional-normalized-vanishing}\text{ hold}\right\}.
\end{equation}
\cref{thm:classwise-directional-coefficient-characterization} gives the normalization isomorphism
\begin{equation}
\label{eq:classwise-directional-normalization-isomorphism}
\Sol_{\bm{w},\Lambda}^{\mathrm{dir}}\bigl(H_{A}(\bm{\beta})\bigr)\xrightarrow{\sim}\Coeff_{\bm{w}}^{\mathrm{dir}}(\Lambda).
\end{equation}

We now relate acceptable supports to bounded inclusion-minimal negative supports.
Every $K\in\mathcal{A}_{\bm{w}}(\Lambda)$ contains a bounded inclusion-minimal negative support.
Indeed, the finite nonempty family $\{J\in\NS(\Lambda)\mid J\subset K\}$ has an inclusion-minimal member $I$; acceptability of $K$ makes $I$ $\bm{w}$-bounded, so $I\in\mathcal{M}_{\bm{w}}(\Lambda)$.

Every bounded inclusion-minimal negative support is acceptable.
Indeed, if $I\in\mathcal{M}_{\bm{w}}(\Lambda)$ and $J\in\NS(\Lambda)$ satisfies $J\subset I$, the minimality of $I$ gives $J=I$, which is $\bm{w}$-bounded.
Consequently,
\begin{equation}
\label{eq:minimal-support-acceptable-bounded-inclusions}
\mathcal{M}_{\bm{w}}(\Lambda)\subset\mathcal{A}_{\bm{w}}(\Lambda)\subset\mathcal{B}_{\bm{w}}(\Lambda).
\end{equation}
\subsubsection{The finite directional coefficient module}
\label{subsec:finite-directional-coefficient-module}
The normalized system already has only finitely many polynomial components, one for each negative support in $\NS(\Lambda)$.
We retain all of these components, including those indexed by nonminimal acceptable supports, because each component directly records the corresponding negative-support stratum and may detect its minimum as a realized exponent.
Define the finite free $S$-module
\[
F_{\Lambda}:=\bigoplus_{I\in\NS(\Lambda)}S e_{\Lambda,I}^{*}.
\]
Let $U_{\Lambda}^{\mathrm{Eul}}$, $U_{\Lambda}^{\mathrm{tr}}$, and $U_{\Lambda,\bm{w}}^{\mathrm{van}}$ be the submodules generated respectively by
\[
(A\partial_{\bm{y}})_{i} e_{\Lambda,I}^{*}\quad (I\in\NS(\Lambda),\ i=1,\dots,d),
\]
\[
\partial_{\bm{y}}^{J\setminus I}e_{\Lambda,I}^{*}-\partial_{\bm{y}}^{I\setminus J}e_{\Lambda,J}^{*}\quad (I,J\in\NS(\Lambda)),
\]
and
\[
e_{\Lambda,K}^{*}\quad (K\in\NS(\Lambda)\setminus\mathcal{A}_{\bm{w}}(\Lambda)).
\]
\begin{definition}[Finite directional coefficient module]
\label{def:directional-coefficient-module}
The \emph{directional relation submodule} is
\[
U_{\Lambda,\bm{w}}^{\mathrm{dir}}
:=
U_{\Lambda}^{\mathrm{Eul}}
+
U_{\Lambda}^{\mathrm{tr}}
+
U_{\Lambda,\bm{w}}^{\mathrm{van}}
\subset
F_{\Lambda},
\]
and the \emph{finite directional coefficient module} is
\begin{equation}
\label{eq:finite-directional-coefficient-module}
M_{\Lambda,\bm{w}}^{\mathrm{dir}}
:=
F_{\Lambda}/U_{\Lambda,\bm{w}}^{\mathrm{dir}}.
\end{equation}
\end{definition}
The module $M_{\Lambda,\bm{w}}^{\mathrm{dir}}$ is finitely presented because $\NS(\Lambda)$ is finite.
Evaluation on the residue classes of all negative-support generators gives a canonical isomorphism
\begin{equation}
\label{eq:directional-coefficient-space-Hom}
\Coeff_{\bm{w}}^{\mathrm{dir}}(\Lambda)\xrightarrow{\sim}\Hom_{S}\left(M_{\Lambda,\bm{w}}^{\mathrm{dir}},\C[\bm{y}]\right).
\end{equation}
Indeed, an $S$-linear map from $F_{\Lambda}$ is determined by the images of the generators $e_{\Lambda,I}^{*}$, and it factors through the quotient precisely when those images satisfy the normalized Euler, transit, and directional vanishing equations.
Combining \eqref{eq:classwise-directional-normalization-isomorphism} and \eqref{eq:directional-coefficient-space-Hom} proves the following result.
\begin{theorem}[Classwise directional coefficient-module presentation]
\label{thm:directed-coefficient-module-presentation}
For every reference vector $\bm{\lambda}\in\Lambda$, normalization induces linear isomorphisms
\begin{equation}
\label{eq:directed-classwise-solution-directional-module-Hom}
\Sol_{\bm{w},\Lambda}^{\mathrm{dir}}\bigl(H_{A}(\bm{\beta})\bigr)\cong\Coeff_{\bm{w}}^{\mathrm{dir}}(\Lambda)\cong\Hom_{S}\left(M_{\Lambda,\bm{w}}^{\mathrm{dir}},\C[\bm{y}]\right).
\end{equation}
Under these isomorphisms, a normalized family $(c_{\Lambda,I})_{I\in\NS(\Lambda)}$ is converted directly into raw coefficients by \eqref{eq:raw-coefficient-reconstruction} and then into a formal logarithmic series by \eqref{eq:classwise-formal-logarithmic-expression}.
\end{theorem}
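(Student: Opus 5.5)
The statement is essentially the composite of two isomorphisms already in place: the normalization isomorphism \eqref{eq:classwise-directional-normalization-isomorphism}, which comes from \cref{thm:classwise-directional-coefficient-characterization}, and the evaluation isomorphism \eqref{eq:directional-coefficient-space-Hom}. The plan is therefore to make each map explicit, check that it is a well-defined linear bijection, and record the inverse direction asserted in the last sentence of the theorem.

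\textbf{Step 1: the normalization map.} For fixed $\bm{\lambda}\in\Lambda$, send $\varphi\in\Sol_{\bm{w},\Lambda}^{\mathrm{dir}}(H_{A}(\bm{\beta}))$ with raw family $\bm{r}=\Psi_{\widetilde\Lambda}(\varphi)$ to $(c_{\Lambda,I})_{I\in\NS(\Lambda)}$, defined by \eqref{eq:normalized-coefficient}.
\begin{enumerate}[\rm (a)]
\item \emph{Linearity.} The map is $\C$-linear because each $\widetilde d$-action and its inverse is linear, by \cref{lem:reduced-transit-identities}(1).
\item \emph{Well-definedness and target.} The direction (1)$\Rightarrow$(2) of \cref{thm:classwise-directional-coefficient-characterization} shows that $c^{(\bm{\lambda})}_{\bm{\gamma}}$ depends only on $I_{\bm{\gamma}}$, and that the family lies in $\Coeff_{\bm{w}}^{\mathrm{dir}}(\Lambda)$.
\item \emph{Injectivity.} This follows from the reconstruction formula \eqref{eq:raw-coefficient-reconstruction}: the raw family is determined by the normalized one.
\item \emph{Surjectivity.} Given any family in $\Coeff_{\bm{w}}^{\mathrm{dir}}(\Lambda)$, define $r_{\bm{\gamma}}$ by \eqref{eq:raw-coefficient-reconstruction} with $c_{\bm{\gamma}}:=c_{\Lambda,I_{\bm{\gamma}}}$. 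Form $\varphi$ via $\Phi_{\widetilde\Lambda}$, which is \eqref{eq:classwise-formal-logarithmic-expression}. The direction (2)$\Rightarrow$(1) of the same theorem gives $\varphi\in\Sol^{\mathrm{dir}}_{\bm{w},\Lambda}$.
\end{enumerate}
The normalization of this $\varphi$ returns the original family, since \eqref{eq:normalized-coefficient} and \eqref{eq:raw-coefficient-reconstruction} are mutually inverse by construction. This also yields the explicit description of the inverse stated in the theorem.

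\textbf{Step 2: the evaluation map.} Send $f\in\Hom_{S}(M_{\Lambda,\bm{w}}^{\mathrm{dir}},\C[\bm{y}])$ to $(f(\overline{e^{*}_{\Lambda,I}}))_{I}$.
\begin{enumerate}[\rm (a)]
\item \emph{Hom from a quotient.} By the universal property of the free module $F_\Lambda$ and of the quotient, $\Hom_S(F_\Lambda/U,\C[\bm{y}])$ equals the set of $S$-linear maps $F_\Lambda\to\C[\bm{y}]$ killing $U=U_{\Lambda,\bm{w}}^{\mathrm{dir}}$.
\item \emph{Generators of $U$.} Such a map is determined by the tuple of images of the generators $e^{*}_{\Lambda,I}$. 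It kills $U$ if and only if it kills the listed generators of $U^{\mathrm{Eul}}_{\Lambda}$, $U^{\mathrm{tr}}_{\Lambda}$ and $U^{\mathrm{van}}_{\Lambda,\bm{w}}$.
\item \emph{Matching the equations.} By $S$-linearity, killing those generators means exactly that the tuple satisfies \eqref{eq:normalized-Euler-equation}, \eqref{eq:normalized-transit-equation} and \eqref{eq:directional-normalized-vanishing}. Hence evaluation is a linear bijection onto $\Coeff_{\bm{w}}^{\mathrm{dir}}(\Lambda)$.
\end{enumerate}

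\textbf{Composition.} Composing the two bijections gives \eqref{eq:directed-classwise-solution-directional-module-Hom}. Under this composite, a normalized family is turned into raw coefficients by \eqref{eq:raw-coefficient-reconstruction} and then into a series by $\Phi_{\widetilde\Lambda}$, as the theorem states.

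\textbf{Main obstacle.} There is no real difficulty; all of the analytic and combinatorial content is already in \cref{thm:classwise-directional-coefficient-characterization}. The only step needing care is surjectivity in Step 1. One must check that setting $c_{\bm{\gamma}}:=c_{\Lambda,I_{\bm{\gamma}}}$ on the infinitely many $\bm{\gamma}\in\Lambda$ gives a family whose normalization is exactly this constant-on-strata family. This holds because the normalization and reconstruction operators in \eqref{eq:normalized-coefficient} and \eqref{eq:raw-coefficient-reconstruction} are pointwise inverse to each other.
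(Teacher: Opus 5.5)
Your proposal is correct and takes essentially the paper's route: the paper obtains the theorem by composing the normalization isomorphism \eqref{eq:classwise-directional-normalization-isomorphism}, which comes from \cref{thm:classwise-directional-coefficient-characterization}, with the evaluation isomorphism \eqref{eq:directional-coefficient-space-Hom}. You spell out checks the paper leaves implicit, namely injectivity, surjectivity, and the fact that normalization and reconstruction are pointwise mutually inverse.
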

The module and its polynomial coefficient space are independent of the normalization reference; that choice enters only in the conversion from normalized coefficients to raw coefficients and formal logarithmic series.
Minimal supports are needed only to identify a finite generating subfamily and to prove finite length.
If $K\in\mathcal{A}_{\bm{w}}(\Lambda)$ and $I\in\mathcal{M}_{\bm{w}}(\Lambda)$ satisfies $I\subset K$, then the transit relation gives
\begin{equation}
\label{eq:acceptable-generator-from-minimal-generator}
\overline{e}_{\Lambda,K}^{*}=\partial_{\bm{y}}^{K\setminus I}\overline{e}_{\Lambda,I}^{*}\quad\text{in }M_{\Lambda,\bm{w}}^{\mathrm{dir}}.
\end{equation}
Every unacceptable generator vanishes in the quotient.
Consequently,
\begin{equation}
\label{eq:directional-module-generated-by-minimal-supports}
M_{\Lambda,\bm{w}}^{\mathrm{dir}}=\sum_{I\in\mathcal{M}_{\bm{w}}(\Lambda)}S\overline{e}_{\Lambda,I}^{*}.
\end{equation}
This is the only minimal-support reduction used below; all normalized components remain present in the directional coefficient system.

\subsubsection{Solutions attached to bounded inclusion-minimal negative supports and the exact lattice-selection criterion}
\label{subsec:minimal-support-solutions-exact-selection}
For $I\in\mathcal{M}_{\bm{w}}(\Lambda)$, define a normalized family $\bm{\varepsilon}_{\Lambda,I}\in\prod_{K\in\NS(\Lambda)}\C[\bm{y}]$ by
\begin{equation}
\label{eq:constant-directional-family-minimal-support}
(\bm{\varepsilon}_{\Lambda,I})_{K}:=
\begin{cases}
1,&K=I,\\
0,&K\neq I.
\end{cases}
\end{equation}
Fix $\bm{\lambda}\in\Lambda$ and apply inverse normalization directly to this full normalized family.
For $\bm{\gamma}\in\Lambda$, put
\[
r_{\Lambda,I;\bm{\gamma}}^{(\bm{\lambda})}:=\left(\widetilde d_{\bm{\lambda}\leftarrow\bm{\gamma}}\mathbin{\bullet}_{\bm{y}}\right)^{-1}\left(\widetilde d_{\bm{\gamma}\leftarrow\bm{\lambda}}\mathbin{\bullet}_{\bm{y}}(\bm{\varepsilon}_{\Lambda,I})_{\nsupp(\bm{\gamma})}\right)
\]
and
\[
\varphi_{\Lambda,I}^{(\bm{\lambda})}:=\Phi_{\widetilde{\Lambda}}\left((r_{\Lambda,I;\bm{\gamma}}^{(\bm{\lambda})})_{\bm{\gamma}\in\Lambda}\right)=\sum_{\bm{\gamma}\in\Lambda}\bm{x}^{\bm{\gamma}}r_{\Lambda,I;\bm{\gamma}}^{(\bm{\lambda})}(\log\bm{x})\in\mathscr{F}_{\widetilde{\Lambda}}^{\log}[\Lambda].
\]
\begin{theorem}[Solutions attached to bounded inclusion-minimal negative supports and classwise nonvanishing]
\label{thm:minimal-support-solutions-nonvanishing}
For every $I\in\mathcal{M}_{\bm{w}}(\Lambda)$, the family $\bm{\varepsilon}_{\Lambda,I}$ belongs to $\Coeff_{\bm{w}}^{\mathrm{dir}}(\Lambda)$, and
\[
\varphi_{\Lambda,I}^{(\bm{\lambda})}\in\Sol_{\bm{w},\Lambda}^{\mathrm{dir}}\bigl(H_{A}(\bm{\beta})\bigr),\quad\Supp\left(\varphi_{\Lambda,I}^{(\bm{\lambda})}\right)=\Lambda_{I}.
\]
In particular,
\[
\lexp_{\bm{w}}\left(\varphi_{\Lambda,I}^{(\bm{\lambda})}\right)=\bm{v}_{\Lambda,I}.
\]
Moreover, the following conditions are equivalent:
\begin{enumerate}[\rm (1)]
\item $\mathcal{M}_{\bm{w}}(\Lambda)\neq\emptyset$.
\item $M_{\Lambda,\bm{w}}^{\mathrm{dir}}\neq0$.
\item $\Coeff_{\bm{w}}^{\mathrm{dir}}(\Lambda)\neq 0$.
\item $\Sol_{\bm{w},\Lambda}^{\mathrm{dir}}\bigl(H_{A}(\bm{\beta})\bigr)\neq0$.
\end{enumerate}
\end{theorem}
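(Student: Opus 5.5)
We first check that $\bm{\varepsilon}_{\Lambda,I}$ satisfies the three defining conditions of $\Coeff_{\bm{w}}^{\mathrm{dir}}(\Lambda)$ in \eqref{eq:classwise-directional-coefficient-space}.

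The normalized Euler equations \eqref{eq:normalized-Euler-equation} hold trivially. Each component is a constant, and $(A\partial_{\bm{y}})_{i}$ has no constant term.

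The directional vanishing \eqref{eq:directional-normalized-vanishing} holds because the only nonzero component is indexed by $I$. By \eqref{eq:minimal-support-acceptable-bounded-inclusions}, $I\in\mathcal{M}_{\bm{w}}(\Lambda)\subset\mathcal{A}_{\bm{w}}(\Lambda)$.

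For the transit equations \eqref{eq:normalized-transit-equation}, take a pair $(K,J)$ of negative supports. If neither equals $I$, both sides vanish. If $K=J=I$, the equation is trivial. It remains to treat $K=I\neq J$. The equation reads
\[
\partial_{\bm{y}}^{J\setminus I}\mathbin{\bullet}_{\bm{y}}1=\partial_{\bm{y}}^{I\setminus J}\mathbin{\bullet}_{\bm{y}}0=0,
\]
so we need $J\setminus I\neq\emptyset$. This holds: if $J\subset I$ with $J\in\NS(\Lambda)$, inclusion-minimality of $I$ forces $J=I$, a contradiction. Hence $\partial_{\bm{y}}^{J\setminus I}$ is a nonempty product of derivatives, and it kills the constant $1$. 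This minimality step is the only point where the hypothesis on $I$ is really used, and it is the crux of the construction.

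Next we apply \cref{thm:classwise-directional-coefficient-characterization}, direction (2)$\Rightarrow$(1), to the raw family reconstructed by \eqref{eq:raw-coefficient-reconstruction}. This gives $\varphi_{\Lambda,I}^{(\bm{\lambda})}\in\Sol_{\bm{w},\Lambda}^{\mathrm{dir}}(H_{A}(\bm{\beta}))$. The support formula \eqref{eq:support-as-union-of-active-strata} applies with $\Act_{\Lambda}(\bm{\varepsilon}_{\Lambda,I})=\{I\}$, so $\Supp=\Lambda_{I}$. Then \eqref{eq:lowest-exponent-from-active-stratum-minima} gives $\lexp_{\bm{w}}=\bm{v}_{\Lambda,I}$; this minimum exists because $I\in\mathcal{B}_{\bm{w}}(\Lambda)$ and \cref{prop:directional-boundedness-stratum-minima} applies.

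For the equivalences, we prove (1)$\Rightarrow$(4)$\Rightarrow$(3)$\Rightarrow$(2)$\Rightarrow$(1).

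(1)$\Rightarrow$(4): the series just constructed is nonzero, since its support $\Lambda_{I}$ is nonempty.

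(4)$\Leftrightarrow$(3): this follows from the isomorphism \eqref{eq:classwise-directional-normalization-isomorphism}.

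(3)$\Rightarrow$(2): by \eqref{eq:directional-coefficient-space-Hom}, $\Coeff_{\bm{w}}^{\mathrm{dir}}(\Lambda)\cong\Hom_{S}(M_{\Lambda,\bm{w}}^{\mathrm{dir}},\C[\bm{y}])$. If the module were zero, this Hom space would be zero.

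(2)$\Rightarrow$(1): by \eqref{eq:directional-module-generated-by-minimal-supports}, $M_{\Lambda,\bm{w}}^{\mathrm{dir}}$ is generated by the classes $\overline{e}_{\Lambda,I}^{*}$ with $I\in\mathcal{M}_{\bm{w}}(\Lambda)$. If $\mathcal{M}_{\bm{w}}(\Lambda)=\emptyset$, the module is therefore zero.
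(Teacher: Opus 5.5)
Your proposal is correct and follows essentially the same route as the paper's proof. You check the Euler, transit and vanishing conditions for $\bm{\varepsilon}_{\Lambda,I}$, using inclusion-minimality to get $J\setminus I\neq\emptyset$, and then invoke \cref{thm:classwise-directional-coefficient-characterization} for the series, support and lowest exponent. You close the equivalences using the same three ingredients: the normalization isomorphism, the $\Hom$-presentation, and generation by the minimal-support classes. The only difference is cosmetic: you order the chain as (1)$\Rightarrow$(4)$\Rightarrow$(3), whereas the paper proves (1)$\Rightarrow$(3) and (3)$\Leftrightarrow$(4).
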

\begin{proof}
The Euler equations hold because every component of $\bm{\varepsilon}_{\Lambda,I}$ is constant.
If $K\in\NS(\Lambda)\setminus\{I\}$, then $K\setminus I\neq\emptyset$ by the inclusion-minimality of $I$, and hence
\[
\partial_{\bm{y}}^{K\setminus I}\mathbin{\bullet}_{\bm{y}}1=0.
\]
Therefore every normalized transit equation involving the unique nonzero component is satisfied, and the equations between zero components are immediate.
Finally, $I\in\mathcal{M}_{\bm{w}}(\Lambda)\subset\mathcal{A}_{\bm{w}}(\Lambda)$, so the directional vanishing equations also hold.
Thus
\[
\bm{\varepsilon}_{\Lambda,I}\in\Coeff_{\bm{w}}^{\mathrm{dir}}(\Lambda).
\]
The assertions about the corresponding series follow from \cref{thm:classwise-directional-coefficient-characterization,thm:directed-coefficient-module-presentation}.

It remains to prove the equivalence of the four conditions.
The construction proves \textup{(1)}$\Rightarrow$\textup{(3)}, and normalization gives \textup{(3)}$\Leftrightarrow$\textup{(4)}.
A nonzero coefficient family determines a nonzero element of $\Hom_{S}\left(M_{\Lambda,\bm{w}}^{\mathrm{dir}},\C[\bm{y}]\right)$, and hence \textup{(3)}$\Rightarrow$\textup{(2)}.
Conversely, \eqref{eq:directional-module-generated-by-minimal-supports} shows that $M_{\Lambda,\bm{w}}^{\mathrm{dir}}=0$ whenever $\mathcal{M}_{\bm{w}}(\Lambda)=\emptyset$.
Thus \textup{(2)}$\Rightarrow$\textup{(1)}.
\end{proof}
\begin{definition}[Classwise realizable exponent]
\label{def:classwise-realizable-exponent}
A vector $\bm{v}\in\Lambda$ is said to be \emph{$\Lambda$-realizable in the direction $\bm{w}$} if there exists a nonzero solution $\varphi\in\Sol_{\bm{w},\Lambda}^{\mathrm{dir}}\bigl(H_{A}(\bm{\beta})\bigr)$ such that $\lexp_{\bm{w}}(\varphi)=\bm{v}$.
The family of all such vectors is denoted by $\Rexp_{\bm{w}}(\Lambda)$.
\end{definition}
For every $I\in\mathcal{M}_{\bm{w}}(\Lambda)$, the stratum minimum $\bm{v}_{\Lambda,I}$ belongs to $\Rexp_{\bm{w}}(\Lambda)$ by \cref{thm:minimal-support-solutions-nonvanishing}.

\section{Global finiteness of admissible exponent lattices}
\label{sec:global-finiteness}
The classwise directional coefficient theory is now complete.
We next prove that only finitely many exponent lattices can contribute nonzero directed formal logarithmic solution spaces, even though the Euler fiber may contain uncountably many exponent lattices.
By \cref{thm:minimal-support-solutions-nonvanishing}, an exponent lattice contributes a nonzero directed formal logarithmic solution space if and only if it is $\bm{w}$-admissible.
The proof first extracts a full-rank integral-coordinate condition from a bounded negative-support stratum and then uses Smith normal form to discretize the corresponding exponent-lattice parameters.
Retain $r=\rank_{\Z}(L)=n-d$ and the Gale dual matrix $B\in\Mat_{n\times r}(\Z)$ fixed in \eqref{eq:exponent-lattice-basis-matrix}, whose columns form a $\Z$-basis of $L$.
Thus $L=B\Z^{r}$.
If $r=0$, then $B$ has no columns and every row submatrix of $B$ has column rank zero.
\subsection{Full coordinate rank forced by a bounded stratum}
\label{subsec:full-coordinate-rank-bounded-stratum}

For a subset $E\subset[n]$, let $B_{E}$ denote the submatrix of $B$ consisting of the rows indexed by $E$.
We first show that the existence of any $\bm{w}$-bounded negative-support stratum forces $B_{E_{\Lambda}}$ to have full column rank.

\begin{lemma}[Full coordinate rank forced by boundedness]
\label{lem:full-coordinate-rank-forced-by-boundedness}
Let $\Lambda\in\mathfrak{L}_{\bm{\beta}}$ satisfy $\mathcal{B}_{\bm{w}}(\Lambda)\neq\emptyset$.
Then $\rank_{\Q}(B_{E_{\Lambda}})=r$.
Equivalently, the linear map $B_{E_{\Lambda}}:\Q^{r}\to\Q^{E_{\Lambda}}$ is injective.
\end{lemma}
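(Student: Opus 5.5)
The plan is to argue by contraposition: assume $\rank_{\Q}(B_{E_{\Lambda}})<r$, and show that every nonempty stratum $\Lambda_{I}$ fails to be $\bm{w}$-bounded below. This contradicts $\mathcal{B}_{\bm{w}}(\Lambda)\neq\emptyset$. The case $r=0$ is vacuous, since then the rank is trivially $r=0$, so we may assume $r\geq1$.

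First, since $B_{E_{\Lambda}}$ is an integer matrix, a deficient rank gives a nonzero rational kernel vector. Clearing denominators yields a nonzero integral vector $\bm{q}\in\Z^{r}$ with $(B\bm{q})_{j}=0$ for all $j\in E_{\Lambda}$. The columns of $B$ form a $\Z$-basis of $L$, so $B\bm{q}\in L\setminus\{\bm{0}\}$. By $L$-genericity, $\bm{\omega}\cdot\bm{q}=\bm{w}\cdot B\bm{q}\neq0$. Replacing $\bm{q}$ by $-\bm{q}$ if necessary, we may arrange $\bm{\omega}\cdot\bm{q}<0$; note that $-\bm{q}$ also lies in $\Ker(B_{E_{\Lambda}})$.

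Next, fix any $I\in\NS(\Lambda)$. The inequalities defining the recession cone $R_{\Lambda,I}$ in \eqref{eq:classwise-negative-support-recession-cone} involve only the coordinates $j\in E_{\Lambda}$. Since $(B\bm{q})_{j}=0$ for all such $j$, every one of these inequalities holds with equality, so $\bm{q}\in R_{\Lambda,I}$. Because $\bm{\omega}\cdot\bm{q}<0$, condition (2) of \cref{prop:directional-boundedness-stratum-minima} fails, and hence $\Lambda_{I}$ is not $\bm{w}$-bounded below. One can also see this directly: by \cref{lem:integral-description-negative-support-stratum}, translating an integral point $\bm{m}_{0}\in P_{\Lambda,I}^{\bm{\lambda}}$ by $k\bm{q}$ preserves membership and integrality and drives $\bm{\omega}\cdot(\bm{m}_{0}+k\bm{q})\to-\infty$. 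Therefore $\mathcal{B}_{\bm{w}}(\Lambda)=\emptyset$, which is the desired contradiction.

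The equivalence with injectivity of $B_{E_{\Lambda}}:\Q^{r}\to\Q^{E_{\Lambda}}$ is just rank–nullity. There is no substantial obstacle in this argument. The only point needing care is that the kernel vector must be integral and nonzero, so that $L$-genericity applies; this is exactly why we pass from $\Q$ to $\Z$ by clearing denominators. It also uses that the defining inequalities of the sign polyhedron impose no condition on coordinates outside $E_{\Lambda}$.
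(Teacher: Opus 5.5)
Your proof is correct and follows essentially the same argument as the paper: a nonzero integral vector $\bm{q}\in\Ker(B_{E_{\Lambda}})$ and its negative both lie in $R_{\Lambda,I}$, and combining \cref{prop:directional-boundedness-stratum-minima} with $L$-genericity yields the contradiction. The only cosmetic difference is ordering: you fix the sign so that $\bm{\omega}\cdot\bm{q}<0$ and conclude that every stratum is unbounded, whereas the paper starts from one bounded $I$ and derives $\bm{\omega}\cdot\bm{q}=0$.
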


\begin{proof}
Choose $I\in\mathcal{B}_{\bm{w}}(\Lambda)$.
Suppose that $\rank_{\Q}(B_{E_{\Lambda}})<r$.
Since $B_{E_{\Lambda}}$ has integer entries, there exists a nonzero vector $\bm{q}\in\Ker_{\Z}(B_{E_{\Lambda}})$.
Then $\bm{q},-\bm{q}\in R_{\Lambda,I}$.
By \cref{prop:directional-boundedness-stratum-minima}, the functional $\bm{\omega}=B^{T}\bm{w}$ is nonnegative on $R_{\Lambda,I}$.
Hence $\bm{\omega}\cdot\bm{q}\geq 0$ and $-\bm{\omega}\cdot\bm{q}\geq 0$, so $\bm{\omega}\cdot\bm{q}=0$.
Therefore $\bm{w}\cdot B\bm{q}=0$.
Since $B\bm{q}\in L\setminus\{\bm{0}\}$, this contradicts the $L$-genericity of $\bm{w}$.
Thus $\rank_{\Q}(B_{E_{\Lambda}})=r$.
\end{proof}

The contrapositive of \cref{lem:full-coordinate-rank-forced-by-boundedness} shows that if $\rank_{\Q}(B_{E_{\Lambda}})<r$, then
\[
\mathcal{B}_{\bm{w}}(\Lambda)=\mathcal{M}_{\bm{w}}(\Lambda)=\emptyset,\quad M_{\Lambda,\bm{w}}^{\mathrm{dir}}=0.
\]
The next step is to prove that, for every fixed subset $E\subset[n]$ satisfying $\rank_{\Q}(B_{E})=r$, only finitely many exponent lattices in the Euler fiber have integral-coordinate set $E$.
This will imply that the entire family $\mathfrak{L}_{\bm{\beta},\bm{w}}^{\mathrm{adm}}$ is finite.

\subsection{Finiteness for a fixed full-rank integral-coordinate set}
\label{subsec:finiteness-fixed-full-rank-integral-coordinate-set}

Let $E\subset[n]$.
We first consider exponent lattices for which every coordinate indexed by $E$ is integral.
Define
\[
\mathfrak{L}_{\bm{\beta}}^{\supset E}:=\left\{\Lambda\in\mathfrak{L}_{\bm{\beta}}\,\middle|\,E\subset E_{\Lambda}\right\}.
\]
The family of exponent lattices whose integral-coordinate set is exactly $E$ is
\[
\mathfrak{L}_{\bm{\beta}}^{E}:=\left\{\Lambda\in\mathfrak{L}_{\bm{\beta}}\,\middle|\,E_{\Lambda}=E\right\}.
\]
Evidently,
$\mathfrak{L}_{\bm{\beta}}^{E} \subset \mathfrak{L}_{\bm{\beta}}^{\supset E}$.

For the fixed Gale dual matrix $B$, define
\[
\Gamma_{E} := \left\{ \bm{z}\in\C^{r} \,\middle|\, B_{E}\bm{z}\in\Z^{E} \right\}.
\]
Since $B_{E}$ has integer entries, one has $\Z^{r} \subset \Gamma_{E}$.
For a subgroup $N\subset\Z^{E}$, write $\operatorname{Sat}(N):=(\Q N)\cap\Z^{E}$ for its saturation in $\Z^{E}$.

\begin{proposition}[Exponent lattices with prescribed integral coordinates]
\label{prop:finite-classification-prescribed-integral-coordinates}
Assume that $\rank_{\Q}(B_{E})=r$.
Then $\Gamma_E$ is a full-rank lattice in $\R^{r}$ containing $\Z^{r}$, and 
\[
\Gamma_{E}/\Z^{r}
\cong
\operatorname{Sat}(B_{E}\Z^{r})/B_{E}\Z^{r}.
\]
If $\mathfrak{L}_{\bm{\beta}}^{\supset E}$ is nonempty and $\Lambda_{0}$ is one of its elements, then any $\bm{\lambda}_{0}\in\Lambda_{0}$ induces a bijection
\[
\Gamma_{E}/\Z^{r}\xrightarrow{\sim}\mathfrak{L}_{\bm{\beta}}^{\supset E},\quad\bm{z}+\Z^{r}\longmapsto\Lambda(\bm{\lambda}_{0}+B\bm{z}).
\]
Consequently,
\[
|\mathfrak{L}_{\bm{\beta}}^{\supset E}|=[\operatorname{Sat}(B_{E}\Z^{r}):B_{E}\Z^{r}]<\infty,
\]
and the subfamily $\mathfrak{L}_{\bm{\beta}}^{E}$ is finite.
\end{proposition}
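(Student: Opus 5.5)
The plan is to prove the three assertions in order: the lattice structure of $\Gamma_{E}$, the isomorphism with the saturation quotient, and the bijection with $\mathfrak{L}_{\bm{\beta}}^{\supset E}$.

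\emph{Lattice structure.} Since $B_{E}$ has rank $r$ over $\Q$, it is injective on $\C^{r}$. If $B_{E}\bm{z}\in\Z^{E}$, choose $r$ linearly independent rows of $B_{E}$ forming an invertible integer matrix $C$. Then $\bm{z}=C^{-1}(\text{integer vector})$, so $\bm{z}\in\det(C)^{-1}\Z^{r}$. Hence $\Z^{r}\subset\Gamma_{E}\subset\det(C)^{-1}\Z^{r}$, which shows that $\Gamma_{E}$ is a full-rank lattice in $\R^{r}$; in particular it is contained in $\Q^{r}$.

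\emph{Saturation quotient.} Consider the map $\Gamma_{E}\to\Z^{E}$, $\bm{z}\mapsto B_{E}\bm{z}$. It is injective. Its image is contained in $(\Q B_{E}\Z^{r})\cap\Z^{E}=\operatorname{Sat}(B_{E}\Z^{r})$, because $\bm{z}\in\Q^{r}$. Conversely, any $\bm{u}\in\operatorname{Sat}(B_{E}\Z^{r})$ equals $B_{E}\bm{z}$ for some $\bm{z}\in\Q^{r}$, and this $\bm{z}$ lies in $\Gamma_{E}$. So the map is an isomorphism $\Gamma_{E}\cong\operatorname{Sat}(B_{E}\Z^{r})$ carrying $\Z^{r}$ onto $B_{E}\Z^{r}$, and passing to quotients gives the claimed isomorphism. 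Finiteness of the index follows because both groups have rank $r$; alternatively, Smith normal form $B_{E}=U\,\mathrm{diag}(s_{1},\dots,s_{r})\,V$ makes the index $\prod s_{i}$ explicit.

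\emph{Bijection.} Fix $\bm{\lambda}_{0}\in\Lambda_{0}$, so $A\bm{\lambda}_{0}=\bm{\beta}$ and $(\bm{\lambda}_{0})_{j}\in\Z$ for $j\in E$.
\begin{enumerate}[\rm (a)]
\item \emph{Well defined.} For $\bm{z}\in\Gamma_{E}$, the vector $\bm{\lambda}_{0}+B\bm{z}$ lies in the Euler fiber because $AB=0$. Its coordinates in $E$ are integers, so the corresponding lattice lies in $\mathfrak{L}_{\bm{\beta}}^{\supset E}$. Replacing $\bm{z}$ by $\bm{z}+\bm{m}$ with $\bm{m}\in\Z^{r}$ changes the vector by $B\bm{m}\in L$, so the lattice is unchanged.
\item \emph{Injective.} If $\Lambda(\bm{\lambda}_{0}+B\bm{z})=\Lambda(\bm{\lambda}_{0}+B\bm{z}')$, then $B(\bm{z}-\bm{z}')\in L=B\Z^{r}$. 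Since $B$ is injective on $\C^{r}$ (its rows include those of $B_{E}$), it follows that $\bm{z}-\bm{z}'\in\Z^{r}$.
\item \emph{Surjective.} For $\Lambda(\bm{\lambda})\in\mathfrak{L}_{\bm{\beta}}^{\supset E}$, the difference $\bm{\lambda}-\bm{\lambda}_{0}$ lies in $\Ker_{\C}(A)$. This space is $B\C^{r}$, because $B$ is a $\Z$-basis of $L$ and $\Ker_{\C}A=L\otimes\C$ has dimension $r$. Write $\bm{\lambda}-\bm{\lambda}_{0}=B\bm{z}$. The $E$-coordinates of $B\bm{z}$ are differences of integers, so $\bm{z}\in\Gamma_{E}$.
\end{enumerate}

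The cardinality formula follows from the bijection and the isomorphism, and $\mathfrak{L}_{\bm{\beta}}^{E}\subset\mathfrak{L}_{\bm{\beta}}^{\supset E}$ is therefore finite. If $\mathfrak{L}_{\bm{\beta}}^{\supset E}$ is empty, there is nothing to prove.

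The only mildly delicate point is the identification $\Ker_{\C}(A)=B\C^{r}$, which uses that $L$ is saturated of rank $n-d$ (because $\rank A=d$), together with ensuring that $\Gamma_{E}\subset\Q^{r}$ so that the saturation describes the image.
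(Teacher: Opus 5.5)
Your proof is correct and follows essentially the same route as the paper. You use the injectivity of $B_{E}$, identify $\Gamma_{E}$ with $\operatorname{Sat}(B_{E}\Z^{r})$ (the paper phrases this as a surjection onto the quotient with kernel $\Z^{r}$), and check well-definedness, injectivity and surjectivity of $\bm{z}+\Z^{r}\mapsto\Lambda(\bm{\lambda}_{0}+B\bm{z})$ in the same way. The only cosmetic difference is in how you show that $\Gamma_{E}$ lies in $\R^{r}$, lies in $\Q^{r}$, and is a lattice: you get all three at once from $\Z^{r}\subset\Gamma_{E}\subset\det(C)^{-1}\Z^{r}$ via an invertible $r\times r$ row minor $C$, whereas the paper uses real and imaginary parts, Smith normal form, and a rational left inverse separately.
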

\begin{proof}
Since $B_{E}$ has full column rank, the complex-linear map $B_{E}:\C^{r}\to\C^{E}$ is injective.
Let $\bm{z}\in\Gamma_{E}$, and write $\bm{z}=\bm{x}+\sqrt{-1}\,\bm{y}$ with $\bm{x},\bm{y}\in\R^{r}$.
Since $B_{E}\bm{z}\in\Z^{E}\subset\R^{E}$, one has $B_{E}\bm{y}=\bm{0}$.
The injectivity of $B_{E}$ gives $\bm{y}=\bm{0}$, and hence $\Gamma_{E}\subset\R^{r}$.

By the Smith normal form of $B_{E}$, $\Gamma_{E}$ is a full-rank lattice in $\R^{r}$ containing $\Z^{r}$ with finite index. 
We next identify the corresponding finite quotient.
Since $B_{E}$ has full column rank over $\Q$, it admits a left inverse with rational entries.
Thus, if $\bm{z}\in\Gamma_{E}$, then $B_{E}\bm{z}\in\Z^{E}$ implies $\bm{z}\in\Q^{r}$.
Consequently,
\[ 
B_{E}\bm{z}\in B_{E}\Q^{r}\cap\Z^{E}=\operatorname{Sat}(B_{E}\Z^{r}). 
\]
Therefore the homomorphism
\[
\vartheta_{E}:\Gamma_{E}\longrightarrow\operatorname{Sat}(B_{E}\Z^{r})/B_{E}\Z^{r},\quad\bm{z}\longmapsto B_{E}\bm{z}+B_{E}\Z^{r},
\]
is well defined.
If $\vartheta_{E}(\bm{z})=0$, then $B_{E}\,\bm{z}=B_{E}\,\bm{m}$ for some $\bm{m}\in\Z^{r}$.
The injectivity of $B_{E}$ gives $\bm{z}=\bm{m}\in \Z^{r}$, and hence
\[
\Ker(\vartheta_{E})=\Z^{r}.
\]
Moreover, if $\bm{u}\in\operatorname{Sat}(B_{E}\Z^{r})$, then $\bm{u}=B_{E}\bm{z}\in\Z^{E}$ for some $\bm{z}\in\Q^{r}$, and hence $\bm{z}\in\Gamma_{E}$.
Thus $\vartheta_{E}$ is surjective, and the first isomorphism theorem gives
\[
\Gamma_{E}/\Z^{r}\cong\operatorname{Sat}(B_{E}\Z^{r})/B_{E}\Z^{r}.
\]

Assume that $\mathfrak{L}_{\bm{\beta}}^{\supset E}$ is nonempty, choose $\Lambda_{0}=\Lambda(\bm{\lambda}_{0})=\bm{\lambda}_{0}+L\in\mathfrak{L}_{\bm{\beta}}^{\supset E}$.
Since the columns of $B$ form a basis of $\Ker_{\C}(A)$, every point of $A_{\C}^{-1}(\bm{\beta})$ can be written uniquely as $\bm{\lambda}_{0}+B\bm{z}$ for some $\bm{z}\in\C^{r}$.

Let $\bm{z}\in\Gamma_{E}$.
Since $AB=0$, one has
\[
A(\bm{\lambda}_{0}+B\bm{z})=A\bm{\lambda}_{0}=\bm{\beta},
\]
and hence $\bm{\lambda}_{0}+B\bm{z}\in A_{\C}^{-1}(\bm{\beta})$.
Moreover, since $\Lambda_{0}\in\mathfrak{L}_{\bm{\beta}}^{\supset E}$, the coordinates of $\bm{\lambda}_{0}$ indexed by $E$ are integral.
By the definition of $\Gamma_{E}$, $B_{E}\bm{z}\in\Z^{E}$, and therefore $(\bm{\lambda}_{0}+B\bm{z})_{E}=(\bm{\lambda}_{0})_{E}+B_{E}\bm{z}\in\Z^{E}$.
Thus
\[
\Lambda(\bm{\lambda}_{0}+B\bm{z})\in\mathfrak{L}_{\bm{\beta}}^{\supset E}.
\]
Consequently, the assignment $\bm{z} \mapsto\Lambda(\bm{\lambda}_{0}+B\bm{z})$ defines a map from $\Gamma_{E}$ to $\mathfrak{L}_{\bm{\beta}}^{\supset E}$.

For $\bm{z},\bm{z}'\in\Gamma_{E}$, by the injectivity of $B$, one has
\begin{align}
\Lambda(\bm{\lambda}_{0}+B\bm{z})=\Lambda(\bm{\lambda}_{0}+B\bm{z}')&\iff B(\bm{z}-\bm{z}')\in L=B\Z^{r}\\
&\iff \bm{z}-\bm{z}'\in\Z^{r}.
\end{align}
Hence the preceding map induces a well-defined injective map
\[
\Gamma_{E}/\Z^{r}\longrightarrow\mathfrak{L}_{\bm{\beta}}^{\supset E},\quad\bm{z}+\Z^{r}\longmapsto\Lambda(\bm{\lambda}_{0}+B\bm{z}).
\]

To prove surjectivity, let $\Lambda=\Lambda(\bm{\lambda})=\bm{\lambda}+L\in\mathfrak{L}_{\bm{\beta}}^{\supset E}$.
Since $\bm{\lambda},\bm{\lambda}_{0}\in A_{\C}^{-1}(\bm{\beta})$, there exists a unique $\bm{z}\in\C^{r}$ such that $\bm{\lambda}=\bm{\lambda}_{0}+B\bm{z}$.
Both $\bm{\lambda}$ and $\bm{\lambda}_{0}$ have integral coordinates indexed by $E$, and hence
\[
B_{E}\bm{z}=\bm{\lambda}_{E}-(\bm{\lambda}_{0})_{E}\in\Z^{E}.
\]
Thus $\bm{z}\in\Gamma_{E}$, and $\Lambda=\Lambda(\bm{\lambda}_{0}+B\bm{z})$.
Therefore the induced map is surjective and hence bijective.

Combining the two bijections gives
\[
\left|\mathfrak{L}_{\bm{\beta}}^{\supset E}\right|=\left|\Gamma_{E}/\Z^{r}\right|=\left[\operatorname{Sat}(B_{E}\Z^{r}):B_{E}\Z^{r}\right]<\infty.
\]
Finally, since $\mathfrak{L}_{\bm{\beta}}^{E}\subset\mathfrak{L}_{\bm{\beta}}^{\supset E}$, the subfamily $\mathfrak{L}_{\bm{\beta}}^{E}$ is finite.
\end{proof}
The saturation index is computable from the Smith normal form of $B_{E}$.
\subsection{Finiteness of the admissible exponent-lattice family}
\label{subsec:finiteness-admissible-exponent-lattice-family}

We now combine the full-rank obstruction with the preceding fixed-set finiteness theorem.

\begin{theorem}[Finiteness of the admissible exponent-lattice family]
\label{thm:finiteness-admissible-exponent-lattice-family}
For every $L$-generic direction $\bm{w}\in\mathcal{W}_{L}^{\mathrm{gen}}$, the family 
\[
\mathfrak{L}_{\bm{\beta},\bm{w}}^{\mathrm{adm}}=\left\{\Lambda\in \mathfrak{L}_{\bm{\beta}}\,\middle|\,\mathcal{M}_{\bm{w}}(\Lambda)\neq\emptyset\right\}
\]
is finite.
\end{theorem}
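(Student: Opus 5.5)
The plan is to reduce the statement to a finite union of the finite families supplied by \cref{prop:finite-classification-prescribed-integral-coordinates}, using \cref{lem:full-coordinate-rank-forced-by-boundedness} as the bridge.

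First, let $\Lambda\in\mathfrak{L}_{\bm{\beta},\bm{w}}^{\mathrm{adm}}$. Then $\mathcal{M}_{\bm{w}}(\Lambda)\neq\emptyset$, and since $\mathcal{M}_{\bm{w}}(\Lambda)=\MinNS(\Lambda)\cap\mathcal{B}_{\bm{w}}(\Lambda)$, also $\mathcal{B}_{\bm{w}}(\Lambda)\neq\emptyset$. \Cref{lem:full-coordinate-rank-forced-by-boundedness} then gives $\rank_{\Q}(B_{E_{\Lambda}})=r$. Hence $\Lambda\in\mathfrak{L}_{\bm{\beta}}^{E}$ with $E=E_{\Lambda}$ lying in the finite collection
\[
\mathcal{E}:=\left\{E\subset[n]\,\middle|\,\rank_{\Q}(B_{E})=r\right\}.
\]
This yields the inclusion
\[
\mathfrak{L}_{\bm{\beta},\bm{w}}^{\mathrm{adm}}\subset\bigcup_{E\in\mathcal{E}}\mathfrak{L}_{\bm{\beta}}^{E}.
\]

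Second, for each $E\in\mathcal{E}$ we apply \cref{prop:finite-classification-prescribed-integral-coordinates}. If $\mathfrak{L}_{\bm{\beta}}^{\supset E}$ is empty, then so is $\mathfrak{L}_{\bm{\beta}}^{E}$. Otherwise $|\mathfrak{L}_{\bm{\beta}}^{\supset E}|=[\operatorname{Sat}(B_{E}\Z^{r}):B_{E}\Z^{r}]<\infty$, and $\mathfrak{L}_{\bm{\beta}}^{E}\subset\mathfrak{L}_{\bm{\beta}}^{\supset E}$. Since $\mathcal{E}$ has at most $2^{n}$ members, the union on the right is finite, and we also get the explicit bound
\[
|\mathfrak{L}_{\bm{\beta},\bm{w}}^{\mathrm{adm}}|\leq\sum_{E\in\mathcal{E}}[\operatorname{Sat}(B_{E}\Z^{r}):B_{E}\Z^{r}].
\]

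Third, we check the degenerate case $r=0$. Then $L=\{\bm{0}\}$, the fiber is a single point, every $E$ satisfies the rank condition vacuously, and $\Gamma_{E}=\Z^{0}$. So the argument still gives at most one lattice, and it is consistent with the conventions stated before the lemma.

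No step is a genuine obstacle, since the two substantive inputs are already proved. The only point needing care is that the passage from admissibility to the rank condition uses merely $\mathcal{B}_{\bm{w}}(\Lambda)\neq\emptyset$, which is weaker than admissibility. It is also worth noting that we are counting lattices by their exact integral-coordinate set $E_{\Lambda}$, so each admissible lattice is captured exactly once.
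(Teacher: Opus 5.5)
Your proposal is correct and matches the paper's proof: admissibility gives $\mathcal{B}_{\bm{w}}(\Lambda)\neq\emptyset$, the full-rank lemma then forces $\rank_{\Q}(B_{E_{\Lambda}})=r$, and partitioning by the exact integral-coordinate set reduces finiteness to the finite saturation-quotient count in the prescribed-integral-coordinates proposition. Your explicit bound $\sum_{E}[\operatorname{Sat}(B_{E}\Z^{r}):B_{E}\Z^{r}]$ and your separate check of the case $r=0$ are harmless additions that the paper leaves implicit.
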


\begin{proof}
Let $\Lambda\in\mathfrak{L}_{\bm{\beta},\bm{w}}^{\mathrm{adm}}$.
Then $\emptyset\neq\mathcal{M}_{\bm{w}}(\Lambda)\subset\mathcal{B}_{\bm{w}}(\Lambda)$.
Hence \cref{lem:full-coordinate-rank-forced-by-boundedness} gives $\rank_{\Q}(B_{E_{\Lambda}})=r$.

Partition the admissible exponent lattices according to their integral-coordinate sets:
\begin{equation}
\label{eq:admissible-family-by-integral-coordinate-sets}
\mathfrak{L}_{\bm{\beta},\bm{w}}^{\mathrm{adm}}=\coprod_{\substack{E\subset[n]\\\rank_{\Q}(B_{E})=r}}\left(\mathfrak{L}_{\bm{\beta},\bm{w}}^{\mathrm{adm}}\cap\mathfrak{L}_{\bm{\beta}}^{E}\right).
\end{equation}
By \cref{prop:finite-classification-prescribed-integral-coordinates}, $\mathfrak{L}_{\bm{\beta},\bm{w}}^{\mathrm{adm}} \cap \mathfrak{L}_{\bm{\beta}}^{E}$ is finite for every $E$ with $\rank_{\Q}(B_{E})=r$.
Therefore, the right-hand side of \eqref{eq:admissible-family-by-integral-coordinate-sets} is a finite union of finite sets.
Consequently, $\mathfrak{L}_{\bm{\beta},\bm{w}}^{\mathrm{adm}}$ is finite.
\end{proof}
By \cref{thm:minimal-support-solutions-nonvanishing}, the nonzero summands in \eqref{eq:all-lattice-classwise-directed-solution-space} are precisely those indexed by the finite set $\mathfrak{L}_{\bm{\beta},\bm{w}}^{\mathrm{adm}}$.
Consequently, only finitely many summands in \eqref{eq:all-lattice-classwise-directed-solution-space} are nonzero.
Thus the all-lattice directed formal logarithmic solution space is a finite direct sum of nonzero directed formal logarithmic solution spaces:
\[
\Sol_{\bm{\beta},\bm{w}}^{\mathrm{dir,all}}\bigl(H_{A}(\bm{\beta})\bigr)=\bigoplus_{\Lambda\in\mathfrak{L}_{\bm{\beta},\bm{w}}^{\mathrm{adm}}}\Sol_{\bm{w},\Lambda}^{\mathrm{dir}}\bigl(H_{A}(\bm{\beta})\bigr).
\]

\section{Local finite length and the all-lattice coefficient module}
\label{sec:local-finite-length}
By \cref{thm:finiteness-admissible-exponent-lattice-family}, only finitely many exponent lattices are $\bm{w}$-admissible.
We now prove that the finite directional coefficient module attached to each admissible exponent lattice has finite length.
The proof associates a local constraint ideal with each bounded inclusion-minimal negative support and shows that the corresponding quotient is zero-dimensional.
We then combine this classwise finite-length result with the global finiteness established in the preceding section.
Define the all-lattice directional coefficient module by
\begin{equation}
\label{eq:all-lattice-directional-coefficient-module}
M_{\bm{\beta},\bm{w}}^{\mathrm{dir,all}}:=\bigoplus_{\Lambda\in\mathfrak{L}_{\bm{\beta}}} M_{\Lambda,\bm{w}}^{\mathrm{dir}}.
\end{equation}
By \cref{thm:minimal-support-solutions-nonvanishing}, the nonzero summands are precisely those indexed by $\mathfrak{L}_{\bm{\beta},\bm{w}}^{\mathrm{adm}}$.
Hence \cref{thm:finiteness-admissible-exponent-lattice-family} gives
\begin{equation}
\label{eq:all-lattice-directional-coefficient-module-admissible-sum}
M_{\bm{\beta},\bm{w}}^{\mathrm{dir,all}}=\bigoplus_{\Lambda\in\mathfrak{L}_{\bm{\beta},\bm{w}}^{\mathrm{adm}}} M_{\Lambda,\bm{w}}^{\mathrm{dir}},
\end{equation}
and this is a finite direct sum.
Combining \eqref{eq:all-lattice-classwise-directed-solution-space}, \eqref{eq:all-lattice-directional-coefficient-module-admissible-sum}, and \cref{thm:directed-coefficient-module-presentation} yields a canonical isomorphism
\begin{equation}
\label{eq:all-lattice-directed-solution-directional-module-Hom}
\Sol_{\bm{\beta},\bm{w}}^{\mathrm{dir,all}}\bigl(H_{A}(\bm{\beta})\bigr)\cong \Hom_{S}\left(M_{\bm{\beta},\bm{w}}^{\mathrm{dir,all}},\C[\bm{y}]\right).
\end{equation}

\subsection{Local constraint ideals and their finite-length quotients}
\label{subsec:minimal-support-constraint-ideals-finite-length-quotients}

Put
\[
\mathfrak{m}:=\left\langle\partial_{y_{1}},\dots,\partial_{y_{n}}\right\rangle\subset S.
\]
We isolate the Euler relations and the boundary relations imposed at each bounded inclusion-minimal negative support.
A boundary relation is the monomial relation induced, through a normalized transit equation, by the vanishing of an unbounded negative-support component.
Transit relations between distinct bounded inclusion-minimal negative supports are not included in the local constraint ideals.

Define the Euler linear ideal by
\begin{equation}
\label{eq:Euler-linear-ideal-contained-maximal-ideal}
\mathfrak{e}_{A}:=\left\langle(A\partial_{\bm{y}})_{i}:=\sum_{j=1}^{n} a_{ij}\partial_{y_{j}}\,\middle|\,i=1,\dots,d\right\rangle\subset\mathfrak{m}.
\end{equation}

Throughout this subsection, we fix $\Lambda\in\mathfrak{L}_{\bm{\beta},\bm{w}}^{\mathrm{adm}}$ and $I\in\mathcal{M}_{\bm{w}}(\Lambda)$.
For every $J\in\mathcal{U}_{\bm{w}}(\Lambda)$, one has
\begin{equation}
\label{eq:unbounded-support-difference-from-minimal-support-nonempty}
J\setminus I\neq\emptyset.
\end{equation}
Indeed, otherwise $J\subset I$, and the inclusion-minimality of $I$ in $\NS(\Lambda)$ would give $J=I$, contrary to $J\in\mathcal{U}_{\bm{w}}(\Lambda)$ and $I\in\mathcal{B}_{\bm{w}}(\Lambda)$.

Hence we define a monomial ideal by
\begin{equation}
\label{eq:minimal-support-boundary-ideal-contained-maximal-ideal}
\mathfrak{b}_{\Lambda,\bm{w},I}:=\left\langle\partial_{\bm{y}}^{J\setminus I}\,\middle|\,J\in\mathcal{U}_{\bm{w}}(\Lambda)\right\rangle\subset \mathfrak{m},
\end{equation}
and call it the \emph{minimal-support boundary ideal at $I$}.

Moreover, we define the \emph{local constraint ideal at $I$} by
\begin{equation}
\label{eq:local-constraint-ideal-at-I}
\mathfrak{r}_{\Lambda,\bm{w},I}:=\mathfrak{e}_{A}+\mathfrak{b}_{\Lambda,\bm{w},I}\subset\mathfrak{m},
\end{equation}
and the corresponding \emph{local constraint quotient} by $R_{\Lambda,\bm{w},I}:=S/\mathfrak{r}_{\Lambda,\bm{w},I}$.
In particular, $\mathfrak{r}_{\Lambda,\bm{w},I} \neq S$, and hence
\begin{equation}
\label{eq:minimal-support-constraint-quotient-nonzero}
R_{\Lambda,\bm{w},I}\neq 0.
\end{equation}

We next prove that the local constraint quotient has finite length.
The proof is by contradiction from a prime ideal in its support.
The differential variables surviving modulo that prime determine a nonzero relation vector in $L$; moving a bounded inclusion-minimal negative-support stratum minimum along this vector eventually reaches an unbounded negative-support stratum, whose boundary monomial contradicts primality.

\begin{lemma}[Prime-support kernel relation]
\label{lem:prime-support-kernel-relation}
Let $\mathfrak{p}\subset S$ be a prime ideal satisfying $\mathfrak{e}_{A}\subset\mathfrak{p}$.
Define
\[
F(\mathfrak{p}):=\left\{j\in[n]\,\middle|\,\partial_{y_{j}}\notin\mathfrak{p}\right\}.
\]
If $F(\mathfrak{p})\neq\emptyset$, then there exists $\bm{0}\neq\bm{g}\in L$ such that $\supp(\bm{g})\subset F(\mathfrak{p})$.
\end{lemma}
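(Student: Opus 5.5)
The plan is to pass to the fraction field of $S/\mathfrak{p}$ and read off a linear dependence among the columns of $A$ indexed by $F(\mathfrak{p})$. Write $F:=F(\mathfrak{p})$. Since $\mathfrak{p}$ is prime, $S/\mathfrak{p}$ is an integral domain containing $\C$. Let $K$ be its field of fractions, which has characteristic zero. Let $\xi_{j}\in K$ denote the image of $\partial_{y_{j}}$ for $j=1,\dots,n$. By the definition of $F$, we have $\xi_{j}\neq 0$ exactly when $j\in F$.

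The first step translates $\mathfrak{e}_{A}\subset\mathfrak{p}$ into the identities
\[
\sum_{j=1}^{n}a_{ij}\xi_{j}=0\quad(i=1,\dots,d)
\]
in $K$. The terms with $j\notin F$ vanish, so this says $\sum_{j\in F}\xi_{j}\bm{a}_{j}=\bm{0}$ in $K^{d}$. Since $F\neq\emptyset$ and every $\xi_{j}$ with $j\in F$ is nonzero, the vector $(\xi_{j})_{j\in F}$ is a nonzero element of the kernel of the submatrix $A_{F}$ formed by the columns of $A$ indexed by $F$. Hence $A_{F}$ has a nontrivial kernel over $K$.

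The second step descends this dependence from $K$ to $\Q$, and then to $\Z$. The rank of an integer matrix is the same over every field of characteristic zero, because it is determined by the vanishing or nonvanishing of its minors, which are integers. So $\rank_{K}(A_{F})<|F|$ gives $\rank_{\Q}(A_{F})<|F|$. Therefore $\Ker_{\Q}(A_{F})$ contains a nonzero vector, and clearing denominators gives a nonzero vector $\bm{h}\in\Z^{F}$ with $A_{F}\bm{h}=\bm{0}$.

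Finally, extend $\bm{h}$ by zero outside $F$ to obtain $\bm{g}\in\Z^{n}$. Then $A\bm{g}=A_{F}\bm{h}=\bm{0}$, so $\bm{g}\in L$, with $\bm{g}\neq\bm{0}$ and $\supp(\bm{g})\subset F(\mathfrak{p})$.

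No step is a serious obstacle. The only point needing care is the descent in the second step: the dependence $(\xi_{j})$ lives over $K$, which may be transcendental over $\C$, so it must be replaced by a rational one. Invariance of rank under field extension handles this directly. Note that the resulting $\bm{g}$ need not have full support $F$; only $\supp(\bm{g})\subset F$ is claimed, and that is all the argument provides.
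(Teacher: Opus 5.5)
Your proposal is correct and follows essentially the same route as the paper. Both arguments pass to the fraction field of $S/\mathfrak{p}$ to get a nonzero kernel vector $(\xi_{j})_{j\in F}$ of $A_{F}$ over that field. Both then use invariance of rank under field extension to obtain a nonzero integral kernel vector $\bm{h}$, and zero-extend it to $\bm{g}\in L$ with $\supp(\bm{g})\subset F(\mathfrak{p})$.
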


\begin{proof}
Put $F:=F(\mathfrak{p})$.
Let $K:= \operatorname{Frac}(S/\mathfrak{p})$ be the fraction field of the integral domain $S/\mathfrak{p}$.
For $j\in F$, let $\xi_{j} \in K$ be the image of $\partial_{y_{j}}$.
By the definition of $F$, one has $\xi_{j}\neq0$ for $j\in F$.
For $j\notin F$, the image of $\partial_{y_{j}}$ in $S/\mathfrak{p}$ is zero.

Since $\mathfrak{e}_{A}\subset\mathfrak{p}$, the Euler linear forms vanish in $K$.
Therefore $A_{F}\bm{\xi}_{F} = \bm{0}$, where $\bm{\xi}_{F} = (\xi_{j})_{j\in F} \in K^{F}$.
The nonzeroness of $\bm{\xi}_{F}$ yields $\Ker_{K}(A_{F})\neq 0$ and consequently $\rank_{K}(A_{F}) < |F|$.

The entries of $A_{F}$ belong to $\Z\subset\Q$.
The rank of a matrix with rational entries is unchanged after extending the coefficient field from $\Q$ to $K$.
Thus $\rank_{\Q}(A_{F})=\rank_{K}(A_{F})<|F|$.
It follows that $\Ker_{\Q}(A_{F})\neq 0$.
Choose $\bm{0}\neq\bm{h}\in\Ker_{\Q}(A_{F})$.
After multiplying by a common positive denominator, we may assume that $\bm{h}\in\Z^{F}\setminus\{\bm{0}\}$.

Let $\bm{g}\in\Z^{n}\setminus\{\bm{0}\}$ be the zero extension of $\bm{h}$ from $\Z^{F}$ to $\Z^{n}$. 
Then $A\bm{g}=A_{F}\bm{h}=\bm{0}$, so $\bm{g}\in\Ker_{\Z}(A)=L$.
Moreover, $\supp(\bm{g}) \subset F$.
\end{proof}

\begin{lemma}[Asymptotic negative support along a relation direction]
\label{lem:asymptotic-negative-support-relation-direction}
Let $\Lambda\in\mathfrak{L}_{\bm{\beta}}$, $\bm{\gamma}\in\Lambda$, and $\bm{g}\in L\setminus\{\bm{0}\}$.
Then there exist $q_{0}\in\N$ and $J\in\NS(\Lambda)$ such that
\begin{equation}
\label{eq:asymptotic-negative-support-constant}
\nsupp(\bm{\gamma}-q\bm{g})=J\quad (q\geq q_{0}).
\end{equation}
If, in addition, $\bm{w}\cdot\bm{g}>0$, then $J \in \mathcal{U}_{\bm{w}}(\Lambda)$.
\end{lemma}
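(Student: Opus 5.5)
We argue coordinatewise along the ray $q\mapsto\bm{\gamma}-q\bm{g}$, which stays in $\Lambda$ because $\bm{g}\in L$. For $j\notin E_{\Lambda}$, the coordinate $\gamma_{j}-qg_{j}$ is never an integer, so $j$ never lies in a negative support. For $j\in E_{\Lambda}$, the coordinate $\gamma_{j}-qg_{j}$ is an integer-valued affine function of $q$, and there are three cases.
\begin{enumerate}[\rm (a)]
\item If $g_{j}>0$, the coordinate tends to $-\infty$ and is eventually $\leq-1$.
\item If $g_{j}<0$, it tends to $+\infty$ and is eventually $\geq0$.
\item If $g_{j}=0$, it is constant, equal to $\gamma_{j}$.
\end{enumerate}
Since $[n]$ is finite, we choose $q_{0}\in\N$ beyond all the finitely many thresholds. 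Then, for $q\geq q_{0}$,
\[
\nsupp(\bm{\gamma}-q\bm{g})=J:=\{j\in E_{\Lambda}\mid g_{j}>0\}\cup\{j\in E_{\Lambda}\mid g_{j}=0,\ \gamma_{j}\in\Z_{<0}\}.
\]
Because $\Lambda_{J}$ contains $\bm{\gamma}-q_{0}\bm{g}$, we have $J\in\NS(\Lambda)$. This proves \eqref{eq:asymptotic-negative-support-constant}.

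For the second assertion, assume $\bm{w}\cdot\bm{g}>0$. The points $\bm{\gamma}-q\bm{g}$ for $q\geq q_{0}$ all lie in $\Lambda_{J}$. Their $\bm{w}$-values satisfy
\[
\bm{w}\cdot\bigl((\bm{\gamma}-q\bm{g})-\bm{\lambda}\bigr)=\bm{w}\cdot(\bm{\gamma}-\bm{\lambda})-q\,\bm{w}\cdot\bm{g}\longrightarrow-\infty .
\]
Hence $\Lambda_{J}$ is not $\bm{w}$-bounded below, that is, $J\in\mathcal{U}_{\bm{w}}(\Lambda)$.

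The argument is elementary, and there is no real obstacle. The only points needing care are that the non-integral coordinates contribute nothing, and that a single $q_{0}$ works for all coordinates at once, which holds because there are finitely many of them. One could alternatively phrase the boundedness step through \cref{prop:directional-boundedness-stratum-minima}: write $\bm{g}=B\bm{q}$ and note that $-\bm{q}\in R_{\Lambda,J}$ with $\bm{\omega}\cdot(-\bm{q})<0$. The direct divergence argument is simpler, however, and we would use that.
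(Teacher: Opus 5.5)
Your proposal is correct and follows essentially the same route as the paper: coordinatewise eventual constancy on $E_{\Lambda}$ via the three sign cases for $g_{j}$, a single $q_{0}$ by finiteness, membership $J\in\NS(\Lambda)$ because $\bm{\gamma}-q\bm{g}\in\Lambda$, and unboundedness from $\bm{w}\cdot\bigl((\bm{\gamma}-q\bm{g})-\bm{\gamma}\bigr)=-q(\bm{w}\cdot\bm{g})\to-\infty$. Your explicit formula for $J$ and the use of a reference vector $\bm{\lambda}$ instead of $\bm{\gamma}$ are harmless refinements, the latter justified by the reference-independence of $\bm{w}$-boundedness noted in the paper.
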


\begin{proof}
For every $j\in E_{\Lambda}$, the coordinate $\gamma_{j}-qg_{j}$ is an integer for every $q\in\N$.
If $g_{j}>0$, then $\gamma_{j}-qg_{j} \to -\infty$ as $q\to+\infty$.
If $g_{j}<0$, then $\gamma_{j}-qg_{j} \to +\infty$ as $q\to+\infty$.
If $g_{j}=0$, then the coordinate is independent of $q$.
Consequently, in all three cases, the assertion $\gamma_{j}-qg_{j}\in\Z_{<0}$ is eventually constant as $q\to+\infty$.

Since $E_{\Lambda}$ is finite, there exists $q_{0}\in\N$ such that the negative support of $\bm{\gamma}-q\bm{g}$ is independent of $q$ for $q\geq q_{0}$.
Denote this negative support by $J$.
Since $\bm{\gamma}-q\bm{g} \in \Lambda$ for $q\in\N$, one has $J\in\NS(\Lambda)$.
This proves \eqref{eq:asymptotic-negative-support-constant}.

Suppose now that $\bm{w}\cdot\bm{g}>0$.
Then $\bm{\gamma}-q\bm{g}\in\Lambda_{J}$ for $q\geq q_{0}$, and  
\[
\bm{w}\cdot \left( (\bm{\gamma}-q\bm{g})-\bm{\gamma} \right) = -q(\bm{w}\cdot\bm{g}) \longrightarrow -\infty
\]
as $q\to+\infty$.
Therefore the stratum $\Lambda_{J}$ is not $\bm{w}$-bounded below.
Hence $J\in\mathcal{U}_{\bm{w}}(\Lambda)$.
\end{proof}

\begin{theorem}[Finite length of a local constraint quotient]
\label{thm:finite-length-minimal-support-constraint-quotient}
Let $\Lambda\in\mathfrak{L}_{\bm{\beta},\bm{w}}^{\mathrm{adm}}$ and $I\in\mathcal{M}_{\bm{w}}(\Lambda)$.
Then $\sqrt{ \mathfrak{r}_{\Lambda,\bm{w},I} } = \mathfrak{m}$.
Consequently, $R_{\Lambda,\bm{w},I}$ is a nonzero finite-dimensional local $\C$-algebra and a finite-length $S$-module.
\end{theorem}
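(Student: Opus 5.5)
The plan is to show that $\mathfrak{m}$ is the only prime ideal of $S$ containing $\mathfrak{r}_{\Lambda,\bm{w},I}$. Since $\mathfrak{r}_{\Lambda,\bm{w},I}\subset\mathfrak{m}$ is proper by \eqref{eq:minimal-support-constraint-quotient-nonzero}, its radical is the intersection of the primes containing it. So this single fact gives $\sqrt{\mathfrak{r}_{\Lambda,\bm{w},I}}=\mathfrak{m}$.

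Let $\mathfrak{p}\supset\mathfrak{r}_{\Lambda,\bm{w},I}$ be prime, and let $F:=F(\mathfrak{p})$ be as in \cref{lem:prime-support-kernel-relation}.
\begin{enumerate}[\rm (1)]
\item If $F=\emptyset$, then every $\partial_{y_{j}}$ lies in $\mathfrak{p}$. Hence $\mathfrak{m}\subset\mathfrak{p}$, and maximality of $\mathfrak{m}$ gives $\mathfrak{p}=\mathfrak{m}$.
\item Suppose instead $F\neq\emptyset$. Since $\mathfrak{e}_{A}\subset\mathfrak{p}$, \cref{lem:prime-support-kernel-relation} gives $\bm{0}\neq\bm{g}\in L$ with $\supp(\bm{g})\subset F$.
\item By $L$-genericity, $\bm{w}\cdot\bm{g}\neq0$. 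Replacing $\bm{g}$ by $-\bm{g}$ if necessary (the support condition is preserved), we may assume $\bm{w}\cdot\bm{g}>0$.
\item Choose any $\bm{\gamma}\in\Lambda_{I}$, for instance $\bm{\gamma}=\bm{v}_{\Lambda,I}$. By \cref{lem:asymptotic-negative-support-relation-direction}, there are $q_{0}$ and $J\in\mathcal{U}_{\bm{w}}(\Lambda)$ with $\nsupp(\bm{\gamma}-q\bm{g})=J$ for all $q\ge q_{0}$.
\item By \eqref{eq:unbounded-support-difference-from-minimal-support-nonempty}, $J\setminus I\neq\emptyset$. The monomial $\partial_{\bm{y}}^{J\setminus I}$ lies in $\mathfrak{b}_{\Lambda,\bm{w},I}\subset\mathfrak{p}$. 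Since $\mathfrak{p}$ is prime, some $\partial_{y_{j}}$ with $j\in J\setminus I$ lies in $\mathfrak{p}$.
\end{enumerate}

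The key step is the resulting contradiction, which I expect to be the only delicate point. From $\partial_{y_{j}}\in\mathfrak{p}$ we get $j\notin F$, hence $g_{j}=0$. The $j$-th coordinate of $\bm{\gamma}-q\bm{g}$ is therefore $\gamma_{j}$ for every $q$. It follows that $j\in\nsupp(\bm{\gamma}-q\bm{g})=J$ if and only if $j\in\nsupp(\bm{\gamma})=I$. But $j\in J$ and $j\notin I$, which is a contradiction. Hence $F\neq\emptyset$ is impossible, and every prime containing $\mathfrak{r}_{\Lambda,\bm{w},I}$ equals $\mathfrak{m}$.

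For the consequences, $S$ is Noetherian, so $\sqrt{\mathfrak{r}_{\Lambda,\bm{w},I}}=\mathfrak{m}$ gives $\mathfrak{m}^{N}\subset\mathfrak{r}_{\Lambda,\bm{w},I}$ for some $N$. Thus $R_{\Lambda,\bm{w},I}$ is a quotient of $S/\mathfrak{m}^{N}$. That quotient is finite-dimensional over $\C$ and has finite length as an $S$-module, with composition factors $S/\mathfrak{m}$. The ring $R_{\Lambda,\bm{w},I}$ is local because its only prime ideal is $\mathfrak{m}/\mathfrak{r}_{\Lambda,\bm{w},I}$. It is nonzero by \eqref{eq:minimal-support-constraint-quotient-nonzero}.
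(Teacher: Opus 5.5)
Your proof is correct and follows the paper's argument essentially step for step: the prime-support kernel relation gives $\bm{g}$, $L$-genericity orients it, the asymptotic lemma produces an unbounded support $J$, and the boundary monomial $\partial_{\bm{y}}^{J\setminus I}$ contradicts primality. The differences are cosmetic. You first extract a single variable $\partial_{y_{j}}\in\mathfrak{p}$ with $j\in J\setminus I$ and show its coordinate is frozen, whereas the paper shows $J\setminus I\subset F$ directly. You also observe that any $\bm{\gamma}\in\Lambda_{I}$ works in place of $\bm{v}_{\Lambda,I}$, and you spell out the routine Noetherian step $\mathfrak{m}^{N}\subset\mathfrak{r}_{\Lambda,\bm{w},I}$ for the finite-length consequence.
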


\begin{proof}
By \eqref{eq:local-constraint-ideal-at-I}, one has $\mathfrak{r}_{\Lambda,\bm{w},I}\subset\mathfrak{m}$.
Hence it follows that $\sqrt{\mathfrak{r}_{\Lambda,\bm{w},I}}\subset\mathfrak{m}$.

Let $\mathfrak{p}$ be a prime ideal containing $\mathfrak{r}_{\Lambda,\bm{w},I}$, and put
\[
F:=\left\{j\in[n]\,\middle|\,\partial_{y_{j}}\notin\mathfrak{p}\right\}.
\]
If $F=\emptyset$, then one has $\mathfrak{p}=\mathfrak{m}$.
Suppose that $F\neq\emptyset$.
By \cref{lem:prime-support-kernel-relation}, there exists a nonzero vector $\bm{g}\in L$ such that $\supp(\bm{g})\subset F$.
After replacing $\bm{g}$ by $-\bm{g}$ if necessary, we may assume that $\bm{w}\cdot\bm{g}>0$.
Apply \cref{lem:asymptotic-negative-support-relation-direction} to the unique $\bm{w}$-minimum $\bm{v}_{\Lambda,I}\in\Lambda_{I}\subset\Lambda$ and $\bm{g}$.
There exists $J\in\mathcal{U}_{\bm{w}}(\Lambda)$ such that
\[
\nsupp(\bm{v}_{\Lambda,I}-q\bm{g})=J
\]
for every sufficiently large $q$.

Since $g_{j}=0$ for every $j\notin F$, the coordinates of $\bm{v}_{\Lambda,I}-q\bm{g}$ outside $F$ agree with those of $\bm{v}_{\Lambda,I}$.
Since $\nsupp(\bm{v}_{\Lambda,I})=I$, it follows that $J\setminus F=I\setminus F$.
In particular, $J\setminus I\subset F$.
Moreover, $J\setminus I\neq\emptyset$ because $J\in\mathcal{U}_{\bm{w}}(\Lambda)$ and $I\in\mathcal{M}_{\bm{w}}(\Lambda)\subset\mathcal{B}_{\bm{w}}(\Lambda)$.

Since $J\in\mathcal{U}_{\bm{w}}(\Lambda)$, the monomial $\partial_{\bm{y}}^{J\setminus I}=\prod_{j\in J\setminus I}\partial_{y_{j}}\in\mathfrak{b}_{\Lambda,\bm{w},I}\subset\mathfrak{r}_{\Lambda,\bm{w},I}\subset\mathfrak{p}$.
On the other hand, since $J\setminus I\subset F$, $\partial_{y_{j}}\notin\mathfrak{p}$ for every $j\in J\setminus I$.
This contradicts the primality of $\mathfrak{p}$.
Therefore $F=\emptyset$, and hence $\mathfrak{p}=\mathfrak{m}$.

We have proved that $\mathfrak{m}$ is the only prime ideal containing $\mathfrak{r}_{\Lambda,\bm{w},I}$.
Therefore $\sqrt{\mathfrak{r}_{\Lambda,\bm{w},I}}=
\mathfrak{m}$.
Consequently, $R_{\Lambda,\bm{w},I}$ is a nonzero finite-dimensional local $\C$-algebra and a finite-length $S$-module.
\end{proof}

\subsection{Finite length of the classwise and all-lattice coefficient modules}
\label{subsec:finite-length-classwise-all-lattice-directional-modules}

We now transfer the finite-length property of the local constraint quotients to the directional coefficient modules.
The cyclic submodule generated by a minimal-support generator is a quotient of the corresponding local constraint quotient, and all directional generators are generated by the minimal-support generators by \eqref{eq:directional-module-generated-by-minimal-supports}.
Fix $\Lambda\in\mathfrak{L}_{\bm{\beta},\bm{w}}^{\mathrm{adm}}$ and write $\overline{e}_{\Lambda,I}^{*}$ for the class of $e_{\Lambda,I}^{*}$ in $M_{\Lambda,\bm{w}}^{\mathrm{dir}}$.

\begin{theorem}[Finite length of the classwise and all-lattice coefficient modules]
\label{thm:finite-length-classwise-all-lattice-modules}
For every $\Lambda\in\mathfrak{L}_{\bm{\beta},\bm{w}}^{\mathrm{adm}}$, the module $M_{\Lambda,\bm{w}}^{\mathrm{dir}}$ has finite length.
Moreover, the all-lattice module $M_{\bm{\beta},\bm{w}}^{\mathrm{dir,all}}$ has finite length, and
\begin{equation}
\label{eq:all-lattice-directional-module-length-sum}
\length_{S}M_{\bm{\beta},\bm{w}}^{\mathrm{dir,all}}=\sum_{\Lambda\in\mathfrak{L}_{\bm{\beta},\bm{w}}^{\mathrm{adm}}}\length_{S}M_{\Lambda,\bm{w}}^{\mathrm{dir}}.
\end{equation}
\end{theorem}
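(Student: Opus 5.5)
The plan is to show that each cyclic submodule $S\overline{e}_{\Lambda,I}^{*}$ with $I\in\mathcal{M}_{\bm{w}}(\Lambda)$ is a quotient of the local constraint quotient $R_{\Lambda,\bm{w},I}$, which has finite length by \cref{thm:finite-length-minimal-support-constraint-quotient}. Finite length of $M_{\Lambda,\bm{w}}^{\mathrm{dir}}$ then follows from the generation statement \eqref{eq:directional-module-generated-by-minimal-supports}.

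Concretely, fix $\Lambda\in\mathfrak{L}_{\bm{\beta},\bm{w}}^{\mathrm{adm}}$ and $I\in\mathcal{M}_{\bm{w}}(\Lambda)$. I would check that the annihilator of $\overline{e}_{\Lambda,I}^{*}$ in $S$ contains $\mathfrak{r}_{\Lambda,\bm{w},I}$.
\begin{enumerate}[(i)]
\item The Euler generators $(A\partial_{\bm{y}})_{i}e_{\Lambda,I}^{*}$ lie in $U_{\Lambda}^{\mathrm{Eul}}$, so $\mathfrak{e}_{A}$ annihilates $\overline{e}_{\Lambda,I}^{*}$.
\item Let $J\in\mathcal{U}_{\bm{w}}(\Lambda)$. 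Then $J\notin\mathcal{A}_{\bm{w}}(\Lambda)$, since $J\subset J$ and $J$ is unbounded, so $\overline{e}_{\Lambda,J}^{*}=0$ by $U_{\Lambda,\bm{w}}^{\mathrm{van}}$. The transit relation for the pair $(I,J)$ reads $\partial_{\bm{y}}^{J\setminus I}\overline{e}_{\Lambda,I}^{*}=\partial_{\bm{y}}^{I\setminus J}\overline{e}_{\Lambda,J}^{*}=0$. Hence every generator of $\mathfrak{b}_{\Lambda,\bm{w},I}$ annihilates $\overline{e}_{\Lambda,I}^{*}$.
\end{enumerate}
It follows that $s\mapsto s\overline{e}_{\Lambda,I}^{*}$ induces a surjection $R_{\Lambda,\bm{w},I}\twoheadrightarrow S\overline{e}_{\Lambda,I}^{*}$. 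Therefore $\length_{S}(S\overline{e}_{\Lambda,I}^{*})\le\length_{S}R_{\Lambda,\bm{w},I}<\infty$.

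Next, by \eqref{eq:directional-module-generated-by-minimal-supports}, $M_{\Lambda,\bm{w}}^{\mathrm{dir}}$ is the sum of the finitely many submodules $S\overline{e}_{\Lambda,I}^{*}$ with $I\in\mathcal{M}_{\bm{w}}(\Lambda)$; this family is finite because $\NS(\Lambda)$ is finite. Hence $M_{\Lambda,\bm{w}}^{\mathrm{dir}}$ is a quotient of $\bigoplus_{I}S\overline{e}_{\Lambda,I}^{*}$. Finite length is preserved by finite direct sums and quotients, which gives the bound
\[
\length_{S}M_{\Lambda,\bm{w}}^{\mathrm{dir}}\le\sum_{I\in\mathcal{M}_{\bm{w}}(\Lambda)}\length_{S}R_{\Lambda,\bm{w},I}.
\]

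For the all-lattice statement, \eqref{eq:all-lattice-directional-coefficient-module-admissible-sum} identifies $M_{\bm{\beta},\bm{w}}^{\mathrm{dir,all}}$ with the direct sum over $\mathfrak{L}_{\bm{\beta},\bm{w}}^{\mathrm{adm}}$. The summands over null lattices vanish by \cref{thm:minimal-support-solutions-nonvanishing}, and the admissible family is finite by \cref{thm:finiteness-admissible-exponent-lattice-family}. Length is additive on finite direct sums, which yields \eqref{eq:all-lattice-directional-module-length-sum} together with finiteness.

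I do not expect a serious obstacle, since the hard input is \cref{thm:finite-length-minimal-support-constraint-quotient}. The one point needing care is step (ii): it requires both that unbounded supports are unacceptable and that the transit relation is imposed for all pairs in $\NS(\Lambda)$, including the pair $(I,J)$ with $J$ unbounded. Both hold by the definitions of $U_{\Lambda}^{\mathrm{tr}}$ and $U_{\Lambda,\bm{w}}^{\mathrm{van}}$.
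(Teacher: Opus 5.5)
Your proposal is correct and follows the paper's proof essentially step for step. The paper likewise shows $\mathfrak{r}_{\Lambda,\bm{w},I}\subset\Ann_{S}(\overline{e}_{\Lambda,I}^{*})$ using the Euler relations and the transit relation for $(I,J)$ with $J\in\mathcal{U}_{\bm{w}}(\Lambda)$, deduces the surjection $R_{\Lambda,\bm{w},I}\twoheadrightarrow S\overline{e}_{\Lambda,I}^{*}$, applies \eqref{eq:directional-module-generated-by-minimal-supports}, and concludes with the finite direct sum \eqref{eq:all-lattice-directional-coefficient-module-admissible-sum} and additivity of length; your explicit check that $\mathcal{U}_{\bm{w}}(\Lambda)\cap\mathcal{A}_{\bm{w}}(\Lambda)=\emptyset$, which the paper leaves implicit, and your length bound are small additions.
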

\begin{proof}
Fix $\Lambda\in\mathfrak{L}_{\bm{\beta},\bm{w}}^{\mathrm{adm}}$.
For every $I\in\mathcal{M}_{\bm{w}}(\Lambda)$, the Euler relations annihilate $\overline{e}_{\Lambda,I}^{*}$.
For every $J\in\mathcal{U}_{\bm{w}}(\Lambda)$, the directional vanishing relation gives $\overline{e}_{\Lambda,J}^{*}=0$, and the normalized transit relation for $(I,J)$ gives $\partial_{\bm{y}}^{J\setminus I}\overline{e}_{\Lambda,I}^{*}=0$.
Consequently,
\[
\mathfrak{r}_{\Lambda,\bm{w},I}\subset\Ann_{S}\left(\overline{e}_{\Lambda,I}^{*}\right),
\]
and hence there is a surjective homomorphism
\[
R_{\Lambda,\bm{w},I}\twoheadrightarrow S\overline{e}_{\Lambda,I}^{*}.
\]

Since $R_{\Lambda,\bm{w},I}$ has finite length by \cref{thm:finite-length-minimal-support-constraint-quotient}, the cyclic module $S\overline{e}_{\Lambda,I}^{*}$ has finite length as well.
By \eqref{eq:directional-module-generated-by-minimal-supports}, the module $M_{\Lambda,\bm{w}}^{\mathrm{dir}}$ is the sum of the finitely many cyclic modules $S\overline{e}_{\Lambda,I}^{*}$ with $I\in\mathcal{M}_{\bm{w}}(\Lambda)$.
Therefore $M_{\Lambda,\bm{w}}^{\mathrm{dir}}$ has finite length.
Finally, \eqref{eq:all-lattice-directional-coefficient-module-admissible-sum} expresses $M_{\bm{\beta},\bm{w}}^{\mathrm{dir,all}}$ as a finite direct sum of finite-length modules.
Thus the all-lattice module has finite length, and additivity of length on finite direct sums gives \eqref{eq:all-lattice-directional-module-length-sum}.
\end{proof}

\section{Dimension, formal realization, and realizable lowest exponents}
\label{sec:dimension-series-realization}
The preceding sections show that the all-lattice directional coefficient module is a finite direct sum of finite-length modules supported at the homogeneous maximal ideal.
We now convert this structural finiteness into a dimension formula by finite-length inverse-system duality.
We then record the explicit formal logarithmic realization and a finite criterion for all classwise realizable exponents.
\subsection{Finite-length inverse-system duality}
Recall that 
\[
\mathfrak{m}=\left\langle\partial_{y_{1}},\dots,\partial_{y_{n}}\right\rangle\subset S,
\]
and that $S$ acts on $\C[\bm{y}]$ by constant-coefficient differentiation.
\begin{proposition}[Finite-length inverse-system duality]
\label{prop:finite-length-inverse-system-duality}
Let $N$ be a finite-length $S$-module satisfying $\Supp_{S}(N)\subset\{\mathfrak{m}\}$.
Then 
\begin{equation}
\label{eq:inverse-system-dimension-equals-length}
\dim_{\C}\Hom_{S}(N,\C[\bm{y}])=\length_{S}(N).
\end{equation}
In particular, $\Hom_{S}(N,\C[\bm{y}])$ is finite-dimensional.
\end{proposition}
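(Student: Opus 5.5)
We would prove the statement by induction on $\length_{S}(N)$, using that $\C[\bm{y}]$, viewed as an $S$-module under constant-coefficient differentiation, is the injective hull of the residue field $\C=S/\mathfrak{m}$ (the Macaulay inverse system, \cite[Section~21]{Eisenbud}). Two ingredients are needed. First, $\Hom_{S}(S/\mathfrak{m},\C[\bm{y}])$ is the set of polynomials killed by every $\partial_{y_{j}}$, namely the constants; it therefore has dimension $1=\length_{S}(S/\mathfrak{m})$. Second, the functor $\Hom_{S}(-,\C[\bm{y}])$ is exact on finite-length modules supported at $\mathfrak{m}$.

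For the exactness we will use a grading-free argument. Any finite-length $N$ with support $\{\mathfrak{m}\}$ is annihilated by some power $\mathfrak{m}^{k}$. Hence every homomorphism $N\to\C[\bm{y}]$ lands in the socle-type submodule $\{p\mid\mathfrak{m}^{k}\mathbin{\bullet}_{\bm{y}}p=0\}$, which consists of the polynomials of degree $<k$. This reduces the problem to $\Hom_{S/\mathfrak{m}^{k}}(N,\C[\bm{y}]_{<k})$.

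The key step is to identify $\C[\bm{y}]_{<k}$ with the $\C$-dual $\Hom_{\C}(S/\mathfrak{m}^{k},\C)$ as $S$-modules. To do this, send $p$ to the functional $f\mapsto (f\mathbin{\bullet}_{\bm{y}}p)(0)$. The monomial pairing $\partial^{\bm{a}}\bm{y}^{\bm{b}}|_{0}=\bm{a}!\,\delta_{\bm{a}\bm{b}}$ is perfect in characteristic zero, so this map is an isomorphism, and it is visibly $S$-linear. Then $\Hom_{S}(N,\Hom_{\C}(S/\mathfrak{m}^{k},\C))\cong\Hom_{\C}(N\otimes_{S}S/\mathfrak{m}^{k},\C)=\Hom_{\C}(N,\C)$ by tensor--hom adjunction. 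It follows that $\dim_{\C}\Hom_{S}(N,\C[\bm{y}])=\dim_{\C}N$.

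Finally, $N$ has a composition series whose factors are all $S/\mathfrak{m}\cong\C$, because its support is $\{\mathfrak{m}\}$. Hence $\dim_{\C}N=\length_{S}(N)$, which proves \eqref{eq:inverse-system-dimension-equals-length}; finite-dimensionality then follows at once.

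The main obstacle is the characteristic-zero perfectness of the pairing, together with checking that the identification is $S$-linear, that is, $(f\mathbin{\bullet}(g\mathbin{\bullet}p))(0)=((fg)\mathbin{\bullet}p)(0)$. Both are routine once the pairing is written in monomials. As a cross-check, we can also cite \cite[Theorem~2.1]{SchulzeTozzo}.
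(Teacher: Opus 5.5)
Your argument is correct, but it takes a different route from the paper. The paper's proof is essentially a citation. It invokes finite-length Matlis duality with respect to the injective hull $E$ of $S/\mathfrak{m}$ and uses that this duality preserves composition length. It then identifies $E$ with $\C[\bm{y}]$ under constant-coefficient differentiation, via the divided-power model in characteristic zero (\cite[Section~21]{Eisenbud} and Schulze--Tozzo).

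You never use injective hulls. Once $\mathfrak{m}^{k}N=0$, every homomorphism lands in $\C[\bm{y}]_{<k}$. The apolarity pairing $\langle\partial_{\bm{y}}^{\bm{a}},\bm{y}^{\bm{b}}\rangle=\bm{a}!\,\delta_{\bm{a}\bm{b}}$ identifies $\C[\bm{y}]_{<k}$ $S$-linearly with $\Hom_{\C}(S/\mathfrak{m}^{k},\C)$. Tensor--hom adjunction then gives $\Hom_{S}(N,\C[\bm{y}])\cong\Hom_{\C}(N,\C)$.

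What your route buys: it is self-contained and shows exactly where characteristic zero enters, namely the factor $\bm{a}!$. It also produces an isomorphism with the $\C$-linear dual of $N$ that is natural in $N$. From that, exactness of $\Hom_{S}(-,\C[\bm{y}])$ on finite-length $\mathfrak{m}$-torsion modules follows at once (choose $k$ uniformly for a short exact sequence). The paper later relies on that exactness in the colon-ideal criterion.

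What the paper's route buys: it is shorter, and it places the statement inside Matlis duality. That framework also supplies the injectivity of $\C[\bm{y}]$ on $\mathfrak{m}$-torsion modules and the double-annihilator property used in Section~5. The price is that the identification of $E$ with $\C[\bm{y}]$ is delegated to the literature.

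Your opening framing is superfluous: induction on length, a separate socle computation, and a separate exactness step are not needed. The adjunction computation already gives $\dim_{\C}\Hom_{S}(N,\C[\bm{y}])=\dim_{\C}N$ directly. You can drop those and conclude with $\dim_{\C}N=\length_{S}(N)$.
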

\begin{proof}
This is the finite-length case of Macaulay--Matlis duality.
The injective hull of the residue field defines Matlis duality for finite-length modules, while in characteristic zero its polynomial realization is obtained by identifying the divided-power model with ordinary polynomials under constant-coefficient differentiation; see \cite[Section~21]{Eisenbud} and \cite[Theorem~2.1 and equations~(2.1)--(2.3)]{SchulzeTozzo}.
Matlis duality preserves composition length, which gives \eqref{eq:inverse-system-dimension-equals-length}.
\end{proof}
\subsection{The global length--dimension theorem}
\label{subsec:global-length-dimension-theorem}
Let $\Lambda\in\mathfrak{L}_{\bm{\beta},\bm{w}}^{\mathrm{adm}}$.
By \cref{thm:finite-length-classwise-all-lattice-modules}, the module $M_{\Lambda,\bm{w}}^{\mathrm{dir}}$ has finite length.
Moreover, the proof of \cref{thm:finite-length-minimal-support-constraint-quotient,thm:finite-length-classwise-all-lattice-modules} shows that its support is contained in $\{\mathfrak{m}\}$.
Therefore, \cref{thm:directed-coefficient-module-presentation,prop:finite-length-inverse-system-duality} give
\begin{equation}
\label{eq:classwise-length-dimension}
\dim_{\C}\Sol_{\bm{w},\Lambda}^{\mathrm{dir}}\bigl(H_{A}(\bm{\beta})\bigr)=\length_{S}\left(M_{\Lambda,\bm{w}}^{\mathrm{dir}}\right).
\end{equation}
\begin{theorem}[Global length--dimension formula]
\label{thm:global-length-dimension-formula}
The all-lattice directed formal logarithmic solution space is finite-dimensional, and
\begin{equation}
\label{eq:global-length-dimension-theorem}
\begin{aligned}
\dim_{\C}\Sol_{\bm{\beta},\bm{w}}^{\mathrm{dir,all}}\bigl(H_{A}(\bm{\beta})\bigr)
&=\length_{S}\left(M_{\bm{\beta},\bm{w}}^{\mathrm{dir,all}}\right)\\
&=\sum_{\Lambda\in\mathfrak{L}_{\bm{\beta},\bm{w}}^{\mathrm{adm}}}\length_{S}\left(M_{\Lambda,\bm{w}}^{\mathrm{dir}}\right).
\end{aligned}
\end{equation}
\end{theorem}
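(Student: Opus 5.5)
The plan is to assemble the formula from results already in place, by a direct-sum argument over exponent lattices.

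First, by the decomposition \eqref{eq:all-lattice-classwise-directed-solution-space} and \cref{thm:minimal-support-solutions-nonvanishing}, a summand $\Sol_{\bm{w},\Lambda}^{\mathrm{dir}}$ is nonzero exactly when $\Lambda\in\mathfrak{L}_{\bm{\beta},\bm{w}}^{\mathrm{adm}}$. By \cref{thm:finiteness-admissible-exponent-lattice-family}, this index set is finite. So the all-lattice solution space is a finite direct sum, and its dimension is the sum of the classwise dimensions, provided each of these is finite.

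Second, for each admissible $\Lambda$, I will invoke \eqref{eq:classwise-length-dimension}. This needs two facts: that $M_{\Lambda,\bm{w}}^{\mathrm{dir}}$ has finite length, which is \cref{thm:finite-length-classwise-all-lattice-modules}, and that $\Supp_{S}(M_{\Lambda,\bm{w}}^{\mathrm{dir}})\subset\{\mathfrak{m}\}$. I would make the support condition explicit rather than leave it implicit. By \eqref{eq:directional-module-generated-by-minimal-supports}, the module is a finite sum of the cyclic modules $S\overline{e}_{\Lambda,I}^{*}$ with $I\in\mathcal{M}_{\bm{w}}(\Lambda)$. Each of these is a quotient of $R_{\Lambda,\bm{w},I}$. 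By \cref{thm:finite-length-minimal-support-constraint-quotient}, $\sqrt{\mathfrak{r}_{\Lambda,\bm{w},I}}=\mathfrak{m}$, so each cyclic module is killed by a power of $\mathfrak{m}$. Since the sum is finite, a single power of $\mathfrak{m}$ annihilates $M_{\Lambda,\bm{w}}^{\mathrm{dir}}$, which gives the support condition. \Cref{prop:finite-length-inverse-system-duality}, combined with the isomorphism of \cref{thm:directed-coefficient-module-presentation}, then yields $\dim_{\C}\Sol_{\bm{w},\Lambda}^{\mathrm{dir}}=\length_{S}M_{\Lambda,\bm{w}}^{\mathrm{dir}}<\infty$.

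Finally, summing over the finite set $\mathfrak{L}_{\bm{\beta},\bm{w}}^{\mathrm{adm}}$ gives the second expression in \eqref{eq:global-length-dimension-theorem}, and finite-dimensionality follows. The first equality then comes from \eqref{eq:all-lattice-directional-module-length-sum}.

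As a cross-check, one can instead apply \cref{prop:finite-length-inverse-system-duality} directly to $N=M_{\bm{\beta},\bm{w}}^{\mathrm{dir,all}}$ through \eqref{eq:all-lattice-directed-solution-directional-module-Hom}. A finite direct sum of modules supported at $\mathfrak{m}$ is again supported at $\mathfrak{m}$, and $\Hom_{S}(-,\C[\bm{y}])$ turns finite direct sums into direct sums, so the two routes agree.

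The only step that is more than bookkeeping is the support condition at $\mathfrak{m}$, that is, the passage from radical to annihilator. It is routine once the finite generation by minimal supports is used. Everything else is additivity over a finite index set.
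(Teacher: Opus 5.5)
Your proposal is correct and follows the paper's proof. You reduce to the finite direct sum over $\mathfrak{L}_{\bm{\beta},\bm{w}}^{\mathrm{adm}}$ via \cref{thm:finiteness-admissible-exponent-lattice-family,thm:minimal-support-solutions-nonvanishing}, apply \eqref{eq:classwise-length-dimension} to each summand, and use additivity of length on \eqref{eq:all-lattice-directional-coefficient-module-admissible-sum}; your explicit derivation of $\Supp_{S}(M_{\Lambda,\bm{w}}^{\mathrm{dir}})\subset\{\mathfrak{m}\}$ from $\sqrt{\mathfrak{r}_{\Lambda,\bm{w},I}}=\mathfrak{m}$ and the minimal-support generators just spells out what the paper asserts in one line before \eqref{eq:classwise-length-dimension}.
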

\begin{proof}
By \cref{thm:finiteness-admissible-exponent-lattice-family,thm:minimal-support-solutions-nonvanishing}, the all-lattice directed formal logarithmic solution space is the finite direct sum
\[
\Sol_{\bm{\beta},\bm{w}}^{\mathrm{dir,all}}\bigl(H_{A}(\bm{\beta})\bigr)=\bigoplus_{\Lambda\in\mathfrak{L}_{\bm{\beta},\bm{w}}^{\mathrm{adm}}}\Sol_{\bm{w},\Lambda}^{\mathrm{dir}}\bigl(H_{A}(\bm{\beta})\bigr).
\]
Taking dimensions and applying \eqref{eq:classwise-length-dimension} to each summand gives
\[
\dim_{\C}\Sol_{\bm{\beta},\bm{w}}^{\mathrm{dir,all}}\bigl(H_{A}(\bm{\beta})\bigr)=\sum_{\Lambda\in\mathfrak{L}_{\bm{\beta},\bm{w}}^{\mathrm{adm}}}\length_{S}\left(M_{\Lambda,\bm{w}}^{\mathrm{dir}}\right).
\]
By \eqref{eq:all-lattice-directional-coefficient-module-admissible-sum} and the additivity of length on finite direct sums, the right-hand side equals $\length_{S}\left(M_{\bm{\beta},\bm{w}}^{\mathrm{dir,all}}\right)$.
This proves \eqref{eq:global-length-dimension-theorem}.
\end{proof}
\subsection{Explicit formal logarithmic realization}
\label{subsec:explicit-formal-logarithmic-realization}
Choose a reference vector $\bm{\lambda}_{\Lambda}\in\Lambda$ for every $\Lambda\in\mathfrak{L}_{\bm{\beta},\bm{w}}^{\mathrm{adm}}$, and write $\bm{\lambda}:=(\bm{\lambda}_{\Lambda})_{\Lambda\in\mathfrak{L}_{\bm{\beta},\bm{w}}^{\mathrm{adm}}}$.
For each $\Lambda\in\mathfrak{L}_{\bm{\beta},\bm{w}}^{\mathrm{adm}}$, combine the isomorphism \eqref{eq:directional-coefficient-space-Hom}, inverse normalization \eqref{eq:raw-coefficient-reconstruction}, and the formal logarithmic substitution map $\Phi_{\widetilde{\Lambda}}$ from \eqref{eq:ambient-formal-logarithmic-substitution}.
This defines a linear map
\[
\Ser_{\Lambda,\bm{w}}^{\bm{\lambda}_{\Lambda}}:\Hom_{S}\left(M_{\Lambda,\bm{w}}^{\mathrm{dir}},\C[\bm{y}]\right)\longrightarrow\Sol_{\bm{w},\Lambda}^{\mathrm{dir}}\bigl(H_{A}(\bm{\beta})\bigr).
\]
\begin{theorem}[Formal logarithmic realization]
\label{thm:formal-logarithmic-realization}
For every $\Lambda\in\mathfrak{L}_{\bm{\beta},\bm{w}}^{\mathrm{adm}}$, the map $\Ser_{\Lambda,\bm{w}}^{\bm{\lambda}_{\Lambda}}$ is a linear isomorphism.
Their finite direct sum of isomorphisms
\begin{equation}
\label{eq:global-series-realization-map}
\Ser_{\bm{\beta},\bm{w}}^{\bm{\lambda}}:=\bigoplus_{\Lambda\in\mathfrak{L}_{\bm{\beta},\bm{w}}^{\mathrm{adm}}}\Ser_{\Lambda,\bm{w}}^{\bm{\lambda}_{\Lambda}}
\end{equation}
is a linear isomorphism
\[
\Hom_{S}\left(M_{\bm{\beta},\bm{w}}^{\mathrm{dir,all}},\C[\bm{y}]\right)
\xrightarrow{\sim}
\Sol_{\bm{\beta},\bm{w}}^{\mathrm{dir,all}}\bigl(H_{A}(\bm{\beta})\bigr).
\]
Explicitly, let $(c_{\Lambda,I})_{I\in\NS(\Lambda)}$ be the normalized coefficient family corresponding to a homomorphism in $\Hom_{S}\left(M_{\Lambda,\bm{w}}^{\mathrm{dir}},\C[\bm{y}]\right)$.
For $\bm{\gamma}\in\Lambda_{I}$, define
\begin{equation}
\label{eq:direct-raw-coefficient-from-directional-family}
r_{\Lambda,\bm{\gamma}}^{(\bm{\lambda}_{\Lambda})}:=\left(\widetilde{d}_{\bm{\lambda}_{\Lambda}\leftarrow\bm{\gamma}}\mathbin{\bullet}_{\bm{y}}\right)^{-1}\left(\widetilde{d}_{\bm{\gamma}\leftarrow\bm{\lambda}_{\Lambda}}\mathbin{\bullet}_{\bm{y}}c_{\Lambda,I}\right).
\end{equation}
Then the corresponding classwise solution is
\begin{equation}
\label{eq:explicit-realized-logarithmic-series}
\varphi_{\Lambda}:=\Phi_{\widetilde{\Lambda}}\left((r_{\Lambda,\bm{\gamma}}^{(\bm{\lambda}_{\Lambda})})_{\bm{\gamma}\in\Lambda}\right)=\sum_{\bm{\gamma}\in\Lambda}\bm{x}^{\bm{\gamma}}\,r_{\Lambda,\bm{\gamma}}^{(\bm{\lambda}_{\Lambda})}(\log\bm{x}).
\end{equation}
\end{theorem}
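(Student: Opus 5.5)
The map $\Ser_{\Lambda,\bm{w}}^{\bm{\lambda}_{\Lambda}}$ is by construction a composite of three maps, and the plan is to show that each one is a linear isomorphism. The first is the inverse of the evaluation isomorphism \eqref{eq:directional-coefficient-space-Hom}. It sends a homomorphism $f\in\Hom_{S}(M_{\Lambda,\bm{w}}^{\mathrm{dir}},\C[\bm{y}])$ to the family $c_{\Lambda,I}:=f(\overline{e}_{\Lambda,I}^{*})$, $I\in\NS(\Lambda)$, and this family lies in $\Coeff_{\bm{w}}^{\mathrm{dir}}(\Lambda)$. The second is the inverse of the normalization isomorphism \eqref{eq:classwise-directional-normalization-isomorphism}. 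By \cref{thm:classwise-directional-coefficient-characterization}, this inverse is given exactly by the reconstruction formula \eqref{eq:raw-coefficient-reconstruction}, which is \eqref{eq:direct-raw-coefficient-from-directional-family} with $\bm{\lambda}=\bm{\lambda}_{\Lambda}$, followed by the substitution map $\Phi_{\widetilde{\Lambda}}$. The third is $\Phi_{\widetilde{\Lambda}}$ itself, which is the inverse of the coefficient extraction isomorphism $\Psi_{\widetilde{\Lambda}}$.

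The concrete steps are as follows. First, check that \eqref{eq:direct-raw-coefficient-from-directional-family} is well defined. The operators $\widetilde d_{\bm{\lambda}_{\Lambda}\leftarrow\bm{\gamma}}\mathbin{\bullet}_{\bm{y}}$ are automorphisms of $\C[\bm{y}]$ by \cref{lem:reduced-transit-identities}(1), so the inverse makes sense. Second, apply the direction (2)$\Rightarrow$(1) of \cref{thm:classwise-directional-coefficient-characterization}. The raw family obtained this way satisfies the raw Euler equations \eqref{eq:raw-Euler-equations} and the raw transit equations \eqref{eq:raw-transit-equations}, and the series $\varphi_{\Lambda}$ in \eqref{eq:explicit-realized-logarithmic-series} is $\bm{w}$-directed. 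Hence $\varphi_{\Lambda}\in\Sol_{\bm{w},\Lambda}^{\mathrm{dir}}(H_{A}(\bm{\beta}))$, so the map lands in the stated target.

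Third, prove injectivity. The normalization \eqref{eq:normalized-coefficient} applied to the raw family returns $c_{\Lambda,I}$, because \eqref{eq:normalized-coefficient} and \eqref{eq:raw-coefficient-reconstruction} are mutually inverse composites of invertible operators. Hence distinct families give distinct series. Fourth, prove surjectivity. Given a solution $\varphi$, the direction (1)$\Rightarrow$(2) of \cref{thm:classwise-directional-coefficient-characterization} produces a family in $\Coeff_{\bm{w}}^{\mathrm{dir}}(\Lambda)$, and \eqref{eq:raw-coefficient-reconstruction} shows that $\varphi$ is the image of that family.

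For the global statement, take the direct sum over the finite set $\mathfrak{L}_{\bm{\beta},\bm{w}}^{\mathrm{adm}}$, which is finite by \cref{thm:finiteness-admissible-exponent-lattice-family}. Use the decomposition \eqref{eq:all-lattice-directional-coefficient-module-admissible-sum} of the all-lattice module, together with the fact that $\Hom_{S}(-,\C[\bm{y}])$ turns finite direct sums into direct sums. On the solution side, use the finite direct-sum decomposition of $\Sol_{\bm{\beta},\bm{w}}^{\mathrm{dir,all}}$ recorded after \cref{thm:finiteness-admissible-exponent-lattice-family}. The summands with $\Lambda\notin\mathfrak{L}_{\bm{\beta},\bm{w}}^{\mathrm{adm}}$ vanish on both sides by \cref{thm:minimal-support-solutions-nonvanishing}.

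The main obstacle is only a bookkeeping point: the composite of the isomorphisms in \cref{thm:directed-coefficient-module-presentation} must coincide with the explicit formula \eqref{eq:direct-raw-coefficient-from-directional-family}, with the reference vector $\bm{\lambda}_{\Lambda}$ in place of $\bm{\lambda}$. The normalization inverse is determined by the choice of reference, so this agreement holds by definition once $\bm{\lambda}=\bm{\lambda}_{\Lambda}$ is fixed. By the remark following \cref{thm:directed-coefficient-module-presentation}, the module side does not depend on that choice.
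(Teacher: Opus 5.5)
Your proposal is correct and follows the paper's proof. Both write $\Ser_{\Lambda,\bm{w}}^{\bm{\lambda}_{\Lambda}}$ as the composite of the evaluation identification \eqref{eq:directional-coefficient-space-Hom}, inverse normalization \eqref{eq:raw-coefficient-reconstruction}, and $\Phi_{\widetilde{\Lambda}}$, get bijectivity from \cref{thm:classwise-directional-coefficient-characterization,thm:directed-coefficient-module-presentation}, and take the direct sum over the finite admissible family; you simply spell out the well-definedness, injectivity, and surjectivity checks that the paper absorbs into those citations.
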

\begin{proof}
For a fixed admissible exponent lattice $\Lambda$, the map $\Ser_{\Lambda,\bm{w}}^{\bm{\lambda}_{\Lambda}}$ is the composite of the canonical identification in \eqref{eq:directional-coefficient-space-Hom}, inverse normalization in \eqref{eq:raw-coefficient-reconstruction}, and formal logarithmic substitution in \eqref{eq:ambient-formal-logarithmic-substitution}.
The first map identifies a homomorphism with its full normalized coefficient family, the second reconstructs its unique raw coefficient family, and the third converts that family into a formal logarithmic series.
By \cref{thm:classwise-directional-coefficient-characterization,thm:directed-coefficient-module-presentation}, this composite is a linear isomorphism onto $\Sol_{\bm{w},\Lambda}^{\mathrm{dir}}\bigl(H_{A}(\bm{\beta})\bigr)$, and its explicit formulas are \eqref{eq:direct-raw-coefficient-from-directional-family} and \eqref{eq:explicit-realized-logarithmic-series}.
The global assertion follows by taking the finite direct sum over $\mathfrak{L}_{\bm{\beta},\bm{w}}^{\mathrm{adm}}$ and using \eqref{eq:all-lattice-directional-coefficient-module-admissible-sum}.
\end{proof}

\subsection{A finite criterion for classwise realizable exponents}
\label{subsec:finite-criterion-all-classwise-realizable-exponents}
Fix $\Lambda\in\mathfrak{L}_{\bm{\beta}}$.
For $K\in\mathcal{A}_{\bm{w}}(\Lambda)$, define the coordinate projection
\[
\pi_{\Lambda,K}:\Coeff_{\bm{w}}^{\mathrm{dir}}(\Lambda)\longrightarrow\C[\bm{y}],\quad (c_{\Lambda,J})_{J\in\NS(\Lambda)}\longmapsto c_{\Lambda,K}.
\]
The finite set $\{\bm{v}_{\Lambda,K}\mid K\in\mathcal{A}_{\bm{w}}(\Lambda)\}$ is totally ordered by $\leq_{\bm{w},\Lambda}$.
For $K\in\mathcal{A}_{\bm{w}}(\Lambda)$, put
\[
\mathcal{A}_{\bm{w}}^{<K}(\Lambda):=\left\{J\in\mathcal{A}_{\bm{w}}(\Lambda)\,\middle|\,\bm{v}_{\Lambda,J}<_{\bm{w},\Lambda}\bm{v}_{\Lambda,K}\right\}
\]
and
\[
\pi_{\Lambda,<K}:=\bigoplus_{J\in\mathcal{A}_{\bm{w}}^{<K}(\Lambda)}\pi_{\Lambda,J}.
\]
If $\mathcal{A}_{\bm{w}}^{<K}(\Lambda)=\emptyset$, the target of $\pi_{\Lambda,<K}$ is the zero vector space and $\pi_{\Lambda,<K}=0$.
\begin{theorem}[Finite criterion for classwise realizable exponents]
\label{thm:finite-criterion-all-classwise-realizable-exponents}
Let $K\in\mathcal{A}_{\bm{w}}(\Lambda)$.
Then the following conditions are equivalent.
\begin{enumerate}[\rm (1)]
\item One has $\bm{v}_{\Lambda,K}\in\Rexp_{\bm{w}}(\Lambda)$.
\item There exists $\bm{c}=(c_{\Lambda,J})_{J\in\NS(\Lambda)}\in\Coeff_{\bm{w}}^{\mathrm{dir}}(\Lambda)$ such that
\[
c_{\Lambda,J}=0\quad\left(J\in\mathcal{A}_{\bm{w}}^{<K}(\Lambda)\right),\quad c_{\Lambda,K}\neq 0.
\]
\item One has
\begin{equation}
\label{eq:realizable-coordinate-kernel-noninclusion}
\Ker(\pi_{\Lambda,<K})\not\subset\Ker(\pi_{\Lambda,K}).
\end{equation}
\end{enumerate}
Consequently,
\begin{equation}
\label{eq:all-classwise-realizable-exponents-finite-criterion}
\Rexp_{\bm{w}}(\Lambda)=\left\{\bm{v}_{\Lambda,K}\,\middle|\,K\in\mathcal{A}_{\bm{w}}(\Lambda),\ \Ker(\pi_{\Lambda,<K})\not\subset\Ker(\pi_{\Lambda,K})\right\}.
\end{equation}
\end{theorem}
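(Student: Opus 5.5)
The proof goes through \cref{thm:classwise-directional-coefficient-characterization}, which lets us read lowest exponents off the active family. The point is that, for a nonzero $\varphi\in\Sol_{\bm{w},\Lambda}^{\mathrm{dir}}(H_{A}(\bm{\beta}))$ with normalized family $\bm{c}$, \eqref{eq:lowest-exponent-from-active-stratum-minima} gives $\lexp_{\bm{w}}(\varphi)=\min\{\bm{v}_{\Lambda,J}\mid J\in\Act_{\Lambda}(\bm{c})\}$, and \eqref{eq:support-as-union-of-active-strata} gives $\Act_{\Lambda}(\bm{c})\subset\mathcal{A}_{\bm{w}}(\Lambda)$. I will also use that the map $K\mapsto\bm{v}_{\Lambda,K}$ is injective on $\mathcal{A}_{\bm{w}}(\Lambda)$. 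This holds because distinct negative supports give disjoint strata and $\bm{v}_{\Lambda,K}\in\Lambda_{K}$. Consequently the finite set $\{\bm{v}_{\Lambda,J}\}$ is totally ordered by $\leq_{\bm{w},\Lambda}$ without ties between distinct supports.

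For \textup{(1)}$\Rightarrow$\textup{(2)}, take $\varphi$ with $\lexp_{\bm{w}}(\varphi)=\bm{v}_{\Lambda,K}$, and let $\bm{c}$ be its normalized family via \eqref{eq:classwise-directional-normalization-isomorphism}.
\begin{itemize}
\item The minimum over the active family equals $\bm{v}_{\Lambda,K}$. It is attained at some active $J_{0}$, and injectivity forces $J_{0}=K$, so $c_{\Lambda,K}\neq0$.
\item Suppose some $J\in\mathcal{A}_{\bm{w}}^{<K}(\Lambda)$ were active. Then the minimum would be at most $\bm{v}_{\Lambda,J}<_{\bm{w},\Lambda}\bm{v}_{\Lambda,K}$, a contradiction. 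Hence $c_{\Lambda,J}=0$ for all such $J$.
\end{itemize}

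For \textup{(2)}$\Rightarrow$\textup{(1)}, take such a $\bm{c}$ and let $\varphi$ be the series given by \cref{thm:directed-coefficient-module-presentation}; it is nonzero because $c_{\Lambda,K}\neq0$.
\begin{itemize}
\item The active family lies in $\mathcal{A}_{\bm{w}}(\Lambda)$, contains $K$, and avoids $\mathcal{A}_{\bm{w}}^{<K}(\Lambda)$.
\item Every active $J\neq K$ therefore satisfies $\bm{v}_{\Lambda,K}<_{\bm{w},\Lambda}\bm{v}_{\Lambda,J}$, by totality of the order and injectivity.
\item So the minimum in \eqref{eq:lowest-exponent-from-active-stratum-minima} is $\bm{v}_{\Lambda,K}$, which is statement \textup{(1)}.
\end{itemize}

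The equivalence \textup{(2)}$\Leftrightarrow$\textup{(3)} is a restatement. Condition \textup{(2)} says exactly that some $\bm{c}\in\Ker(\pi_{\Lambda,<K})$ has $\pi_{\Lambda,K}(\bm{c})\neq0$, i.e.\ that $\Ker(\pi_{\Lambda,<K})\not\subset\Ker(\pi_{\Lambda,K})$. In the degenerate case $\mathcal{A}_{\bm{w}}^{<K}(\Lambda)=\emptyset$ we have $\pi_{\Lambda,<K}=0$, its kernel is all of $\Coeff_{\bm{w}}^{\mathrm{dir}}(\Lambda)$, and the same reading holds.

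For the final formula \eqref{eq:all-classwise-realizable-exponents-finite-criterion}, the inclusion $\supseteq$ is \textup{(3)}$\Rightarrow$\textup{(1)}. For $\subseteq$, every realizable $\bm{v}=\lexp_{\bm{w}}(\varphi)$ equals $\bm{v}_{\Lambda,K}$ for some active, hence acceptable, $K$ by \eqref{eq:lowest-exponent-from-active-stratum-minima}; then \textup{(1)}$\Rightarrow$\textup{(3)} applies to that $K$. The only step needing care is the injectivity of $K\mapsto\bm{v}_{\Lambda,K}$, which makes the strict comparisons well defined; it is settled by the disjointness of the strata.
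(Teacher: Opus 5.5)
Your proposal is correct and follows essentially the same route as the paper: both read $\lexp_{\bm{w}}(\varphi)$ off \eqref{eq:lowest-exponent-from-active-stratum-minima} as the least stratum minimum over the active (hence acceptable) supports, and both treat \textup{(2)}$\Leftrightarrow$\textup{(3)} as a restatement. You also make explicit two points the paper leaves implicit: the injectivity of $K\mapsto\bm{v}_{\Lambda,K}$, which follows from disjointness of the strata, and the fact that every realizable exponent equals $\bm{v}_{\Lambda,K}$ for some active acceptable $K$, which is what the inclusion $\subseteq$ in \eqref{eq:all-classwise-realizable-exponents-finite-criterion} requires.
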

\begin{proof}
Let $\bm{c}\in\Coeff_{\bm{w}}^{\mathrm{dir}}(\Lambda)$ be nonzero, and let $\varphi\in\Sol_{\bm{w},\Lambda}^{\mathrm{dir}}\bigl(H_{A}(\bm{\beta})\bigr)$ be its corresponding classwise solution.
By \eqref{eq:support-as-union-of-active-strata}, the support of $\varphi$ is the disjoint union of the strata $\Lambda_{J}$ for which $c_{\Lambda,J}\neq0$.
By \eqref{eq:lowest-exponent-from-active-stratum-minima}, its lowest exponent is the least element of the finite set
\[
\left\{
\bm{v}_{\Lambda,J}
\,\middle|\,
c_{\Lambda,J}\neq0
\right\}.
\]
Therefore $\lexp_{\bm{w}}(\varphi)=\bm{v}_{\Lambda,K}$ if and only if $c_{\Lambda,K}\neq0$ and $c_{\Lambda,J}=0$ for every $J\in\mathcal{A}_{\bm{w}}^{<K}(\Lambda)$.
This proves the equivalence of \textup{(1)} and \textup{(2)}.
Condition \textup{(2)} states precisely that there exists an element of $\Ker(\pi_{\Lambda,<K})$ that does not belong to $\Ker(\pi_{\Lambda,K})$.
Hence \textup{(2)} and \textup{(3)} are equivalent.
Applying this equivalence to every $K\in\mathcal{A}_{\bm{w}}(\Lambda)$ gives \eqref{eq:all-classwise-realizable-exponents-finite-criterion}.
\end{proof}

\subsection{A polynomial colon-ideal criterion for classwise realizability}
\label{subsec:polynomial-colon-ideal-criterion-classwise-realizability}
We now reformulate the kernel noninclusion in \eqref{eq:realizable-coordinate-kernel-noninclusion} as a colon-ideal condition in $S$.
This reformulation uses only the finite directional coefficient module and the monomial form of the normalized transit relations, and therefore does not require the homogeneity of $A$.
Fix $\Lambda\in\mathfrak{L}_{\bm{\beta}}$ and $K\in\mathcal{A}_{\bm{w}}(\Lambda)$.
Recall that
\[
F_{\Lambda}=\bigoplus_{I\in\NS(\Lambda)}S e_{\Lambda,I}^{*}
\]
and that $U_{\Lambda,\bm{w}}^{\mathrm{dir}}\subset F_{\Lambda}$ is the directional relation submodule.
Define
\begin{equation}
\label{eq:earlier-vanishing-relation-submodule}
U_{\Lambda,\bm{w}}^{<K}:=U_{\Lambda,\bm{w}}^{\mathrm{dir}}+\sum_{J\in\mathcal{A}_{\bm{w}}^{<K}(\Lambda)}S e_{\Lambda,J}^{*}\subset F_{\Lambda}.
\end{equation}
Thus the additional generators in \eqref{eq:earlier-vanishing-relation-submodule} force all acceptable components whose stratum minima precede $\bm{v}_{\Lambda,K}$ with respect to $\leq_{\bm{w}}$ to vanish.

Define an $S$-linear monomial map
\begin{equation}
\label{eq:classwise-monomial-elimination-map}
\mu_{\Lambda}:
F_{\Lambda}\longrightarrow S,\quad \mu_{\Lambda}(e_{\Lambda,I}^{*}):=\partial_{\bm{y}}^{I}\quad (I\in\NS(\Lambda)).
\end{equation}
Put
\begin{equation}
\label{eq:classwise-negative-support-monomial-ideal}
\mathfrak{M}_{\Lambda}:=\left\langle\partial_{\bm{y}}^{I}\,\middle|\,I\in\NS(\Lambda)\right\rangle\subset S.
\end{equation}
For $K\in\mathcal{A}_{\bm{w}}(\Lambda)$, put
\begin{equation}
\label{eq:earlier-or-unacceptable-support-family}
\mathcal{V}_{\Lambda,\bm{w}}^{<K}:=\left(\NS(\Lambda)\setminus\mathcal{A}_{\bm{w}}(\Lambda)\right)\cup \mathcal{A}_{\bm{w}}^{<K}(\Lambda),
\end{equation}
and define
\begin{align}
\label{eq:earlier-or-unacceptable-monomial-ideal}
\mathfrak{P}_{\Lambda,\bm{w}}^{<K}
&:=
\left\langle
\partial_{\bm{y}}^{J}
\,\middle|\,
J\in\mathcal{V}_{\Lambda,\bm{w}}^{<K}
\right\rangle,
\\
\label{eq:classwise-realizability-elimination-ideal}
\mathfrak{Q}_{\Lambda,\bm{w}}^{<K}
&:=
\mathfrak{e}_{A}\mathfrak{M}_{\Lambda}
+
\mathfrak{P}_{\Lambda,\bm{w}}^{<K}
\subset
S.
\end{align}
The empty generating family in \eqref{eq:earlier-or-unacceptable-monomial-ideal} is understood to generate the zero ideal.

\begin{lemma}[Monomial elimination of the earlier-vanishing system]
\label{lem:monomial-elimination-earlier-vanishing-system}
The map $\mu_{\Lambda}$ in \eqref{eq:classwise-monomial-elimination-map} satisfies
\begin{equation}
\label{eq:kernel-classwise-monomial-elimination-map}
\Ker(\mu_{\Lambda})=U_{\Lambda}^{\mathrm{tr}}.
\end{equation}
Moreover,
\begin{equation}
\label{eq:image-earlier-vanishing-relation-submodule}
\mu_{\Lambda}\left(U_{\Lambda,\bm{w}}^{<K}\right)=\mathfrak{Q}_{\Lambda,\bm{w}}^{<K}
\end{equation}
and
\begin{equation}
\label{eq:preimage-realizability-elimination-ideal}
\mu_{\Lambda}^{-1}\left(\mathfrak{Q}_{\Lambda,\bm{w}}^{<K}\right)=U_{\Lambda,\bm{w}}^{<K}.
\end{equation}
\end{lemma}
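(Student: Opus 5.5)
The plan is to prove \eqref{eq:kernel-classwise-monomial-elimination-map} first, by recognizing $\mu_{\Lambda}$ as the first map of a presentation of the monomial ideal $\mathfrak{M}_{\Lambda}$. The other two identities then follow formally.

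For the inclusion $U_{\Lambda}^{\mathrm{tr}}\subset\Ker(\mu_{\Lambda})$, it suffices to check each generator:
\[
\mu_{\Lambda}\bigl(\partial_{\bm{y}}^{J\setminus I}e_{\Lambda,I}^{*}-\partial_{\bm{y}}^{I\setminus J}e_{\Lambda,J}^{*}\bigr)=\partial_{\bm{y}}^{I\cup J}-\partial_{\bm{y}}^{I\cup J}=0.
\]
For the reverse inclusion, I would use the $\N^{n}$-grading of $F_{\Lambda}$ in which $e_{\Lambda,I}^{*}$ has degree $\chi_{I}$, the indicator vector of $I$. The map $\mu_{\Lambda}$ is homogeneous of degree zero, so its kernel is spanned by homogeneous elements. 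A homogeneous kernel element of degree $\bm{a}$ has the form $\sum_{I}c_{I}\partial_{\bm{y}}^{\bm{a}-\chi_{I}}e_{\Lambda,I}^{*}$. Here the sum runs over those $I$ with $\chi_{I}\le\bm{a}$, and the condition to be in the kernel is $\sum_{I}c_{I}=0$.

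Fix one such $I_{0}$. Each term $\partial_{\bm{y}}^{\bm{a}-\chi_{I}}e_{\Lambda,I}^{*}-\partial_{\bm{y}}^{\bm{a}-\chi_{I_{0}}}e_{\Lambda,I_{0}}^{*}$ is then a monomial multiple of the transit generator for $(I,I_{0})$. The multiplier is $\partial_{\bm{y}}^{\bm{a}-\chi_{I\cup I_{0}}}$, which is a genuine monomial because $\chi_{I\cup I_{0}}\le\bm{a}$. Since $\sum_{I}c_{I}=0$, the element is a linear combination of these differences, hence lies in $U_{\Lambda}^{\mathrm{tr}}$. This is the standard Taylor/syzygy argument for monomial ideals and is the only step with real content.

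Next I would compute the image in \eqref{eq:image-earlier-vanishing-relation-submodule} generator by generator, using that $U_{\Lambda,\bm{w}}^{<K}$ is generated by four kinds of elements.
\begin{itemize}
\item The Euler generators $(A\partial_{\bm{y}})_{i}e_{\Lambda,I}^{*}$ map to $(A\partial_{\bm{y}})_{i}\partial_{\bm{y}}^{I}$. These products generate $\mathfrak{e}_{A}\mathfrak{M}_{\Lambda}$.
\item The transit generators map to $0$.
\item The vanishing generators $e_{\Lambda,J}^{*}$, for $J$ unacceptable, map to $\partial_{\bm{y}}^{J}$.
\item The added generators $e_{\Lambda,J}^{*}$, for $J\in\mathcal{A}_{\bm{w}}^{<K}(\Lambda)$, also map to $\partial_{\bm{y}}^{J}$.
\end{itemize}
Together the last two kinds are exactly the generators of $\mathfrak{P}_{\Lambda,\bm{w}}^{<K}$, indexed by $\mathcal{V}_{\Lambda,\bm{w}}^{<K}$. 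Since $\mu_{\Lambda}$ is $S$-linear, the image of the submodule is the ideal generated by these images, which is $\mathfrak{Q}_{\Lambda,\bm{w}}^{<K}$.

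Finally, \eqref{eq:preimage-realizability-elimination-ideal} follows from the first two identities. Certainly $U_{\Lambda,\bm{w}}^{<K}\subset\mu_{\Lambda}^{-1}(\mu_{\Lambda}(U_{\Lambda,\bm{w}}^{<K}))$. Conversely, suppose $\mu_{\Lambda}(f)\in\mathfrak{Q}_{\Lambda,\bm{w}}^{<K}$. By the image identity we can choose $u\in U_{\Lambda,\bm{w}}^{<K}$ with $\mu_{\Lambda}(u)=\mu_{\Lambda}(f)$. Then $f-u\in\Ker(\mu_{\Lambda})=U_{\Lambda}^{\mathrm{tr}}\subset U_{\Lambda,\bm{w}}^{<K}$, so $f\in U_{\Lambda,\bm{w}}^{<K}$.

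The main obstacle is the kernel computation. The grading argument should handle it, and the point to be careful about is that every $I$ appearing in a degree-$\bm{a}$ component satisfies $\chi_{I}\le\bm{a}$, which is what makes the multipliers genuine monomials.
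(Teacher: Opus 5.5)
Your proposal is correct and follows essentially the same route as the paper. You identify $\Ker(\mu_{\Lambda})$ with the syzygies of the squarefree monomials $\partial_{\bm{y}}^{I}$, compute $\mu_{\Lambda}(U_{\Lambda,\bm{w}}^{<K})$ generator by generator, and deduce the preimage identity from the kernel and image identities. The only difference is that the paper cites the standard fact that monomial syzygies are generated by the pairwise least-common-multiple syzygies, whereas your $\N^{n}$-graded argument proves it directly, and does so correctly.
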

\begin{proof}
The image of $\mu_{\Lambda}$ is the monomial ideal $\mathfrak{M}_{\Lambda}$ generated by the squarefree monomials $\partial_{\bm{y}}^{I}$ with $I\in\NS(\Lambda)$.
Hence $\Ker(\mu_{\Lambda})$ is the first syzygy module of this monomial generating family.
The syzygy module of a finite family of monomials is generated by the pairwise least-common-multiple syzygies.
For $I,J\in\NS(\Lambda)$, the corresponding syzygy is
\[
\frac{\partial_{\bm{y}}^{I\cup J}}{\partial_{\bm{y}}^{I}}e_{\Lambda,I}^{*}-\frac{\partial_{\bm{y}}^{I\cup J}}{\partial_{\bm{y}}^{J}}e_{\Lambda,J}^{*}=\partial_{\bm{y}}^{J\setminus I}e_{\Lambda,I}^{*}-\partial_{\bm{y}}^{I\setminus J}e_{\Lambda,J}^{*}.
\]
These are precisely the generators of $U_{\Lambda}^{\mathrm{tr}}$, which proves \eqref{eq:kernel-classwise-monomial-elimination-map}.

For $I\in\NS(\Lambda)$ and $i=1,\dots,d$, one has
\[
\mu_{\Lambda}\left((A\partial_{\bm{y}})_{i}e_{\Lambda,I}^{*}\right)=(A\partial_{\bm{y}})_{i}\partial_{\bm{y}}^{I}.
\]
These images generate $\mathfrak{e}_{A}\mathfrak{M}_{\Lambda}$.
For every $J\in\NS(\Lambda)\setminus\mathcal{A}_{\bm{w}}(\Lambda)$, the directional vanishing generator $e_{\Lambda,J}^{*}$ is mapped to $\partial_{\bm{y}}^{J}$.
For every $J\in\mathcal{A}_{\bm{w}}^{<K}(\Lambda)$, the additional generator $e_{\Lambda,J}^{*}$ in \eqref{eq:earlier-vanishing-relation-submodule} is also mapped to $\partial_{\bm{y}}^{J}$.
Finally, \eqref{eq:kernel-classwise-monomial-elimination-map} gives $\mu_{\Lambda}(U_{\Lambda}^{\mathrm{tr}})=0$.
These observations prove \eqref{eq:image-earlier-vanishing-relation-submodule}.

The inclusion from right to left in \eqref{eq:preimage-realizability-elimination-ideal} follows from \eqref{eq:image-earlier-vanishing-relation-submodule}.
Conversely, let $\eta\in F_{\Lambda}$ satisfy $\mu_{\Lambda}(\eta)\in\mathfrak{Q}_{\Lambda,\bm{w}}^{<K}$.
By \eqref{eq:image-earlier-vanishing-relation-submodule}, there exists $\eta'\in U_{\Lambda,\bm{w}}^{<K}$ such that $\mu_{\Lambda}(\eta')=\mu_{\Lambda}(\eta)$.
Then
\[
\eta-\eta'\in\Ker(\mu_{\Lambda})=U_{\Lambda}^{\mathrm{tr}}\subset U_{\Lambda,\bm{w}}^{<K},
\]
and hence $\eta\in U_{\Lambda,\bm{w}}^{<K}$.
This proves \eqref{eq:preimage-realizability-elimination-ideal}.
\end{proof}

Define the $K$-th earlier-vanishing coefficient space by
\begin{equation}
\label{eq:Kth-earlier-vanishing-coefficient-space}
\mathcal{C}_{\Lambda,\bm{w},K}^{<}:=\pi_{\Lambda,K}\left(\Ker(\pi_{\Lambda,<K})\right)\subset\C[\bm{y}].
\end{equation}
Thus $\mathcal{C}_{\Lambda,\bm{w},K}^{<}$ consists of all normalized $K$-components that occur in directional coefficient families whose earlier acceptable components vanish.

\begin{theorem}[Polynomial colon-ideal criterion for classwise realizability]
\label{thm:polynomial-colon-ideal-criterion-classwise-realizability}
Let $\Lambda\in\mathfrak{L}_{\bm{\beta}}$ and $K\in\mathcal{A}_{\bm{w}}(\Lambda)$.
Then
\begin{equation}
\label{eq:earlier-vanishing-coefficient-space-colon-inverse-system}
\mathcal{C}_{\Lambda,\bm{w},K}^{<}=\Sol_{\C[\bm{y}]}\left(\mathfrak{Q}_{\Lambda,\bm{w}}^{<K}:\partial_{\bm{y}}^{K}\right).
\end{equation}
Equivalently,
\begin{equation}
\label{eq:annihilator-earlier-vanishing-Kth-component}
\Ann_{S}\left(\mathcal{C}_{\Lambda,\bm{w},K}^{<}\right)=\mathfrak{Q}_{\Lambda,\bm{w}}^{<K}:\partial_{\bm{y}}^{K}.
\end{equation}
Moreover, the following conditions are equivalent.
\begin{enumerate}[\rm (1)]
\item $\bm{v}_{\Lambda,K}\in\Rexp_{\bm{w}}(\Lambda)$.
\item $\mathcal{C}_{\Lambda,\bm{w},K}^{<}\neq0$.
\item 
\begin{equation}
\label{eq:proper-colon-ideal-realizability-criterion}
\mathfrak{Q}_{\Lambda,\bm{w}}^{<K}:
\partial_{\bm{y}}^{K}
\neq
S.
\end{equation}
\end{enumerate}
Consequently,
\begin{equation}
\label{eq:classwise-realizable-exponents-colon-criterion}
\Rexp_{\bm{w}}(\Lambda)
=
\left\{
\bm{v}_{\Lambda,K}
\,\middle|\,
K\in\mathcal{A}_{\bm{w}}(\Lambda),\
\mathfrak{Q}_{\Lambda,\bm{w}}^{<K}:
\partial_{\bm{y}}^{K}
\neq S
\right\}.
\end{equation}
\end{theorem}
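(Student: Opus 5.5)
The plan is to realize the earlier-vanishing coefficient space as the inverse system of the cyclic submodule generated by $\overline{e}_{\Lambda,K}^{*}$ in the quotient $N^{<K}:=F_{\Lambda}/U_{\Lambda,\bm{w}}^{<K}$. First, exactly as for \eqref{eq:directional-coefficient-space-Hom}, evaluation on generators identifies $\Ker(\pi_{\Lambda,<K})\subset\Coeff_{\bm{w}}^{\mathrm{dir}}(\Lambda)$ with $\Hom_{S}(N^{<K},\C[\bm{y}])$: a family in $\Coeff_{\bm{w}}^{\mathrm{dir}}(\Lambda)$ whose earlier acceptable components vanish is the same as an $S$-linear map $F_{\Lambda}\to\C[\bm{y}]$ killing $U_{\Lambda,\bm{w}}^{<K}$. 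Under this identification $\mathcal{C}_{\Lambda,\bm{w},K}^{<}$ is the set of values $f(\overline{e}_{\Lambda,K}^{*})$ as $f$ ranges over $\Hom_{S}(N^{<K},\C[\bm{y}])$.

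Second, Lemma~\ref{lem:monomial-elimination-earlier-vanishing-system} gives $N^{<K}\cong\mu_{\Lambda}(F_{\Lambda})/\mu_{\Lambda}(U^{<K}_{\Lambda,\bm{w}})=\mathfrak{M}_{\Lambda}/\mathfrak{Q}_{\Lambda,\bm{w}}^{<K}$: indeed $\mu_\Lambda$ maps onto $\mathfrak{M}_\Lambda$, and the preimage statement \eqref{eq:preimage-realizability-elimination-ideal} shows the induced map $F_\Lambda/U^{<K}_{\Lambda,\bm{w}}\to\mathfrak{M}_\Lambda/\mathfrak{Q}^{<K}_{\Lambda,\bm{w}}$ is injective (note $\mathfrak{Q}^{<K}_{\Lambda,\bm{w}}\subset\mathfrak{M}_\Lambda$). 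Under this isomorphism $\overline{e}_{\Lambda,K}^{*}\mapsto\partial_{\bm{y}}^{K}$, so the cyclic submodule $S\overline{e}_{\Lambda,K}^{*}$ is $S/(\mathfrak{Q}_{\Lambda,\bm{w}}^{<K}:\partial_{\bm{y}}^{K})$.

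Third, and this is the main obstacle: I must show that the values $f(\overline{e}^*_{\Lambda,K})$ over all $f\in\Hom_S(N^{<K},\C[\bm{y}])$ coincide with the values $g(1)$ over $g\in\Hom_S(S\overline{e}^*_{\Lambda,K},\C[\bm{y}])$, i.e.\ with $\Sol_{\C[\bm{y}]}(\mathfrak{Q}^{<K}_{\Lambda,\bm{w}}:\partial^K_{\bm{y}})$. One inclusion is restriction. For the other, every homomorphism from the submodule must extend to $N^{<K}$, which requires injectivity of $\C[\bm{y}]$ as an $S$-module. I would invoke that $\C[\bm{y}]$ with differentiation is the injective hull-type module: it is isomorphic to the graded Matlis dual $\Hom_{\C}^{\mathrm{gr}}(S,\C)$ (characteristic zero, as in the Macaulay duality cited in Proposition~\ref{prop:finite-length-inverse-system-duality}), hence injective over $S$ since $\Hom_\C(-,\C)$ is exact, and more concretely $\Hom_{S}(M,\C[\bm{y}])$ is the continuous dual, so extensions exist for finitely generated modules by Artin--Rees/Krull topology arguments. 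I would present this via the identification $\Hom_S(M,\C[\bm y])\cong\Hom_{\C}^{\mathrm{cont}}(M,\C)$ and the fact that for finitely generated $M$ and submodule $M'$ the $\mathfrak{m}$-adic topology on $M'$ is induced (Artin--Rees), so continuous functionals extend. This gives \eqref{eq:earlier-vanishing-coefficient-space-colon-inverse-system}; and \eqref{eq:annihilator-earlier-vanishing-Kth-component} follows since for an ideal $\mathfrak{a}$, the annihilator of its inverse system is $\mathfrak{a}$ itself, again by Macaulay duality (Krull intersection theorem for the nonartinian case).

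Finally the equivalences: (1)$\Leftrightarrow$(2) is Theorem~\ref{thm:finite-criterion-all-classwise-realizable-exponents}, since $\mathcal{C}^{<}_{\Lambda,\bm{w},K}\neq0$ is exactly the kernel noninclusion \eqref{eq:realizable-coordinate-kernel-noninclusion}. (2)$\Leftrightarrow$(3) follows from \eqref{eq:annihilator-earlier-vanishing-Kth-component}: the inverse system of a proper ideal contains the constant $1$ (proper ideals of $S$ need not lie in $\mathfrak{m}$, so I instead use that a nonzero module $S/\mathfrak{a}$ has a nonzero continuous functional, i.e.\ nonzero inverse system), while $\Sol(S)=0$. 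The description \eqref{eq:classwise-realizable-exponents-colon-criterion} then follows by ranging over $K\in\mathcal{A}_{\bm{w}}(\Lambda)$.
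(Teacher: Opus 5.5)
Your first three steps are sound and follow the paper's proof. The identification $\Ker(\pi_{\Lambda,<K})\cong\Hom_{S}(N^{<K},\C[\bm{y}])$ is the same as the paper's. Your isomorphism $N^{<K}\cong\mathfrak{M}_{\Lambda}/\mathfrak{Q}_{\Lambda,\bm{w}}^{<K}$ repackages the paper's computation $(U_{\Lambda,\bm{w}}^{<K}:e_{\Lambda,K}^{*})=\mathfrak{Q}_{\Lambda,\bm{w}}^{<K}:\partial_{\bm{y}}^{K}$. Your Artin--Rees extension argument is valid for any finitely generated module; the paper needs less, because it first notes that $N^{<K}$ is a quotient of $M_{\Lambda,\bm{w}}^{\mathrm{dir}}$ and so has finite length with support at $\mathfrak{m}$.

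The gap is in the last part. You derive \eqref{eq:annihilator-earlier-vanishing-Kth-component} and (3)$\Rightarrow$(2) from two general claims that are false over the non-local ring $S$. Take $\mathfrak{a}=(\partial_{y_{1}}-1)$. Then $S/\mathfrak{a}\neq0$, but $\partial_{y_{1}}p=p$ has no nonzero polynomial solution. Hence $\Sol_{\C[\bm{y}]}(\mathfrak{a})=0$ and $\Ann_{S}(\Sol_{\C[\bm{y}]}(\mathfrak{a}))=S\neq\mathfrak{a}$. In general one has $\Ann_{S}(\Sol_{\C[\bm{y}]}(\mathfrak{a}))=\bigcap_{t}(\mathfrak{a}+\mathfrak{m}^{t})=\mathfrak{a}S_{\mathfrak{m}}\cap S$. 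This can be strictly larger than $\mathfrak{a}$ even when $\mathfrak{a}\subset\mathfrak{m}$; for example $\mathfrak{a}=(\partial_{y_{1}}(\partial_{y_{1}}-1))$ gives annihilator $(\partial_{y_{1}})$. The Krull intersection theorem yields $\bigcap_{t}\mathfrak{m}^{t}(S/\mathfrak{a})=0$ only after localizing at $\mathfrak{m}$ or in a graded setting.

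What is missing is an argument that the specific ideal $\mathfrak{a}:=\mathfrak{Q}_{\Lambda,\bm{w}}^{<K}:\partial_{\bm{y}}^{K}$ avoids these pathologies. Either of two inputs suffices.

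\textbf{(a) Homogeneity.} $\mathfrak{Q}_{\Lambda,\bm{w}}^{<K}$ is generated by homogeneous elements: products of Euler linear forms with squarefree monomials, and squarefree monomials. Since $\partial_{\bm{y}}^{K}$ is a monomial, $\mathfrak{a}$ is homogeneous. A proper homogeneous ideal lies in $\mathfrak{m}$, so $1\in\Sol_{\C[\bm{y}]}(\mathfrak{a})$; your discarded first instinct was right. Moreover, $(\mathfrak{a}+\mathfrak{m}^{t})_{e}=\mathfrak{a}_{e}$ for $t>e$, which gives $\bigcap_{t}(\mathfrak{a}+\mathfrak{m}^{t})=\mathfrak{a}$.

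\textbf{(b) Finite length, as in the paper.} $K\in\mathcal{A}_{\bm{w}}(\Lambda)$ contains an element of $\mathcal{M}_{\bm{w}}(\Lambda)$, so $\Lambda$ is admissible. Then \cref{thm:finite-length-classwise-all-lattice-modules} makes $N^{<K}$ of finite length with support in $\{\mathfrak{m}\}$. Since $S/\mathfrak{a}\cong S\overline{e}_{\Lambda,K}^{*}\subset N^{<K}$, the ideal $\mathfrak{a}$ is either $S$ or $\mathfrak{m}$-primary. Finite-length Macaulay duality then gives both the double-annihilator identity and $\dim_{\C}\mathcal{C}_{\Lambda,\bm{w},K}^{<}=\length_{S}(S/\mathfrak{a})$, whence (2)$\Leftrightarrow$(3).

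With either fix inserted, your proof goes through.
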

\begin{proof}
Put $M_{\Lambda,\bm{w}}^{<K}:=F_{\Lambda}/U_{\Lambda,\bm{w}}^{<K}$, and denote the class of $e_{\Lambda,K}^{*}$ in this quotient by $\overline{e}_{\Lambda,K}^{<K}$.
Since $U_{\Lambda,\bm{w}}^{\mathrm{dir}}\subset U_{\Lambda,\bm{w}}^{<K}$, the module $M_{\Lambda,\bm{w}}^{<K}$ is a quotient of $M_{\Lambda,\bm{w}}^{\mathrm{dir}}$.
The acceptable support $K$ contains an element of $\mathcal{M}_{\bm{w}}(\Lambda)$, so $\Lambda$ is $\bm{w}$-admissible.
Hence \cref{thm:finite-length-classwise-all-lattice-modules} shows that $M_{\Lambda,\bm{w}}^{<K}$ has finite length and is supported at $\mathfrak{m}$.

By the definition of $U_{\Lambda,\bm{w}}^{<K}$, evaluation on the residue classes of the generators gives a canonical identification
\[
\Ker(\pi_{\Lambda,<K})\cong\Hom_{S}\left(M_{\Lambda,\bm{w}}^{<K},\C[\bm{y}]\right).
\]
Under this identification, the $K$-th component of the normalized coefficient family corresponding to a homomorphism $\psi$ is $c_{\Lambda,K}=\psi\left(\overline{e}_{\Lambda,K}^{<K}\right)$.
Consequently,
\[
\mathcal{C}_{\Lambda,\bm{w},K}^{<}=\left\{\psi\left(\overline{e}_{\Lambda,K}^{<K}\right)\,\middle|\,\psi\in\Hom_{S}\left(M_{\Lambda,\bm{w}}^{<K},\C[\bm{y}]\right)\right\}.
\]

Consider the short exact sequence
\[
0\longrightarrow S\overline{e}_{\Lambda,K}^{<K}\xrightarrow{\iota} M_{\Lambda,\bm{w}}^{<K} \longrightarrow M_{\Lambda,\bm{w}}^{<K}/S\overline{e}_{\Lambda,K}^{<K} \longrightarrow 0.
\]
All three modules in this sequence have finite length and are supported at $\mathfrak{m}$.
Since $\C[\bm{y}]$, endowed with the constant-coefficient differential action of $S$, is injective in the category of $\mathfrak{m}$-torsion $S$-modules, applying $\Hom_{S}(-,\C[\bm{y}])$ gives an exact sequence
\begin{align*}
0 & \longrightarrow \Hom_{S}\left(M_{\Lambda,\bm{w}}^{<K}/S\overline{e}_{\Lambda,K}^{<K},\C[\bm{y}]\right)\longrightarrow \Hom_{S}\left(M_{\Lambda,\bm{w}}^{<K},\C[\bm{y}]\right)\\
 &\xlongrightarrow{\iota^{*}} \Hom_{S}\left(S\overline{e}_{\Lambda,K}^{<K},\C[\bm{y}]\right)\longrightarrow 0,
\end{align*}
where $\iota^{*}(\psi)=\left.\psi\right|_{S\overline{e}_{\Lambda,K}^{<K}}$.
In particular, the restriction map $\iota^{*}$ is surjective.

Every homomorphism $\eta:S\overline{e}_{\Lambda,K}^{<K}\to\C[\bm{y}]$ is uniquely determined by the value $\eta(\overline{e}_{\Lambda,K}^{<K})$.
The preceding description of $\mathcal{C}_{\Lambda,\bm{w},K}^{<}$ and the surjectivity of $\iota^{*}$ therefore show that evaluation at the cyclic generator induces a canonical linear isomorphism
\begin{equation}
\label{eq:earlier-vanishing-component-as-cyclic-dual}
\Hom_{S}\left(S\overline{e}_{\Lambda,K}^{<K},\C[\bm{y}]\right)\xrightarrow{\sim}\mathcal{C}_{\Lambda,\bm{w},K}^{<},\quad\eta\longmapsto\eta\left(\overline{e}_{\Lambda,K}^{<K}\right).
\end{equation}

The annihilator of $\overline{e}_{\Lambda,K}^{<K}$ is the module colon ideal
\[
\left(U_{\Lambda,\bm{w}}^{<K}:e_{\Lambda,K}^{*}\right):=\left\{f\in S\,\middle|\,f e_{\Lambda,K}^{*}\in U_{\Lambda,\bm{w}}^{<K}\right\}.
\]
By \eqref{eq:preimage-realizability-elimination-ideal}, for every $f\in S$, one has
\begin{align*}
f e_{\Lambda,K}^{*}\in U_{\Lambda,\bm{w}}^{<K}&\iff \mu_{\Lambda}\left(f e_{\Lambda,K}^{*}\right)\in\mathfrak{Q}_{\Lambda,\bm{w}}^{<K}\\
&\iff f\partial_{\bm{y}}^{K}\in\mathfrak{Q}_{\Lambda,\bm{w}}^{<K}\iff f\in\mathfrak{Q}_{\Lambda,\bm{w}}^{<K}:\partial_{\bm{y}}^{K}.
\end{align*}
Hence
\begin{equation}
\label{eq:module-colon-equals-polynomial-colon}
\left(U_{\Lambda,\bm{w}}^{<K}:e_{\Lambda,K}^{*}\right)=\mathfrak{Q}_{\Lambda,\bm{w}}^{<K}:\partial_{\bm{y}}^{K}.
\end{equation}
The standard presentation of a cyclic module by the annihilator of its generator therefore gives
\[
S\overline{e}_{\Lambda,K}^{<K}\cong S/\left(\mathfrak{Q}_{\Lambda,\bm{w}}^{<K}:\partial_{\bm{y}}^{K}\right),
\]
where the isomorphism is induced by $f+\left(\mathfrak{Q}_{\Lambda,\bm{w}}^{<K}:\partial_{\bm{y}}^{K}\right)\mapsto f\overline{e}_{\Lambda,K}^{<K}$.

Evaluation at the residue class of $1$ induces a canonical linear isomorphism
\[
\Hom_{S}\left(S/(\mathfrak{Q}_{\Lambda,\bm{w}}^{<K}:\partial_{\bm{y}}^{K}),\C[\bm{y}]\right)\xrightarrow{\sim}\Sol_{\C[\bm{y}]}\left(\mathfrak{Q}_{\Lambda,\bm{w}}^{<K}:\partial_{\bm{y}}^{K}\right),\quad \eta\longmapsto\eta(\overline{1}).
\]
Indeed, $f\mathbin{\bullet}_{\bm{y}}\eta(\overline{1})=\eta(\overline{f})=0$ for any $f\in\mathfrak{Q}_{\Lambda,\bm{w}}^{<K}:\partial_{\bm{y}}^{K}$.
Conversely, every $p\in\Sol_{\C[\bm{y}]}\left(\mathfrak{Q}_{\Lambda,\bm{w}}^{<K}:\partial_{\bm{y}}^{K}\right)$ defines an $S$-linear homomorphism by $\overline{f}\mapsto f\mathbin{\bullet}_{\bm{y}}p$.
Combining this identification with \eqref{eq:earlier-vanishing-component-as-cyclic-dual} gives \eqref{eq:earlier-vanishing-coefficient-space-colon-inverse-system}.

If $\mathfrak{Q}_{\Lambda,\bm{w}}^{<K}:\partial_{\bm{y}}^{K}=S$, then both sides of \eqref{eq:annihilator-earlier-vanishing-Kth-component} are equal to $S$.
Suppose that $\mathfrak{Q}_{\Lambda,\bm{w}}^{<K}:\partial_{\bm{y}}^{K}\neq S$.
Since $S/\left(\mathfrak{Q}_{\Lambda,\bm{w}}^{<K}:\partial_{\bm{y}}^{K}\right)\cong S\overline{e}_{\Lambda,K}^{<K}$ is a nonzero finite-length module supported at $\mathfrak{m}$, the ideal $\mathfrak{Q}_{\Lambda,\bm{w}}^{<K}:\partial_{\bm{y}}^{K}$ is $\mathfrak{m}$-primary.
The finite-length double-annihilator property gives
\[
\Ann_{S}\left(\Sol_{\C[\bm{y}]}\left(\mathfrak{Q}_{\Lambda,\bm{w}}^{<K}:\partial_{\bm{y}}^{K}\right)\right)=\mathfrak{Q}_{\Lambda,\bm{w}}^{<K}:\partial_{\bm{y}}^{K}.
\]
Together with \eqref{eq:earlier-vanishing-coefficient-space-colon-inverse-system}, this yields \eqref{eq:annihilator-earlier-vanishing-Kth-component}.

The equivalence of \textup{(1)} and \textup{(2)} follows from \cref{thm:finite-criterion-all-classwise-realizable-exponents}.
By \eqref{eq:earlier-vanishing-coefficient-space-colon-inverse-system}, one has
\[
\mathcal{C}_{\Lambda,\bm{w},K}^{<}\cong\Hom_{S}\left(S/\left(\mathfrak{Q}_{\Lambda,\bm{w}}^{<K}:\partial_{\bm{y}}^{K}\right),\C[\bm{y}]\right).
\]
Finite-length inverse-system duality therefore gives
\[
\dim_{\C}\mathcal{C}_{\Lambda,\bm{w},K}^{<}=\length_{S}\left(S/\left(\mathfrak{Q}_{\Lambda,\bm{w}}^{<K}:\partial_{\bm{y}}^{K}\right)\right).
\]
Consequently, $\mathcal{C}_{\Lambda,\bm{w},K}^{<}\neq0$ if and only if $S/\left(\mathfrak{Q}_{\Lambda,\bm{w}}^{<K}:\partial_{\bm{y}}^{K}\right)\neq 0$, which is equivalent to $\mathfrak{Q}_{\Lambda,\bm{w}}^{<K}:\partial_{\bm{y}}^{K}\neq S$.
This proves the equivalence of \textup{(2)} and \textup{(3)}.
Applying the resulting criterion to every $K\in\mathcal{A}_{\bm{w}}(\Lambda)$ proves \eqref{eq:classwise-realizable-exponents-colon-criterion}.
\end{proof}

\begin{remark}[Computability, comparison with the results of \cite{Log3}, and the nonhomogeneous case]
\label{rem:computability-colon-criterion-nonhomogeneous-case}
The ideal $\mathfrak{Q}_{\Lambda,\bm{w}}^{<K}$ is generated explicitly by the Euler linear forms and finitely many squarefree monomials determined by $\NS(\Lambda)$, $\mathcal{A}_{\bm{w}}(\Lambda)$, and the ordering of the stratum minima.
Hence the colon ideal in \eqref{eq:proper-colon-ideal-realizability-criterion} is computable by standard polynomial ideal operations.

The role of this colon ideal differs from that of the colon ideal constructed in \cite[Theorem~4.7]{Log3}.
There, one fixes a fake exponent and an ordered negative-support family, eliminates the components of the corresponding finite normalized coefficient module, and obtains a single colon ideal whose polynomial inverse system is the entire space of logarithmic polynomials that occur as coefficients at that fixed fake exponent.
In the present setting, by contrast, the basic object is an exponent lattice $\Lambda$, and for each acceptable negative support $K$ we impose the additional vanishing of all acceptable components whose stratum minima strictly precede $\bm{v}_{\Lambda,K}$.
The resulting $K$-dependent colon ideal $\mathfrak{Q}_{\Lambda,\bm{w}}^{<K}:\partial_{\bm{y}}^{K}$ therefore describes the possible $K$-components under these earlier-vanishing conditions and, in particular, determines whether $\bm{v}_{\Lambda,K}$ is realizable as the lowest exponent of a classwise directed formal solution.
Thus the present construction is not merely a reformulation of the fixed-coefficient colon formula in \cite[Theorem~4.7]{Log3}; it is a filtration-sensitive realizability criterion obtained by applying monomial elimination separately at each ordered stratum minimum.

A further distinction concerns the hypotheses on $A$.
The comparison in \cite[Theorem~4.9]{Log3} between the coefficient-space colon ideal and a translated local component of the indicial ideal is established under the homogeneity assumption on $A$ and uses the corresponding Gr\"{o}bner deformation and indicial theory.
The proof of \cref{thm:polynomial-colon-ideal-criterion-classwise-realizability}, however, uses only the finite directional coefficient module, the monomial form of the normalized transit relations, finite-length inverse-system duality, and monomial syzygy elimination.
It uses neither a total-degree grading on the columns of $A$, regular holonomicity, nor an identification with a local component of an indicial ideal.
Consequently, the colon-ideal realizability criterion applies without change when $A$ is nonhomogeneous.
No assertion is made here that the resulting colon ideal agrees, in the nonhomogeneous case, with a translated component of an indicial ideal.
\end{remark}

\section{An algorithmic all-lattice calculation with a rank-jumping parameter}
\label{sec:complete-all-lattice-example}
We conclude by carrying out the finite procedures developed in the preceding sections for an example whose directional solution dimension agrees with an independently computed holonomic rank at a rank-jumping parameter.
The calculation proceeds through the following finite workflow.
\begin{enumerate}[\rm (1)]
\item Compute a Gale dual matrix $B$ and an affine parametrization
\[
\Lambda(\bm{z})=\bm{\lambda}_{0}+B\bm{z}+L
\]
of the exponent lattices in the Euler fiber.
\item Enumerate all exponent-lattice candidates that can have full-rank integral-coordinate sets by solving the finite congruence systems attached to the full-column-rank row submatrices of $B$.
\item For each candidate, determine $\NS(\Lambda)$ by integer feasibility of the sign polyhedra and determine $\mathcal{B}_{\bm{w}}(\Lambda)$ by linear optimization on their recession cones.
\item Retain precisely the exponent lattices for which
\[
\mathcal{M}_{\bm{w}}(\Lambda)=\MinNS(\Lambda)\cap\mathcal{B}_{\bm{w}}(\Lambda)\neq\emptyset.
\]
These are the $\bm{w}$-admissible exponent lattices.
\item For every admissible exponent lattice, compute the acceptable negative supports and the corresponding stratum minima in increasing $\bm{w}$-order.
\item Construct the finite directional coefficient module from the normalized Euler, transit, and directional vanishing relations, and compute its composition length.
\item For each acceptable negative support $K$, compute the polynomial colon ideal
\[
\mathfrak{Q}_{\Lambda,\bm{w}}^{<K}:\partial_{\bm{y}}^{K}.
\]
By \cref{thm:polynomial-colon-ideal-criterion-classwise-realizability}, the corresponding stratum minimum is realizable if and only if this colon ideal is proper.
\item Add the classwise module lengths and apply \cref{thm:global-length-dimension-formula} to obtain the dimension of the all-lattice directed formal logarithmic solution space.
\end{enumerate}
The following eight steps implement this workflow in the same order.

\subsection{Step 1: Gale data and parametrization of the Euler fiber}
Let
\[
A=
\begin{pmatrix}
1&1&1&2&2&2\\
0&1&0&0&1&0\\
0&0&2&1&1&3
\end{pmatrix},
\quad
\bm{\beta}=
\begin{pmatrix}
1\\
0\\
1
\end{pmatrix},
\]
and let
\[
\bm{w}=(1,1,1,1,1,1)+\tfrac{1}{10}(1,\sqrt{2},\sqrt{3},\sqrt{5},\sqrt{6},\sqrt{7}).
\]
Solving $A\bm{u}=0$ over $\Z$ gives
\begin{equation}
\label{eq:example-relation-lattice-Gale-matrix}
L=
\Z
\begin{pmatrix}
3\\
0\\
1\\
-2\\
0\\
0
\end{pmatrix}
\oplus
\Z
\begin{pmatrix}
1\\
-1\\
0\\
-1\\
1\\
0
\end{pmatrix}
\oplus
\Z
\begin{pmatrix}
4\\
0\\
0\\
-3\\
0\\
1
\end{pmatrix},
\quad
B=
\begin{pmatrix}
3&1&4\\
0&-1&0\\
1&0&0\\
-2&-1&-3\\
0&1&0\\
0&0&1
\end{pmatrix}.
\end{equation}
Thus $L=B\Z^{3}$ and $AB=0$.
The induced linear functional on the lattice-coordinate space is
\begin{equation}
\label{eq:example-Gale-direction-vector}
\bm{\omega}:=B^{T}\bm{w}=
\begin{pmatrix}
\frac{\,23+\sqrt{3}-2\sqrt{5}\,}{10}\\[2mm]
\frac{\,1-\sqrt{2}-\sqrt{5}+\sqrt{6}\,}{10}\\[2mm]
\frac{\,24-3\sqrt{5}+\sqrt{7}\,}{10}
\end{pmatrix}
\approx
\begin{pmatrix}
2.02599149\\
-0.02007918\\
1.99375474
\end{pmatrix}.
\end{equation}
If $(s,t,r)\in\Z^{3}$ satisfies $s\omega_{1}+t\omega_{2}+r\omega_{3}=0$, comparison of the coefficients of $\sqrt{3}$, $\sqrt{2}$, and $\sqrt{7}$ gives successively $s=0$, $t=0$, and $r=0$.
Hence $\bm{w}$ is $L$-generic.
Put
\[
\bm{\lambda}_{0}:=(-1,0,0,1,0,0)^{T}.
\]
Then $A\bm{\lambda}_{0}=\bm{\beta}$, and every exponent lattice in the Euler fiber is uniquely of the form
\begin{equation}
\label{eq:example-exponent-lattice-parameterization}
\Lambda(\bm{z}):=\bm{\lambda}_{0}+B\bm{z}+L,
\qquad
\bm{z}\in\C^{3}/\Z^{3}.
\end{equation}

\subsection{Step 2: Finite generation of the exponent-lattice candidates}
By \cref{lem:full-coordinate-rank-forced-by-boundedness}, every admissible exponent lattice has an integral-coordinate set $E_{\Lambda}$ for which $B_{E_{\Lambda}}$ has column rank three.
It therefore suffices to solve the coordinate-integrality congruences for the full-rank three-row submatrices of $B$.
The nonzero maximal minors have absolute values $1$, $2$, $3$, or $4$.
Solving the associated congruences modulo $\Z^{3}$ and removing repetitions gives the complete candidate set
\begin{equation}
\label{eq:example-nine-candidate-lattices}
\begin{aligned}
&(0,0,0)^{T},
\left(0,0,\tfrac{1}{4}\right)^{T},
\left(0,0,\tfrac{1}{3}\right)^{T},
\left(0,0,\tfrac{1}{2}\right)^{T},
\left(0,0,\tfrac{2}{3}\right)^{T},\\
&\left(0,0,\tfrac{3}{4}\right)^{T},
\left(\tfrac{1}{3},0,0\right)^{T},
\left(\tfrac{1}{2},0,0\right)^{T},
\left(\tfrac{2}{3},0,0\right)^{T}
\quad
\bmod\Z^{3}.
\end{aligned}
\end{equation}
Every admissible exponent lattice occurs in this nine-element list.

\subsection{Step 3: Integer feasibility and recession-cone screening}
For each candidate $\Lambda(\bm{z})$, integer feasibility of the sign polyhedra determines $\NS(\Lambda(\bm{z}))$.
For each $I\in\NS(\Lambda(\bm{z}))$, nonnegativity of $\bm{\omega}$ on the recession cone $R_{\Lambda(\bm{z}),I}$ determines whether $I$ belongs to $\mathcal{B}_{\bm{w}}(\Lambda(\bm{z}))$ by \cref{prop:directional-boundedness-stratum-minima}.
The resulting data relevant to the subsequent admissibility test are recorded in \cref{tab:example-lattice-screening}.
\begin{table}[ht]
\centering
\begin{tabular}{c|c|c|c}
$\bm{z}$&$E_{\Lambda(\bm{z})}$&$|\NS(\Lambda(\bm{z}))|$&$|\mathcal{B}_{\bm{w}}(\Lambda(\bm{z}))|$\\[1mm]
\hline
$(0,0,0)^{T}$&$\{1,2,3,4,5,6\}$&$33$&$12$\\[1mm]
$(0,0,\tfrac{1}{4})^{T}$&$\{1,2,3,5\}$&$12$&$2$\\[1mm]
$(0,0,\tfrac{1}{3})^{T}$&$\{2,3,4,5\}$&$12$&$2$\\[1mm]
$(0,0,\tfrac{1}{2})^{T}$&$\{1,2,3,5\}$&$12$&$2$\\[1mm]
$(0,0,\tfrac{2}{3})^{T}$&$\{2,3,4,5\}$&$12$&$2$\\[1mm]
$(0,0,\tfrac{3}{4})^{T}$&$\{1,2,3,5\}$&$12$&$2$\\[1mm]
$(\tfrac{1}{3},0,0)^{T}$&$\{1,2,5,6\}$&$12$&$2$\\[1mm]
$(\tfrac{1}{2},0,0)^{T}$&$\{2,4,5,6\}$&$12$&$2$\\[1mm]
$(\tfrac{2}{3},0,0)^{T}$&$\{1,2,5,6\}$&$12$&$2$\\[1mm]
\end{tabular}
\caption{Integer-feasibility and recession-cone screening of the finite candidates.}
\label{tab:example-lattice-screening}
\end{table}

\subsection{Step 4: Selection of the admissible exponent lattices}
Intersecting the inclusion-minimal and bounded negative-support families gives the following admissibility output.
\begin{equation}
\label{eq:example-candidate-admissibility-output}
\begin{array}{c|c}
\bm{z}&\mathcal{M}_{\bm{w}}(\Lambda(\bm{z}))\\[1mm]
\hline
(0,0,0)^{T}&\{\{1\},\{2\},\{3\},\{4\},\{5\},\{6\}\}\\[1mm]
(0,0,\tfrac{1}{4})^{T}&\{\emptyset\}\\[1mm]
(0,0,\tfrac{1}{3})^{T}&\emptyset\\[1mm]
(0,0,\tfrac{1}{2})^{T}&\{\emptyset\}\\[1mm]
(0,0,\tfrac{2}{3})^{T}&\emptyset\\[1mm]
(0,0,\tfrac{3}{4})^{T}&\{\emptyset\}\\[1mm]
(\tfrac{1}{3},0,0)^{T}&\emptyset\\[1mm]
(\tfrac{1}{2},0,0)^{T}&\emptyset\\[1mm]
(\tfrac{2}{3},0,0)^{T}&\emptyset\\[1mm]
\end{array}
\end{equation}
Consequently,
\begin{equation}
\label{eq:example-four-admissible-lattices}
\mathfrak{L}_{\bm{\beta},\bm{w}}^{\mathrm{adm}}
=
\left\{
\Lambda_{0},
\Lambda_{\tfrac{1}{4}},
\Lambda_{\tfrac{1}{2}},
\Lambda_{\tfrac{3}{4}}
\right\},
\end{equation}
where $\Lambda_{q}:=\Lambda((0,0,q)^{T})$ for $q\in\left\{0,\tfrac{1}{4},\tfrac{1}{2},\tfrac{3}{4}\right\}$.

\subsection{Step 5: Acceptable supports and ordered stratum minima}
For the integral block, one obtains
\begin{equation}
\label{eq:example-integral-bounded-acceptable-families}
\begin{aligned}
\mathcal{B}_{\bm{w}}(\Lambda_{0})
&=
\{\{1\},\{2\},\{3\},\{4\},\{5\},\{6\},\\
&\qquad\qquad\{1,5\},\{2,6\},\{4,5\},\{4,6\},\{5,6\},\{4,5,6\}\},\\
\mathcal{A}_{\bm{w}}(\Lambda_{0})
&=
\mathcal{B}_{\bm{w}}(\Lambda_{0}),\\
\mathcal{M}_{\bm{w}}(\Lambda_{0})
&=
\{\{1\},\{2\},\{3\},\{4\},\{5\},\{6\}\}.
\end{aligned}
\end{equation}
The acceptable stratum minima, in increasing $\bm{w}$-order, are
\begin{equation}
\label{eq:example-integral-acceptable-minima-order}
\begin{aligned}
\bm{v}_{\Lambda_{0},\{3\}}&=(0,0,-1,0,0,1)^{T},\\
\bm{v}_{\Lambda_{0},\{2\}}&=(0,-1,0,0,1,0)^{T},\\
\bm{v}_{\Lambda_{0},\{2,6\}}&=(0,-2,1,0,2,-1)^{T},\\
\bm{v}_{\Lambda_{0},\{1\}}&=(-1,0,0,1,0,0)^{T},\\
\bm{v}_{\Lambda_{0},\{1,5\}}&=(-2,1,0,2,-1,0)^{T},\\
\bm{v}_{\Lambda_{0},\{4\}}&=(3,0,0,-2,0,1)^{T},\\
\bm{v}_{\Lambda_{0},\{4,5\}}&=(2,1,0,-1,-1,1)^{T},\\
\bm{v}_{\Lambda_{0},\{5\}}&=(1,2,0,0,-2,1)^{T},\\
\bm{v}_{\Lambda_{0},\{6\}}&=(1,0,2,0,0,-1)^{T},\\
\bm{v}_{\Lambda_{0},\{5,6\}}&=(0,1,2,1,-1,-1)^{T},\\
\bm{v}_{\Lambda_{0},\{4,6\}}&=(4,0,3,-2,0,-1)^{T},\\
\bm{v}_{\Lambda_{0},\{4,5,6\}}&=(3,1,3,-1,-1,-1)^{T}.
\end{aligned}
\end{equation}
For $q\in\{\tfrac{1}{4},\tfrac{1}{2},\tfrac{3}{4}\}$, choose
\begin{equation}
\label{eq:example-fractional-reference-vectors}
\bm{\lambda}_{q}
:=
\bm{\lambda}_{0}
+B
(0,0,q)^{T}
=
(-1+4q,0,0,1-3q,0,q)^{T}.
\end{equation}
For each fractional block,
\begin{equation}
\label{eq:example-fractional-support-families}
\mathcal{M}_{\bm{w}}(\Lambda_{q})=\{\emptyset\},
\qquad
\mathcal{A}_{\bm{w}}(\Lambda_{q})=\{\emptyset,\{5\}\},
\end{equation}
and $\bm{\lambda}_{q}$ is the minimum of the empty-support stratum and precedes the minimum of the $\{5\}$-stratum.

\subsection{Step 6: Directional coefficient modules and their lengths}
The Euler linear ideal is
\begin{equation}
\label{eq:example-Euler-linear-ideal}
\mathfrak{e}_{A}
=
\langle
\partial_{y_{1}}+\partial_{y_{2}}+\partial_{y_{3}}+2\partial_{y_{4}}+2\partial_{y_{5}}+2\partial_{y_{6}},
\partial_{y_{2}}+\partial_{y_{5}},
2\partial_{y_{3}}+\partial_{y_{4}}+\partial_{y_{5}}+3\partial_{y_{6}}
\rangle.
\end{equation}
A finite row reduction of the homogeneous presentation of $M_{\Lambda_{0},\bm{w}}^{\mathrm{dir}}$, with $e_{\Lambda_{0},I}^{*}$ placed in shifted degree $|I|$, gives
\begin{equation}
\label{eq:example-integral-shifted-Hilbert-function}
\dim_{\C}\left(M_{\Lambda_{0},\bm{w}}^{\mathrm{dir}}\right)_{q}
=
\begin{cases}
6,&q=1,\\
1,&q=2,\\
0,&q\neq1,2.
\end{cases}
\end{equation}
Hence
\begin{equation}
\label{eq:example-integral-directional-module-length}
\length_{S}\left(M_{\Lambda_{0},\bm{w}}^{\mathrm{dir}}\right)=7.
\end{equation}
In particular, the integral block is not a direct sum of six copies of $S/\mathfrak{m}$.
The degree-two component in \eqref{eq:example-integral-shifted-Hilbert-function} records one nontrivial nilpotent direction and, dually, one additional nonconstant polynomial normalized coefficient direction.
For every $q\in\{\tfrac{1}{4},\tfrac{1}{2},\tfrac{3}{4}\}$, the boundary and Euler relations annihilate the empty-support generator by $\mathfrak{m}$, while the corresponding constant family is nonzero.
Consequently,
\begin{equation}
\label{eq:example-fractional-directional-modules}
M_{\Lambda_{q},\bm{w}}^{\mathrm{dir}}
\cong
S/\mathfrak{m},
\qquad
\length_{S}\left(M_{\Lambda_{q},\bm{w}}^{\mathrm{dir}}\right)=1.
\end{equation}

\subsection{Step 7: Colon ideals and realizable lowest exponents}
The six singleton minima of the integral block are realizable by \cref{thm:minimal-support-solutions-nonvanishing}.
Every nonminimal acceptable support $K$ in \eqref{eq:example-integral-bounded-acceptable-families} contains a singleton $\{j\}$ whose stratum minimum precedes $\bm{v}_{\Lambda_{0},K}$ in \eqref{eq:example-integral-acceptable-minima-order}.
Therefore $\partial_{y_{j}}\in\mathfrak{Q}_{\Lambda_{0},\bm{w}}^{<K}$, and hence
\[
\partial_{\bm{y}}^{K}\in\mathfrak{Q}_{\Lambda_{0},\bm{w}}^{<K}.
\]
It follows that
\[
\mathfrak{Q}_{\Lambda_{0},\bm{w}}^{<K}:\partial_{\bm{y}}^{K}=S
\]
for every nonminimal acceptable $K$.
Thus
\begin{equation}
\label{eq:example-integral-realizable-exponents}
\Rexp_{\bm{w}}(\Lambda_{0})
=
\left\{
\bm{v}_{\Lambda_{0},\{1\}},
\bm{v}_{\Lambda_{0},\{2\}},
\bm{v}_{\Lambda_{0},\{3\}},
\bm{v}_{\Lambda_{0},\{4\}},
\bm{v}_{\Lambda_{0},\{5\}},
\bm{v}_{\Lambda_{0},\{6\}}
\right\}.
\end{equation}
For each fractional block, the earlier-vanishing ideal associated with the nonminimal acceptable support $\{5\}$ contains $\partial_{\bm{y}}^{\emptyset}=1$ because the empty-support minimum precedes it.
Hence the corresponding colon ideal is $S$, whereas the empty-support minimum is realizable.
Therefore
\begin{equation}
\label{eq:example-fractional-realizable-exponents}
\Rexp_{\bm{w}}(\Lambda_{q})=\{\bm{\lambda}_{q}\}
\quad
\left(q\in\left\{\tfrac{1}{4},\tfrac{1}{2},\tfrac{3}{4}\right\}\right).
\end{equation}
Explicitly, the three fractional realizable exponents are
\[
\left(0,0,0,\tfrac{1}{4},0,\tfrac{1}{4}\right)^{T},
\quad
\left(1,0,0,-\tfrac{1}{2},0,\tfrac{1}{2}\right)^{T},
\quad
\left(2,0,0,-\tfrac{5}{4},0,\tfrac{3}{4}\right)^{T}.
\]

\subsection{Step 8: Global length, formal dimension, and the independent rank computation}
By \eqref{eq:example-four-admissible-lattices}, \eqref{eq:example-integral-directional-module-length}, and \eqref{eq:example-fractional-directional-modules}, one has
\begin{equation}
\label{eq:example-all-lattice-module-length-ten}
\length_{S}\left(M_{\bm{\beta},\bm{w}}^{\mathrm{dir,all}}\right)=7+1+1+1=10.
\end{equation}
Therefore \cref{thm:global-length-dimension-formula} gives
\begin{equation}
\label{eq:example-all-lattice-solution-dimension-ten}
\dim_{\C}
\Sol_{\bm{\beta},\bm{w}}^{\mathrm{dir,all}}
\bigl(H_{A}(\bm{\beta})\bigr)
=10.
\end{equation}
The complete family of realizable lowest exponents consists of the six singleton minima in \eqref{eq:example-integral-realizable-exponents} and the three fractional exponents in \eqref{eq:example-fractional-realizable-exponents}.
Thus there are nine realizable lowest exponents, whereas the directed formal logarithmic solution space has dimension ten.
The difference is accounted for by the additional degree-two direction in the integral coefficient module.
For comparison, an independent Macaulay2 computation gives
\begin{equation}
\label{eq:example-independent-holonomic-rank}
\operatorname{rank}\bigl(H_{A}(\bm{\beta})\bigr)=10,
\quad
\operatorname{vol}(A)=8.
\end{equation}
Thus $\bm{\beta}=(1,0,1)^{T}$ is a rank-jumping parameter in this example, and the formally computed dimension in \eqref{eq:example-all-lattice-solution-dimension-ten} agrees numerically with the independently computed holonomic rank.
This agreement is not used in any part of the all-lattice calculation.

\begin{remark}[Scope of the rank comparison]
\label{rem:example-scope-rank-comparison}
The equality between the dimension in \eqref{eq:example-all-lattice-solution-dimension-ten} and the holonomic rank in \eqref{eq:example-independent-holonomic-rank} is recorded only as an independent computational observation for this example.
The results proved in the present paper concern the finite directional coefficient module and its formal logarithmic realization.
They do not identify the all-lattice directed formal logarithmic solution space with a local holomorphic solution space and do not prove a general equality between its dimension and the holonomic rank.
Convergence, analytic selection, and a general rank comparison require additional arguments and are reserved for subsequent work.
\end{remark}

\bibliographystyle{abbrv}
\bibliography{refs}

@article{GKZ89,
  author = {Gel'fand, I. M. and Kapranov, M. M. and Zelevinsky, A. V.},
  title = {Hypergeometric Functions and Toric Varieties},
  journal = {Functional Analysis and Its Applications},
  volume = {23},
  number = {2},
  pages = {94--106},
  year = {1989},
  doi = {10.1007/BF01078777}
}

@article{Adolphson,
  author = {Adolphson, Alan},
  title = {Hypergeometric Functions and Rings Generated by Monomials},
  journal = {Duke Mathematical Journal},
  volume = {73},
  number = {2},
  pages = {269--290},
  year = {1994},
  doi = {10.1215/S0012-7094-94-07313-4}
}

@book{SST,
  author = {Saito, Mutsumi and Sturmfels, Bernd and Takayama, Nobuki},
  title = {Gr{\"o}bner Deformations of Hypergeometric Differential Equations},
  series = {Algorithms and Computation in Mathematics},
  volume = {6},
  publisher = {Springer},
  address = {Berlin},
  year = {2000},
  doi = {10.1007/978-3-662-04112-3}
}

@article{DMM,
  author = {Dickenstein, Alicia and Mart{\'i}nez, Federico N. and Matusevich, Laura Felicia},
  title = {Nilsson Solutions for Irregular {$A$}-Hypergeometric Systems},
  journal = {Revista Matem{\'a}tica Iberoamericana},
  volume = {28},
  number = {3},
  pages = {723--758},
  year = {2012},
  doi = {10.4171/RMI/689}
}

@article{SaitoLogFree,
  author = {Saito, Mutsumi},
  title = {Logarithm-Free {$A$}-Hypergeometric Series},
  journal = {Duke Mathematical Journal},
  volume = {115},
  number = {1},
  pages = {53--73},
  year = {2002},
  doi = {10.1215/S0012-7094-02-11512-9}
}

@article{Log1,
  author = {Saito, Mutsumi},
  title = {Logarithmic {$A$}-Hypergeometric Series},
  journal = {International Journal of Mathematics},
  volume = {31},
  number = {13},
  pages = {2050110},
  year = {2020},
  doi = {10.1142/S0129167X20501104}
}

@article{Log2,
  author = {Okuyama, Go and Saito, Mutsumi},
  title = {Logarithmic {$A$}-Hypergeometric Series {II}},
  journal = {Beitr{\"a}ge zur Algebra und Geometrie},
  volume = {64},
  number = {4},
  pages = {1057--1086},
  year = {2023},
  doi = {10.1007/s13366-022-00669-5}
}

@unpublished{Log3,
  author = {Okuyama, Go and Saito, Mutsumi},
  title = {Logarithmic {$A$}-Hypergeometric Series {III}},
  note = {Preprint, arXiv:2504.02501v2},
  year = {2026},
  eprint = {2504.02501v2},
  archivePrefix = {arXiv},
  primaryClass = {math.AG}
}

@unpublished{NagamineCore,
  author = {Nagamine, Mao},
  title = {{$A$}-Hypergeometric Series with Parameters in the Core},
  note = {Preprint, arXiv:2404.11085},
  year = {2024},
  eprint = {2404.11085},
  archivePrefix = {arXiv},
  primaryClass = {math.AG}
}

@unpublished{Nagamine,
  author = {Nagamine, Mao},
  title = {Hilbert Series and Logarithmic Degrees of {$A$}-Hypergeometric Series},
  note = {Preprint, arXiv:2608.01778},
  year = {2026},
  eprint = {2608.01778},
  archivePrefix = {arXiv},
  primaryClass = {math.AC}
}

@article{MMW,
  author = {Matusevich, Laura Felicia and Miller, Ezra and Walther, Uli},
  title = {Homological Methods for Hypergeometric Families},
  journal = {Journal of the American Mathematical Society},
  volume = {18},
  number = {4},
  pages = {919--941},
  year = {2005},
  doi = {10.1090/S0894-0347-05-00488-1}
}

@book{Eisenbud,
  author = {Eisenbud, David},
  title = {Commutative Algebra with a View Toward Algebraic Geometry},
  series = {Graduate Texts in Mathematics},
  volume = {150},
  publisher = {Springer-Verlag},
  address = {New York},
  year = {1995},
  doi = {10.1007/978-1-4612-5350-1}
}

@article{SchulzeTozzo,
  author = {Schulze, Mathias and Tozzo, Laura},
  title = {Inverse Limits of Macaulay's Inverse Systems},
  journal = {Journal of Algebra},
  volume = {525},
  pages = {341--358},
  year = {2019},
  doi = {10.1016/j.jalgebra.2019.01.024}
}

@unpublished{NakanoObstruction,
  author = {Nakano, Ryunosuke},
  title = {Obstructions to Intrinsic Perturbation for {$A$}-Hypergeometric Series},
  note = {Preprint, arXiv:2608.18006v2},
  year = {2026},
  eprint = {2608.18006v2},
  archivePrefix = {arXiv},
  primaryClass = {math.AG}
}

\end{document}